\definecolor{darkgreen}{rgb}{0.0,0.4,0.0}
\definecolor{darkred}{rgb}{0.6,0.0,0.0}
\definecolor{darkblue}{rgb}{0.0,0.0,0.5}
\definecolor{gray}{rgb}{0.5,0.5,0.5}
\definecolor{cyan}{rgb}{0.0,1.0,1.0}
\definecolor{darkcyan}{rgb}{0.0,0.5,0.5}
\definecolor{darkorange}{rgb}{0.8,0.4,0.0}
\definecolor{darkmargenta}{rgb}{0.5,0.0,0.5}
\definecolor{black}{rgb}{0.0,0.0,0.0}
\numberwithin{equation}{section}
\def \a  {\alpha}
\def \d  {\delta}
\def \e  {\varepsilon}
\def \om {\omega}
\def \Om {\Omega}
\def \P {\mathbb{P}}
\def \calA {\mathcal{A}}
\def \t  {\tau}
\def \del {\nabla}
\def \div {\nabla\cdot}
\def \p  {\partial}
\def \R  {\mathbb{R}}
\def \N  {\mathbb{N}}
\def \bmu {\bm{u}}
\def \bmv {\bm{v}}
\def \bmF {\bm{f}}
\def \glp {t_{n,\mu}}
\def \bmPh {\bm{\Phi}}
\def \bmPth {\bm{\Phi}_{\tau,h}}
\def \bmE {\bm{E}}
\def \bmw {\bm{w}}
\def \bmR {\bm{R}}
\def \bmEth {\bm{E}_{\tau,h}}
\newtheorem{theorem}{Theorem}[section]
\newtheorem{lemma}[theorem]{Lemma}
\newtheorem{definition}[theorem]{Definition}
\theoremstyle{definition}
\newtheorem{problem}{Problem}
\newtheorem{remark}{Remark}
\newcommand{\vertiii}[1]{{\left\vert\kern-0.17ex\left\vert\kern-0.17ex\left\vert #1 
		\right\vert\kern-0.17ex\right\vert\kern-0.17ex\right\vert}}
\newcommand{\norm}[1]{{\left\vert\kern-0.25ex\left\vert\kern-0.25ex\left\vert #1 
		\right\vert\kern-0.25ex\right\vert\kern-0.25ex\right\vert}}
\newsavebox{\@brx}
\newcommand{\llangle}[1][]{\savebox{\@brx}{\(\m@th{#1\langle}\)}%
	\mathopen{\copy\@brx\kern-0.5\wd\@brx\usebox{\@brx}}}
\newcommand{\rrangle}[1][]{\savebox{\@brx}{\(\m@th{#1\rangle}\)}%
	\mathclose{\copy\@brx\kern-0.5\wd\@brx\usebox{\@brx}}}
\newcolumntype{C}[1]{>{\centering\arraybackslash}p{#1}}
\renewcommand\nomgroup[1]{%
  \item[\bfseries
  \ifstrequal{#1}{A}{Physical quantities}{%
  \ifstrequal{#1}{N}{Space and time}{%
  \ifstrequal{#1}{B}{Function spaces}{%
  \ifstrequal{#1}{C}{Discretization}{%
  \ifstrequal{#1}{K}{Abbreviations}{}}}}}%
]}
\providecommand{\keywords}[1]
{
  \small	
  \textit{Keywords---} #1
}
\title{Convergence of a continuous Galerkin method for the  Biot--Allard poroelasticity system}
        \author[1]{Jakob S. Stokke}
		\author[2]{Markus Bause}
		\author[1]{Florin A. Radu}
\affil[1]{Center for Modeling of Coupled Subsurface Dynamics, Department of Mathematics, University of Bergen, Bergen, Norway}
\affil[2]{Helmut Schmidt University, Faculty of Mechanical and Civil Engineering, Hamburg, Germany}
\date{}
\begin{document}

		\maketitle

		\begin{abstract}
		We study a space-time finite element method for a system of poromechanics with memory effects that are modeled by a convolution integral. In the literature, the system is referred to as the Biot--Allard model. We recast the model as a first-order system in time, where the memory effects are transformed into an auxiliary differential equation. This allows for a computationally efficient numerical scheme. The system is discretized by continuous Galerkin methods in time and equal-order finite element methods in space. An optimal order error estimate is proved for the norm of the first-order energy of the unknowns of the system. The estimate is confirmed by numerical experiments. 
		\end{abstract}
		\keywords{Biot-Allard equations, Poroelasticity, Space-time finite element methods, Dynamic permeability}
		

	\section{Introduction}
	\label{sec:intro}

We consider a poroelastic system, which models the coupling of flow and deformation in a fully saturated porous medium and includes memory effects, referred to as the Biot--Allard model. The model is derived through asymptotic homogenization in the space and time variables by considering linear fluid–structure equations on the pore scale, see \cite[Eqs (3)-(7)]{mikelic2012}, resulting in the problem; Find the solid phase displacement $(\bmu)$ and the pressure $(p)$ such that	
	\begin{subequations}\label{eqs:BiotAllardconv}
		\begin{align}
			\rho\p_{t}^2\bmu-\div\bm{C}\e(\bmu)+\a\del p+\p_t\left(\calA\ast (\rho_f\bm{f}-\del p-\rho_f\p_t^2\bmu)\right)&=\rho\bm{f}\label{eq:biotallardmechanincs} \quad\mbox{ in }\Om\times(0,T], \\	
			c_0\p_tp+\div (\alpha\p_t\bmu)+\div \left(\calA\ast(\bmF-\frac{1}{\rho_f}\del p-\p_t^2\bmu)\right)&=0\label{eq:biotallardflow} \quad\quad\mbox{ in }\Om\times(0,T],
		\end{align}
		together with initial and boundary conditions
		\begin{align}\label{eq:conditions}
			\bmu(0)=\bmu_0,\quad\p_t\bmu(0)=\bmu_1, \quad p(0)&=p_0, \quad\mbox{ in }\Om,\quad
			\bmu=\bm{0}, \quad p=0,\quad\mbox{ on }\p\Om\times(0,T],
		\end{align}	
		where $\ast$ denotes the time convolution operator, i.e. $f(t)\ast g(t)=\int_0^{t}f(t-s)g(s)ds$, $\Om\subset\R^{d}, d\in\{2,3\}$ is a bounded domain with boundary $\p\Om$ and $T$ is the final time. Here, $\rho$ is the mass density, i.e. $\rho=\rho_f\varphi+\rho_s(1-\varphi)$ with $\rho_f$ being the fluid density, $\rho_s$ being the solid density and $\varphi$ the porosity. Further, $c_0$ is the specific storage capacity, $\a$ is the Biot coefficient, $\e(\bmu):=\frac12\left(\del\bmu+\del\bmu^{T}\right)$ is the linearized strain tensor and $\bm{C}$ is Gassmann's fourth order tensor. The dynamic permeability tensor is denoted by $\calA$, accounting for the delay memory effects due to the interaction between the fluid and the pore walls. For a general definition of the dynamic permeability, see \cite{johnson1987}.  
	\end{subequations}
 
	  There is a wide variety of applications of the poroelastic theory today, ranging from bio-mechanics and bio-medicine to subsurface engineering. For example, in bone remodeling, the activation of bone cells is closely related to memory effects due to fluid flow along cell bodies \cite{robling2006}. Another example is induced seismicity due to fluid injection. In this case, the delay effects in the Biot--Allard model can be viewed as a consequence of drag forces in the pores, which in large-scale experiments can lead to different onset times for the seismic event. It is worth noting that small-scale experimental results in \cite{batzle2001} indicate that memory effects due to drag forces are relevant for seismic wave propagation in rocks. In the case of a porous medium saturated with air, i.e., noise protection applications, the memory terms become important \cite{allard2009}. Therefore, the ability to simulate poroelastic systems is an important task from the perspective of physical realism.

	The poroelastic theory, often referred to as Biot theory due to the development of the field by M.A. Biot in a series of papers \cite{biot1941,biot1955,biot1956,biot1962}, has received a lot of attention because of the many above-mentioned relevant applications.  The well-posedness of Biot systems, including the existence, uniqueness, and regularity of solutions, has been considered by many \cite{auriault1977,showalter1974,zenisek1984,showalter2000,both2019}. Numerical analysis of the quasi-static Biot model, which can be interpreted as the singular limit of \eqref{eqs:BiotAllardconv} \cite{mikelic2012}, has been studied a lot over the last decades. One of the earlier stability and convergence results considered inf-sup stable finite elements with a backward Euler discretization of a two-field formulation of the Biot system \cite{murad1994}. However, optimal \textit{a priori} $L^{2}$ estimates have only recently been derived in \cite{wheeler2022}. For the three-field formulation, a stable conforming method was proposed in \cite{rodrigo2018}. Due to a large parameter variation, up to several orders of magnitude depending on the application, many works have been on designing robust preconditioners with respect to all parameters \cite{lee2017,hong2018}. More recently, using space-time finite elements, the convergence of the dynamic Biot system without memory effetcs was shown \cite{bause2024,kraus2024}.

 Computational efficiency has been a major aspect of solving the Biot system. In this respect, splitting schemes, i.e., schemes that sequentially solve the different subproblems, have become very popular. Two commonly used iterative splitting schemes are the fixed-stress and the undrained split. The idea behind them is to add a stabilization term to either the flow or the mechanics subproblem \cite{kim2011}. The convergence of these schemes for the quasi-static model was studied in \cite{mikelic2013,both2017,bause2017b,storvik2019}. The splitting schemes can also be used as preconditioners \cite{white2011}. For the dynamic Biot model, the undrained split has been proven to converge \cite{bause2021}, using the gradient flow technique \cite{both2019}. The iterative methods have also been expanded and studied for a soft poromechanical model \cite{both2022}. Furthermore, the fixed-stress approach has been extended to a simplified Biot--Allard model with convolution integrals \cite{stokke2024b}.

	In this paper, we seek to solve and rigorously show the convergence of a variational space-time finite element method for the Biot-Allard system. The main challenge is how to deal with the convolution integral. Our solution is to use a result from \cite{stokke2024} where they rewrite the convolution integral into an auxiliary differential equation (ADE) by using a series approximation of the dynamic permeability \cite{yvonne2014}. Note the difference in sign convention in the Fourier transform between these references. The main advantage now is that one avoids expensive and memory-demanding integration rules.  
	More specifically, for simplicity in the analysis, we consider the case when $N=1$ \cite[Eq.8]{stokke2024}, which corresponds exactly to assuming an exponential decay of the dynamic permeability: Find the solid phase displacement $(\bmu)$, the pressure $(p)$ and the memory variable $(\Psi)$ such that
	\begin{subequations}\label{eqs:biotallardADE}
		\begin{align}
			\rho\p_{t}^2\bmu-\div\bm{C}\e(\bmu)+\a\del p +\rho_f\p_t \Psi&=\rho\bmF,\quad&&\mbox{ in }\Om\times(0,T],\\
			c_0\p_tp+\div (\alpha\p_t\bmu)+\div \Psi&=0,\quad &&\mbox{ in }\Om\times(0,T],\\
			c_1\rho_f\p_t\Psi +\rho_f\Psi+\frac{d_1\eta_k}{F}\del p+\frac{d_1\eta_k}{F}\rho_f\p_t^{2}\bmu&=\frac{d_1\eta_k}{F}\rho_f\bmF,\quad &&\mbox{ in }\Om\times(0,T],
		\end{align}
		with initial and boundary conditions
		\begin{align}
			\bmu(0)=\bmu_0,\quad\p_t\bmu(0)=\bmu_1, \quad p(0)=p_0,\quad\Psi(0)=\Psi_0, \quad&\mbox{ in }\Om,
			\\
			\bmu=\bm{0}, \quad p=0,\quad \Psi=0,\quad&\mbox{ on }\p\Om\times(0,T].
		\end{align}	
	\end{subequations}
 Here, $\eta_k$ is the kinetic viscosity, $F$ the formation factor, and $c_1,\, d_1>0$ are constants that can be estimated through a numerical procedure detailed in \cite{yvonne2014}. Homogeneous Dirichlet boundary conditions are employed solely to simplify the error analysis. We note that in the case of wave propagation due to a seismic event, absorbing boundary conditions should be used (see, e.g., \cite{clayton1977}). The well-posedness of \eqref{eqs:biotallardADE} is studied in \cite{stokke2024}.  For the system \eqref{eqs:biotallardADE}, we will consider space-time finite element methods based on a variational formulation in both time and space. Before discretizing in time, we will rewrite the system as a first-order system in time by defining $\bmv:=\p_t\bmu$. In time, we will use continuous Galerkin methods since they ensure energy conservation for second-order wave equations \cite{bause2020}. For the spatial discretization, we will use an equal-order approximation. The novelty of the resulting numerical scheme is that we have replaced the convolution integral with an ADE that incorporates memory effects. The analysis of the scheme uses an energy quantity of the system \eqref{eqs:biotallardADE}. This is necessary to control the errors arising from the coupling terms and is further motivated by the stability estimate \eqref{eq:errorstabilityestimate} of the continuous solution in the same combined norm. Therefore, separating the errors in the unknowns is not feasible in our analysis. The energy analysis does not depend on the inf-sup stability condition, which allows us to use an equal-order approximation.  
 
 We prove error estimates in a natural energy norm for the system \eqref{eqs:biotallardADE}. Precisely, we show that the approximations $(\bmu_{\t,h},\bmv_{\t,h},p_{\t,h},\Psi_{\t,h})$ of temporal order $k\geq 1$ and spatial order $r\geq 1$ satisfy
 \begin{align}\label{eq:introerror}
			\max_{t\in [0,T]}\left\{\|\del(\bmu(t)-\bmu_{\t,h}(t))\|+\left\|\bm{N}^{\frac12}\begin{pmatrix}
					\bmv(t)-\bmv_{\t,h}(t)\\\Psi(t)-\Psi_{\t,h}(t)
				\end{pmatrix}\right\|+c_0\|p(t)-p_{\t,h}(t)\|\right\}\leq c\left(\t^{k+1}+h^{r}\right),
		\end{align}
 where $\bm{N}$ is defined in \eqref{eq:Nmatrix} and $\| \cdot \|$ stands for the $L^2$-norm.

 To show the convergence of the scheme, we build on the work of \cite{bause2024}, utilizing key ideas from \cite{makridakis1998,makridakis2005}.
The analysis in the next sections follows several steps, which we describe briefly here. In \Cref{sec:error}, we have included a schematic of the proof; see \Cref{fig:schematic}. 
 \begin{itemize}
     \item We derive variational equations for the discrete errors (\Cref{lemma:varerror}) and identify which terms on the right-hand side need to be estimated when they are summed together (\Cref{lemma:estimation}).
     \item We then derive stability estimates for the discrete errors (\Cref{theorem:stability}), where the key idea is choosing the correct test functions to gain control of the errors.
     \item Next, we want to estimate additional terms of the right-hand side in the stability estimate (\Cref{lemma:newname}). The central idea for this proof is similar to \cite{makridakis1998,makridakis2005}, where we utilize a representation of the discrete errors in terms of Gauss nodes at each time interval $I_n$. We can then control the norm of the error representation by \cite[Lem. 2.1]{makridakis2005}.
     \item Then, we relate the jump in the discrete errors at the endpoints of two adjacent subintervals in time (\Cref{lemma:estimation2}).
     \item By estimating additional terms resulting from partial integration in time when deriving the stability estimate, we can combine all the results. The proof is then concluded by a discrete Gronwall argument.
 \end{itemize}

 We also conjecture and show numerically that for the non-equal order inf-sup stable elements, where $p$ is of order $r\geq 1$ and $\bmu,\bmv$ and $\Psi$ are of order $r+1$ in space while still being order $k\geq 1$ in time, that the approximations $(\bmu_{\t,h},\bmv_{\t,h},p_{\t,h},\Psi_{\t,h})$ satisfy
 \begin{align}\label{eq:conjecture}
			\max_{t\in [0,T]}\left\{\|\del(\bmu(t)-\bmu_{\t,h}(t))\|+\left\|\bm{N}^{\frac12}\begin{pmatrix}
					\bmv(t)-\bmv_{\t,h}(t)\\\Psi(t)-\Psi_{\t,h}(t)
				\end{pmatrix}\right\|+c_0\|p(t)-p_{\t,h}(t)\|\right\}\leq c\left(\t^{k+1}+h^{r+1}\right).
		\end{align}

        The paper is organized in the following manner. In \Cref{sec:prelim}, we introduce the notations and some auxiliary results. In \Cref{sec:disc} we present the numerical scheme and in \Cref{sec:error} we prove its convergence \eqref{eq:introerror}. Finally, numerical experiments are presented in \Cref{sec:numerics}.
	\section{Preliminaries}\label{sec:prelim}
	In this paper, we use standard functional analysis notation. By $L^{2}(\Om)$, we denote the space of Lebesgue measurable and square-integrable functions defined on $\Omega$. We let $H^{m}(\Om)$ be the Sobolev space of $L^{2}(\Om)$ functions with weak derivatives up to order $m$ in $L^{2}(\Om)$ and $H^{1}_{0}(\Om):=\left\{f\in H^{1}(\Om)\,\, |f=0 \mbox{ on }\p\Om\right\}$. The dual space of $H^{1}_{0}(\Om)$ is denoted $H^{-1}(\Om)$. Throughout the paper we skip the domain $\Om$ to shorten notation, i.e. $L^{2}:=L^{2}(\Om), H^{m}:=H^{m}(\Om), H^{1}_{0}:=H^{1}_{0}(\Om)$ and $H^{-1}:=H^{-1}(\Om)$. In the case of vector-valued functions, the spaces are written in bold, e.g., $\bm{L}^{2}=L^{2}(\Om)^{d}$. The associated norms of the Sobolev spaces are denoted by $\|\cdot\|=\|\cdot\|_{L^{2}}, \|\cdot\|_{m}=\|\cdot\|_{H^{m}}$ for $m\in\N_0$. We define an $L^{2}$ inner product on the product space $\bm{L}^{2}\times\bm{L}^{2}$ by
	\begin{align*}
		\llangle\bm{v},\bm{w}\rrangle=\langle \bmv^{1},\bmw^{1}\rangle_{\bm{L}^{2}}+\langle \bmv^{2},\bmw^{2}\rangle_{\bm{L}^{2}},\quad\mbox{ for } \bmv,\bmw\in\bm{L}^{2}\times\bm{L}^{2}\mbox{ where }\bmv=(\bmv^{1},\bmv^{2}),\,\bmw=(\bmw^{1},\bmw^{2}).
	\end{align*}  Given a Banach space $B$, we define the Bochner spaces of $B$-valued functions
    \begin{align*}
        C([0,T];B)&:= \left\{f:[0,T]\to B\,\, | \,\,f\mbox{ is continuous}\right\},
        \\
        L^{2}((0,T);B)&:=\left\{f:(0,T)\to B \,\,\Big|\,\, \int_{0}^{T}\|f(t)\|^{2}_{B}\,dt<\infty\right\},
    \end{align*} equipped with their natural norms (cf. \cite{ern2004}). When considering a sub-interval $J$ of $[0,T]$, i.e. $J\subset[0,T]$, we will use the following notation for the Bochner spaces $L^{2}(J;B)$ and $C(J;B)$. We note that the constants $c$, which appears in the paper, is generic and does not depend on the size of the spatial and time meshes. It can depend on the norms of the solution, the regularity of the spatial mesh, the polynomial degrees used for the space-time discretization, and the data.

	\subsection{Finite element spaces}
	For the discretization in time, we split the time interval $I=(0, T]$ into $N$ subintervals $I_{n}=(t_{n-1},t_{n}]$, with  $n\in\{1,...,N\}$, where $0=t_{0}<t_{1}<\cdots<t_{N-1}<t_{N}=T$ such that $I=\cup_{n=1}^{N}I_{n}$. By $\t_n$ we denote the time step size of the interval $I_{n}$, i.e. $\t_n=t_{n}-t_{n-1}$, and the maximum time step size by $\t=\max_{n=1,...,N}\{\t_n\}$.
	For a Banach space $B$ and any $k\in\mathbb{N}_{0}$, we define the space
	\begin{align}\label{space:PK}
		\mathbb{P}_{k}(I_n;B):= \left\{w_\t:I_n\to B, w_{\t}(t)=\sum_{j=0}^{k}W^{j}t^{j},\,\forall t\in I_{n}, W^{j}\in B \,\forall\, j\right\}.
	\end{align}
	For an integer $k\in\mathbb{N}$ and $l\in\mathbb{N}_{0}$ we define the spaces of piecewise polynomials of order $k$ and $l$ in time 
	\begin{align}\label{space:globalcontTIME}
		X_{\t}^{k}(B)&:=\left\{w_{\t}\in C(\bar{I};B)\,|\,w_{\t|I_{n}}\in\P_k(I_n;B),\,\forall \,n\in\{1,...,N\}\right\},\\
        Y_{\t}^{l}(B)&:=\left\{w_{\t}\in L^{2}(I;B)\,|\,w_{\t|I_{n}}\in\P_l(I_n;B),\,\forall n\in\{1,...,N\}\right\},
	\end{align}
	i.e., the space of globally continuous in time functions and global $L^{2}$ functions in time.
	  We also define the one-sided limits of a piecewise continuous function $w: I\to B$ with respect to the time mesh $\{I_1,...,I_n\}$ by
	\begin{align}\label{eq:limitdef}
		w(t_n^{+}):=\lim_{t\to t_n+0},\mbox{ for }n<N, \mbox{ and }\quad w(t_n):=\lim_{t\to t_n-0},\mbox{ for } n>0.
	\end{align}

	In space, we consider a shape-regular mesh $\mathcal{T}_h$ with mesh size $h$ of the domain $\Om$.  Then for any $r\in\N$, we define the finite element space $V_h^{r}$ built on simplexes $K$ by 
	\begin{align}
		V^{r}_{h}:=\left\{v_h\in C(\widebar{\Om}) \,|\,v_{h|k}\in\mathbb{P}_r(K)\,\forall \, K\in\mathcal{T}_h\right\}\cap H_{0}^{1}(\Om),
	\end{align}
	where $\P_{r}(K)$ is the space defined by the reference mapping of polynomials on the reference element with maximum degree $r$ in each variable. For vector-valued functions, we write the space $V_h^r$ in bold. 
	
	\subsection{Auxilliaries}
	We will consider the $(k+1)$-point Gauss-Lobatto quadrature rule for the continuous in-time finite element method since it preserves the continuity at both ends of the interval $I_n$. We define an affine transformation from the reference interval $\hat{I}=[-1,1]$ to $\bar{I}_n$ by
    \begin{align*}
		T_{n}(\hat{t}):=\frac{t_{n-1}+t_{n}}{2}+\frac{\t_{n}}{2}\hat{t},
	\end{align*}
    and we let $\hat{t}_{\mu}$ and $\hat{t}_{\mu}^{G}$ for $\mu=0,1,...,k$ be the Gauss-Lobatto and Gauss quadrature points on the reference interval $\hat{I}$ respectively.
    Then the $(k+1)$-point Gauss-Lobatto quadrature  formula on each time interval $I_n$ is
	\begin{align}\label{Gauss-Lobatto}
		Q_{n}(w):=\frac{\t}{2}\sum_{\mu=0}^{k}\hat{\om}_{\mu}w_{|I_n}(t_{n,\mu})\approx\int_{I_n}w(t)dt,
	\end{align}
	where $t_{n,\mu}=T_n({\hat{t}_{\mu}})$, for $\mu=0,1,...,k$ are the quadrature points on $\bar{I}_{n}$ and $\hat{\om}_{\mu}$ is the corresponding weights.  Also the $(k)$-point Gauss quadrature formula on each time interval $I_n$ is 
	\begin{align}\label{gaussquadratureInKthpoint}
		Q_{n}^{G}:=\frac{\t}{2}\sum_{\mu=1}^{k}\hat{\om}_{\mu}^{G}w(t_{n,\mu}^{G})\approx \int_{I_n}w(t)\,dt,
	\end{align}
	where $\glp^{G}=T_{n}(\hat{t}_{\mu}^{G})$ for $\mu=1,...,k$, are the Gauss quadrature points on $I_n$ and $\hat{\om}_{\mu}^{G}$ the corresponding weights.
	Note that for all polynomials in $\P_{2k-1}(I_n;\R)$, the quadrature formulas \eqref{Gauss-Lobatto} and \eqref{gaussquadratureInKthpoint} are exact.
    
    In the analysis, we are going to need a temporal interpolation operator. We define the global Lagrange interpolation operator $I_{\t}:C^{0}(\bar{I};L^{2})\mapsto X^{k}_{\t}(L^{2})$ for the Gauss-Lobatto quadrature points $\glp$ as
	\begin{align}\label{operator:globalLagrangeinterpolation}
		I_{\t}w(\glp)=w(\glp), \mbox{ for }\,\mu=0,....,k,\,\,n=1,...,N.
	\end{align}
    We also define the local interpolant $I_{\t,n}^{G}:C^{0}(\bar{I}_n;L^{2})\mapsto\P_{k-1}(\bar{I}_n;L^{2})$
	  for $n=1,...,N$, by 
	\begin{align}\label{localinterpolant}
		I^{G}_{\t,n}w(\glp^{G})=w(\glp^{G}),\,\,\mu=1,...,k.
	\end{align}
	Further, we define the interpolate $\Pi_{\t}^{k-1}w\in Y^{k-1}_{\t}(L^{2})$ for a given function $w\in L^{2}(I;L^{2})$ such that the restriction of $\Pi_{\t}^{k-1}w$ to $I_n$, i.e. $\Pi_{\t}^{k-1}w_{|I_n}\in\P_{k-1}(I_n;L^{2}),\,$ for $n=1,...,N$, is determined by the local $L^{2}$-projection in time,
	\begin{align}
		\int_{I_n}\langle\Pi_{\t}^{k-1}w,q\rangle \,dt=\int_{I_n}\langle w,q\rangle \,dt\quad \forall\, q\in\P_{k-1}(I_n;L^{2}). 
	\end{align}

	\begin{lemma}\label{lemma:21}
		Consider the Gauss quadrature formula \eqref{gaussquadratureInKthpoint}. For all polynomials $w\in\P_{k}(I_n;L^{2})$ it holds that
		\begin{subequations}\label{eqs:lemma2_1}
			\begin{align}
				\Pi_{\t}^{k-1}w(t)&=I_{\t,n}^{G}w(t),\quad \mbox{for }t\in I_n, \,n=1,...,N,\\
				\Pi_{\t}^{k-1}w(\glp^{G})&=w(\glp^{G}),\quad \,\,\mbox{for }\mu=1,...,k,\,n=1,...,N.
			\end{align}
		\end{subequations}
	\end{lemma}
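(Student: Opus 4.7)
The plan is to reduce both statements to a standard uniqueness argument for the $L^2$-projection. Both $\Pi_\tau^{k-1}w$ and $I_{\tau,n}^G w$ lie in $\P_{k-1}(I_n;L^2)$, and $\Pi_\tau^{k-1}w$ is the unique element of this space satisfying the orthogonality defining relation. So it is enough to verify that $I_{\tau,n}^G w$ satisfies that same relation, i.e.\ that
\begin{equation*}
\int_{I_n}\langle w - I_{\tau,n}^G w,\, q\rangle\,dt = 0 \qquad \forall\, q\in\P_{k-1}(I_n;L^2).
\end{equation*}
Once (a) is established, claim (b) follows immediately: evaluating (a) at the Gauss nodes gives $\Pi_\tau^{k-1}w(\glp^G)=I_{\tau,n}^G w(\glp^G)=w(\glp^G)$ by the defining property \eqref{localinterpolant} of $I_{\tau,n}^G$.

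To establish the orthogonality, I would analyze the residual $r(t):=w(t)-I_{\tau,n}^G w(t)$. Since $w\in\P_k(I_n;L^2)$ and $I_{\tau,n}^G w\in\P_{k-1}(I_n;L^2)$, we have $r\in\P_k(I_n;L^2)$, and by construction $r$ vanishes at each of the $k$ Gauss nodes $t_{n,\mu}^G$. Because the nodes depend only on $t$, we can factor $r(t)=L_k(t)\,c$ where $L_k\in\P_k(I_n;\R)$ is the (scalar) polynomial of degree $k$ vanishing precisely at $\{t_{n,\mu}^G\}_{\mu=1}^k$ and $c\in L^2$ is the corresponding leading coefficient.

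Now for any $q\in\P_{k-1}(I_n;L^2)$, the integrand becomes $\langle r(t),q(t)\rangle = L_k(t)\,\langle c, q(t)\rangle$, and $t\mapsto\langle c,q(t)\rangle$ is a scalar polynomial of degree at most $k-1$. The integral $\int_{I_n} L_k(t)\,\langle c,q(t)\rangle\,dt$ then vanishes either by the classical orthogonality of the Gauss-node polynomial to $\P_{k-1}(I_n;\R)$, or equivalently by noting that the integrand lies in $\P_{2k-1}$ so the $k$-point Gauss rule \eqref{gaussquadratureInKthpoint} is exact, and the rule evaluates to zero since $L_k$ annihilates every quadrature node. This yields the desired orthogonality, hence (a), and then (b).

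The argument is essentially a translation of a standard Gauss-quadrature / Legendre-orthogonality fact to the $L^2$-valued setting, so I do not expect a real obstacle; the only point that demands a little care is the separation of the temporal polynomial structure (where the Legendre orthogonality lives) from the $L^2$ inner product in space, which is precisely what the factorization $r(t)=L_k(t)\,c$ accomplishes.
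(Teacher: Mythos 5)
Your proposal is correct. Note that the paper does not actually prove this lemma; it delegates it to \cite[Eq.\ (2.6)]{makridakis2005} and \cite[Lemma 4.5]{bause2020}, and the argument given there is exactly the one you reconstruct: $I_{\t,n}^{G}w\in\P_{k-1}(I_n;L^{2})$ satisfies the defining orthogonality relation of the local $L^{2}$-projection because the integrand is a scalar polynomial of degree at most $2k-1$ in $t$, on which the $k$-point Gauss rule is exact, and the quadrature sum vanishes by the interpolation property at the Gauss nodes; uniqueness of the projection then gives (a), and (b) follows by evaluating at $\glp^{G}$. One small remark: the factorization $r(t)=L_k(t)\,c$ is valid for $L^{2}$-valued polynomials (apply any bounded linear functional and compare leading coefficients), but it is an unnecessary detour — the quantity $\langle w(t)-I_{\t,n}^{G}w(t),q(t)\rangle$ already lies in $\P_{2k-1}(I_n;\R)$ by bilinearity, so the exactness-plus-node-annihilation argument applies to it directly without separating the temporal and spatial structure.
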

	\begin{proof}
		See \cite[Eq.\ (2.6)]{makridakis2005} and \cite[Lemma\ 4.5]{bause2020}.
	\end{proof}
	We recall the following $L^{\infty}-L^{2}$ relation from \cite[Eq. 2.5]{makridakis2005}.
	\begin{lemma}
		For all $n=1,...,N$ it holds that
		\begin{align}\label{eq:LInfinityLTwo}
			\|w\|_{L^{\infty}(I_n;\R)}\leq c\t^{-\frac{1}{2}}\|w\|_{L^{2}(I_n;\R)},\quad \mbox{for all }w\in\P_{k}(I_n;L^{2}).
		\end{align}
	\end{lemma}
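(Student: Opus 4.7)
The plan is to reduce the bound to a one-dimensional inverse inequality on the reference interval $\hat{I}=[-1,1]$ via the affine change of variables $T_n$ already introduced. First I would set $\hat{w}(\hat{t}) := w(T_n(\hat{t}))$, which lies in $\P_k(\hat{I};L^{2})$ since $T_n$ is affine. A routine change of variables with $dt = (\tau_n/2)\, d\hat{t}$ yields the scaling identities
$$\|w\|_{L^{\infty}(I_n)} = \|\hat{w}\|_{L^{\infty}(\hat{I})}, \qquad \|w\|_{L^{2}(I_n)}^{2} = \frac{\tau_n}{2}\, \|\hat{w}\|_{L^{2}(\hat{I})}^{2}.$$

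Next I would exploit that, pointwise in $x\in\Om$, the function $\hat{t}\mapsto \hat{w}(\hat{t})(x)$ is a scalar polynomial of degree at most $k$. On the finite-dimensional space $\P_k(\hat{I};\R)$ all norms are equivalent, so there is a constant $C_k$, depending only on $k$, with
$$\sup_{\hat{t}\in\hat{I}} |\hat{w}(\hat{t})(x)|^{2} \leq C_k^{2} \int_{\hat{I}} |\hat{w}(\hat{t})(x)|^{2}\, d\hat{t}, \qquad \text{for every } x\in\Om.$$
Integrating this pointwise bound over $\Om$ and using that $\sup_{\hat{t}}\int_{\Om}|\hat{w}|^{2}\,dx \leq \int_{\Om}\sup_{\hat{t}}|\hat{w}|^{2}\,dx$ yields the reference-interval estimate $\|\hat{w}\|_{L^{\infty}(\hat{I};L^{2})} \leq C_k\,\|\hat{w}\|_{L^{2}(\hat{I};L^{2})}$. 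Combining with the scaling identities above gives $\|w\|_{L^{\infty}(I_n)} \leq C_k\sqrt{2/\tau_n}\,\|w\|_{L^{2}(I_n)}$, and the stated bound follows once one uses the (implicit) quasi-uniformity of the time mesh to replace $\tau_n^{-1/2}$ by $c\,\tau^{-1/2}$.

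There is no real obstacle here: the whole argument is a standard inverse inequality for polynomials on a one-dimensional interval, lifted to $L^{2}$-valued functions. The only care required is in the Fubini-type step exchanging the pointwise-in-$x$ supremum in $\hat{t}$ with the $L^{2}$-integral over $\Om$, which is handled cleanly by first squaring the scalar pointwise estimate and then integrating in $x$ as indicated.
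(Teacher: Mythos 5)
Your proof is correct. The paper does not actually prove this lemma; it simply recalls it from \cite[Eq.\ 2.5]{makridakis2005}, and your argument is the standard one that underlies that reference: affine rescaling to the reference interval $\hat{I}$, equivalence of the $L^{\infty}$ and $L^{2}$ norms on the finite-dimensional space $\mathbb{P}_k(\hat{I};\mathbb{R})$ applied pointwise in $x$, and the elementary exchange $\sup_{\hat t}\int_{\Omega}\le\int_{\Omega}\sup_{\hat t}$ to lift the scalar estimate to $L^{2}$-valued polynomials. You also correctly identify the one genuine subtlety in the statement as printed: the natural constant is $\tau_n^{-1/2}$, and since $\tau=\max_n\tau_n$ one has $\tau^{-1/2}\le\tau_n^{-1/2}$, so passing to $c\,\tau^{-1/2}$ on the right-hand side strengthens the claim and does require quasi-uniformity of the time mesh (or one should simply read $\tau_n$ in place of $\tau$, consistent with how the estimate is used locally on each $I_n$ elsewhere in the paper).
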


	\section{Discretization}\label{sec:disc}

    \subsection{Continuous system and stability estimates}
    
	For the numerical approximation, we recast the system \eqref{eqs:biotallardADE} as a first-order system in time. For this, we define $\bmv:=\p_t\bmu$ and obtain the model
	\begin{subequations}\label{eqs:firstorderADE}
		\begin{align}
			\p_t\bmu-\bmv&=0,\quad&&\mbox{ in }\Om\times(0,T],\\
			\rho\p_t\bmv -\div\bm{C}\e(\bmu) +\a \del p+\rho_f\p_t \Psi&=\rho\bmF,\quad&&\mbox{ in }\Om\times(0,T],\\
			c_0\p_t p +\a \div \bm{v}+\div\Psi&=0,\quad&&\mbox{ in }\Om\times(0,T],\\
			c_1\rho_f F\p_t\Psi +\rho_f F\Psi+d_1\eta_k\del p+\rho_fd_1\eta_k\p_t\bmv&=\rho_fd_1\eta_k\bmF,\quad&&\mbox{ in }\Om\times(0,T],\\
			\bmu(0)=\bmu_0,\quad\bmv(0)=\bmv_0, \quad p(0)=p_0,\quad\Psi(0)&=\Psi_0, \quad&&\mbox{ in }\Om,
			\\
			\bmu=\bm{0},\quad\bmv=\bm{0}, \quad p=0, \quad \Psi &= 0\quad&&\mbox{ on }\p\Om\times(0,T].
		\end{align}
	\end{subequations}
    The existence of a sufficiently smooth solution to \eqref{eqs:firstorderADE} such that all of the below-given estimates are well-defined is assumed here. For this, the data are supposed to be sufficiently smooth as well. The corresponding weak formulation reads as follows:  Find $(\bmu,\bmv, p, \Psi)\in \left(L^{2}(I;\bm{H}^{1}_{0})\times L^{2}(I;\bm{H}^{1}_{0})\times L^{2}(I;H^{1}_{0})\times L^{2}(I;\bm{H}^{1}_{0})\right)$ such that
    \begin{subequations}\label{eqs:WeakformfirstorderADE}
		\begin{align}
			\int_0^{T}\langle\p_t\bmu,\bmPh^{U}\rangle -\langle\bmv,\bmPh^{U}\rangle\, dt&=0,\label{eq:appUV}\\
			\int_0^{T}\rho\langle\p_t\bmv,\bmPh^{V}\rangle +\langle\bm{C}\e(\bmu),\e(\bmPh^{V})\rangle +\a \langle\del p,\bmPh^{V}\rangle+\rho_f\langle\p_t \Psi,\bmPh^{V}\rangle\, dt&=\int_0^{T}\rho\langle \bmF,\bmPh^{V}\rangle \,dt,\\
			\int_0^{T}c_0\langle\p_t p,\Phi^{P}\rangle +\a \langle\div \bm{v},\Phi^{P}\rangle+\langle\div\Psi,\Phi^{P}\rangle\, dt&=0,\label{eq: appPressure}\\
			\int_0^{T} c_1\rho_f F\langle\p_t\Psi,\bmPh^{\Psi}\rangle +\rho_f F\langle\Psi,\bmPh^{\Psi}\rangle+d_1\eta_k\langle\del p,\bmPh^{\Psi}\rangle+\rho_fd_1\eta_k\langle\p_t\bmv,\bmPh^{\Psi}\rangle \,dt&=\int_0^{T}\rho_fd_1\eta_k\langle \bmF,\bmPh^{\Psi}\rangle\, dt,
		\end{align}
        for $(\bmPh^{U},\bmPh^{V},\Phi^{P},\bmPh^{\Psi})\in \left(L^{2}(I;\bm{H}^{1}_{0})\times L^{2}(I;\bm{H}^{1}_{0})\times L^{2}(I;H^{1}_{0})\times L^{2}(I;\bm{H}^{1}_{0})\right)$.
	\end{subequations}

    Next, we still derive a stability estimate for the solution of \eqref{eqs:WeakformfirstorderADE}. From this result, we identify the first-order energy of the system's unknowns $(\bmu,\bmv, p, \Psi)$. The stability result will then induce the norm for our error control that is based on a corresponding estimation of the discrete error between an interpolation of the continuous solution and its fully discrete approximation.
    
    For simplicity, we derive the stability estimate assuming that the source terms $\bmF\equiv 0$. In \eqref{eqs:WeakformfirstorderADE}, we choose the test functions $\bmPh^{U}=\p_t\bmu$, $\bmPh^{V}=\bmv$, $\Phi^{P}=p$ and $\bmPh^{\Psi}=\frac{1}{d_1\eta_k}\Psi$ based on seeking to cancel out the coupled terms.  First note that \eqref{eq:appUV} disappears, and when we add the equations in \eqref{eqs:WeakformfirstorderADE} together, the coupling terms arising from $\a\del p$ and $\a\div\bmv$ cancel each other out. The same happens for the coupling terms between the auxiliary and pressure equations. Then, we are left with two more coupling terms
    $$\int_0^{T}\rho_f\langle\p_t\bmv,\Psi\rangle \,dt\quad\mbox{ and }\quad\int_{0}^{T}\rho_f\langle\p_t \Psi,\bmv\rangle \,dt.$$
To control these terms, we derive an estimate for the velocity and the memory variable simultaneously. Because of our chosen test functions, we define the matrix
\begin{align}\label{eq:Nmatrix}
		\bm{N}:= \begin{pmatrix}
			\rho&\rho_f\\
			\rho_f&\rho_fc_1 F\eta_k^{-1}d_1^{-1}\
		\end{pmatrix},
	\end{align}
and write the combined weak form as
\begin{align*}
			\int_0^{T}\llangle[\bigg]\bm{N}\begin{pmatrix}
			    \p_t\bmv\\
                \p_t\Psi
			\end{pmatrix},\begin{pmatrix}
			    \bmv\\
                \Psi
			\end{pmatrix}\rrangle[\bigg]+\langle\bm{C}\e(\bmu),\e(\bmv)\rangle+c_0\langle\p_t p,p\rangle +
			\frac{\rho_f F}{d_1\eta_k}\|\Psi\|^{2}_{\bm{L}^{2}} \,dt =0.
	\end{align*}
    This then results in the stability estimate
    \begin{equation}
\begin{aligned}\label{eq:errorstabilityestimate}
			\frac{1}{2}\llangle[\bigg]\bm{N}\begin{pmatrix}
			    \bmv(T)\\
                \Psi(T)
			\end{pmatrix},\begin{pmatrix}
			    \bmv(T)\\
                \Psi(T)
			\end{pmatrix}\rrangle[\bigg]+\frac{1}{2}\langle \textbf{C}\e(\bmu(T)),\e(\bmu(T))\rangle+\frac{c_0}{2}\langle p(T),p(T)\rangle \\\leq
            \frac{1}{2}\llangle[\bigg]\bm{N}\begin{pmatrix}
			    \bmv_0\\
                \Psi_0
			\end{pmatrix},\begin{pmatrix}
			    \bmv_0\\
                \Psi_0
			\end{pmatrix}\rrangle[\bigg]+\frac{1}{2}\langle \textbf{C}\e(\bmu_0),\e(\bmu_0)\rangle+\frac{c_0}{2}\langle p_0,p_0\rangle.
	\end{aligned}
    \end{equation}
    Since the solution of the weak form at a final time $T$ is bounded by the initial conditions in this combined energy quantity we have chosen to analyze the convergence of the numerical scheme with respect to this quantity. 
	\subsection{Bilinear forms and discrete operators}
	Here, we introduce the bilinear forms for the discrete variational formulation and more operators related to the spatial discretization.
	For $\bmu,\bmv,\bm{\phi}\in\bm{H}_0^{1},p,\psi\in H^{1}_{0}$ and $\bm{f}\in H^{-1}$ we define 
	\begin{align*}
		A(\bmu,\bm{\phi}):=\langle\textbf{C}\e(\bmu),\e(\bm{\phi})\rangle,\quad C(\bmv,\psi):=-\a\langle\div\bmv,\psi\rangle,\quad G_1(\bm{\phi}):=\langle\rho\bm{f},\bm{\phi}\rangle, \quad    G_2(\bm{\phi}):=\langle\rho_f d_1\eta_k\bm{f},\bm{\phi}\rangle. 
	\end{align*}
	We denote the $\bm{L}^{2}$-orthogonal projection onto $\bm{V}^{r}_{h}$ by $\bm{P}_{h}:\bm{L}^{2}\mapsto\bm{V}^{r}_{h}$ such that for $\bmw\in\bm{L}^{2}$ we have $\langle\bm{P}_{h}\bmw,\bm{\phi}_{h}\rangle=\langle\bmw,\bm{\phi}_{h}\rangle$ for all $\bm{\phi}_{h}\in\bm{V}^{r}_{h}$. The operator $\bm{R}_{h}:\bm{H}_{0}^{1}\mapsto\bm{V}^{r}_{h}$ defines the elliptic projection onto $\bm{V}_{h}^{r}$ such that 
	\begin{align*}
		\langle \textbf{C}\e(\bm{R}_{h}\bmw)-\textbf{C}\e(\bmw),\e(\bm{\phi}_{h})\rangle=	0, \quad \mbox{for }\bmw\in H^{1}_{0},\,\bm{\phi}_{h}\in\bm{V}^{r}_{h}.
	\end{align*} 
	Let the relation $\langle\bm{A}_{h}\bmw,\bm{\phi}_{h}\rangle=A(\bmw,\bm{\phi})$ hold for the discrete operator $\bm{A}_{h}:\bm{H}_0^{1}\mapsto\bm{V}^{r}_{h}$ for all $\bm{\phi}_{h}\in\bm{V}^{r}_{h}$. It follows that for $\bmw\in\bm{H}_0^{1}\cap\bm{H}^{2}$
	\begin{align*}
		\langle\bm{A}_{h}\bmw,\bm{\phi}_{h}\rangle=\langle \textbf{C}\e(\bmw),\e(\bm{\phi}_{h})\rangle=\langle\bm{A}\bmw,\bm{\phi}_{h}\rangle,\,\mbox{for }\bm{\phi}\in\bm{V}^{r}_{h},\, \mbox{where }\bm{A}:\bm{H}_{0}^{1}\mapsto\bm{H}^{-1} \mbox{ with }\langle\bm{A}\bmw,\bm{\phi}\rangle:=A(\bmw,\bm{\phi}),\,\mbox{for }\bm{\phi}\in\bm{H}_0^{1}.
	\end{align*}
	Therefore $\bm{A}_{h}\bmw=\bm{P}_{h}A\bmw$ for $\bmw\in\bm{H}^{1}_{0}\cap\bm{H}^{2}$. By $R_h$ we denote the elliptic projection onto $V^{r}_h$ such that 
    \begin{equation*}
        \langle \del R_h p-\del p, \varphi_h\rangle = 0, \quad\forall \varphi_h\in V_h^{r}.
    \end{equation*}

    \subsection{Continuous Galerkin discretization}
	In space, we use a continuous finite element method based on equal-order elements, and in time we use the continuous Galerkin method (see e.g. \cite{ern2004,brenner2008}). This gives the following discrete problem on the subinterval $I_n$ of \eqref{eqs:firstorderADE},
    given that solutions have been computed at $t_{n-1}$.
	\begin{problem}[Quadrature form of $I_n$ problem]\label{prob:quadratureform}
		Let $k,r\geq 1$. For given $\bmu^{n-1}_{\t,h}:=\bmu_{\t,h}(t_{n-1})\in\bm{V}_{h}^{r}, \bmv_{\t,h}^{n-1}:=\bmv_{\t,h}(t_{n-1})\in\bm{V}_{h}^{r}, p_{\t,h}^{n-1}:=p_{t,h}(t_{n-1})\in V_{h}^{r},\Psi_{\t,h}^{n-1}:=\Psi_{\t,h}(t_{n-1})\in\bm{V}_{h}^{r}$ with initial values denoted by setting $\t=0$. Find $(\bmu_{\t,h},\bmv_{\t,h},p_{\t,h},\Psi_{t,h})\in (\P_k(I_n;V_h^{r}))^{d}\times (\P_k(I_n;V_h^{r}))^{d}\times \P_k(I_n;V_h^{r})\times(\P_k(I_n;V_h^{r}))^{d}$ such that $\bmu_{\t,h}(t_{n-1})=\bmu^{n-1}_{\t,h},\bmv_{\t,h}(t_{n-1})=\bmv^{n-1}_{\t,h},p_{\t,h}(t_{n-1})=p^{n-1}_{\t,h},\Psi_{\t,h}(t_{n-1})=\Psi^{n-1}_{\t,h}$ and
		\begin{subequations}
			\begin{align}
				Q_{n}\left(\langle\p_t\bmu_{\t,h},\bm{\Phi}_{\t,h}^{U}\rangle-\langle\bmv_{\t,h},\bm{\Phi}_{\t,h}^{U}\rangle\right)&=0,
				\\
				Q_{n}\left(\langle \rho\p_t\bmv_{\t,h},\bm{\Phi}_{\t,h}^{V}\rangle+A(\bmu_{\t,h},\bm{\Phi}_{\t,h}^{V})+C(\bm{X}_{\t,h},p_{\t,h})+\langle\rho_f\p_t\Psi,\bm{\Phi}_{\t,h}^{V}\rangle\right)&=Q_{n}\left(\langle \rho\bmF,\bm{\Phi}_{\t,h}^{V}\rangle\right),
				\\
				Q_{n}\left(\left\langle c_0\p_tp_{\t,h},\Phi_{\t,h}^{P}\right\rangle-C(\p_t\bmu_{\t,h},\Phi_{\t,h}^{P})+\left\langle \div\Psi_{\t,h},\Phi_{\t,h}^{P}\right\rangle\right)&=0,
				\\
				Q_{n}\left(\left\langle\rho_fc_1F\p_t\Psi_{\t,h},\bm{\Phi}_{\t,h}^{\Psi}\right\rangle
				+\left\langle\rho_fF\Psi_{\t,h},\bm{\Phi}_{\t,h}^{\Psi}\right\rangle-\frac{d_1\eta_k}{\a}C(\bm{\Phi}_{\t,h}^{\Psi},p_{\t,h})+\left\langle\rho_fd_1\eta_k\p_t\bmv_{\t,h},\bm{\Phi}_{\t,h}^{\Psi}\right\rangle\right)&=Q_{n}\left(\left\langle \rho_c\bmF,\bm{\Phi}_{\t,h}^{\Psi}\right\rangle\right),
			\end{align}
            for $\left(\bm{\Phi}_{\t,h}^{U},\bm{\Phi}_{\t,h}^{V},\Phi_{\t,h}^{P},\bm{\Phi}_{\t,h}^{\Psi}\right)\in (\P_{k-1}(I_n;V_h^{r}))^{d}\times (\P_{k-1}(I_n;V_h^{r}))^{d}\times \P_{k-1}(I_n;V_h^{r})\times(\P_{k-1}(I_n;V_h^{r}))^{d}$ where $\rho_c =\rho_fd_1\eta_k$.
		\end{subequations}
	\end{problem}

	\begin{problem}[Variational form of $I_n$ problem]
		Let $k,r\geq 1$. For given $\bmu^{n-1}_{\t,h}:=\bmu_{\t,h}(t_{n-1})\in\bm{V}_{h}^{r}, \bmv_{\t,h}^{n-1}:=\bmv_{\t,h}(t_{n-1})\in\bm{V}_{h}^{r}, p_{\t,h}^{n-1}:=p_{t,h}(t_{n-1})\in V_{h}^{r},\Psi_{\t,h}^{n-1}:=\Psi_{\t,h}(t_{n-1})\in\bm{V}_{h}^{r}$ with initial values denoted by setting $\t=0$. Find $(\bmu_{\t,h},\bmv_{\t,h})\in (\P_k(I_n;V_h^{r}))^{d}\times (\P_k(I_n;V_h^{r}))^{d}$ and $(p_{\t,h},\Psi_{t,h})\in  \P_k(I_n;V_h^{r})\times(\P_k(I_n;V_h^{r}))^{d}$ such that $\bmu_{\t,h}(t_{n-1})=\bmu_{\t,h}^{n-1},\bmv_{\t,h}(t_{n-1})=\bmv_{\t,h}^{n-1}), p_{\t,h}(t_{n-1})=p^{n-1}_{\t,h}, \Psi_{\t,h}(t_{n-1})=\Psi^{n-1}_{\t,h}$ and
		\begin{subequations}\label{variational}
			\begin{align}
				\int_{I_n}\langle \p_t \bmu_{\t,h},\bmPth^{1}\rangle-\langle  \bmv_{\t,h},\bmPth^{1}\rangle \,dt&= 0,\\
				\int_{I_n}\langle \p_t \bmv_{\t,h},\bmPth^{2}\rangle+\langle \bm{A}_{h}\bmu_{\t,h},\bmPth^{2}\rangle-\a\langle p_{\t,h},\div\bmPth^{2}\rangle +\langle \rho_f\p_t\Psi_{\t,h},\bmPth^{2}\rangle\, dt &= Q_{n}\left(\langle \rho\bmF,\bmPth^{2}\rangle\right),
				\\
				\int_{I_n}\langle c_0\p_tp_{\t,h},\psi_{\t,h}\rangle+\a\langle\div\p_t\bmu_{\t,h},\psi_{\t,h}\rangle+\langle\div\Psi_{\t,h},\psi_{\t,h}\rangle\, dt&=0,
				\\
				\int_{I_n}\langle \rho_fc_1F\p_t\Psi_{\t,h},\bmPth^{3}\rangle+\langle \rho_fF\Psi_{\t,h},\bmPth^{3}\rangle-\langle d_1\eta_kp_{\t,h},\div\bmPth^{3}\rangle+\langle \rho_fd_1\eta_k\p_t\bmv_{\t,h},\bmPth^{3}\rangle \,dt&=Q_{n}\left(\left\langle\rho_fd_1\eta_k\bmF,\bmPth^{3}\right\rangle\right)
			\end{align}
		\end{subequations}
		for all $(\bmPth^{1},\bmPth^{2}, \psi_{\t,h},\bmPth^{3})\in(\P_{k-1}(I_n;V_h^{r}))^{d}\times(\P_{k-1}(I_n;V_h^{r}))^{d}\times\P_{\t,h}(I_n;V_h^{r})\times(\P_{k-1}(I_n;V_h^{r}))^{d}$.
	\end{problem}

	\section{Error analysis}\label{sec:error}
	In this section, we derive the error estimate \eqref{eq:introerror} for the discrete \Cref{prob:quadratureform}. The key ideas and steps of the derivation of \eqref{eq:introerror} are illustrated in the flow chart of \Cref{fig:schematic}. 
      \begin{figure}[H]
     \centering
     \begin{tikzpicture}
    \node[scale=0.45] {\includegraphics[width=\textwidth]{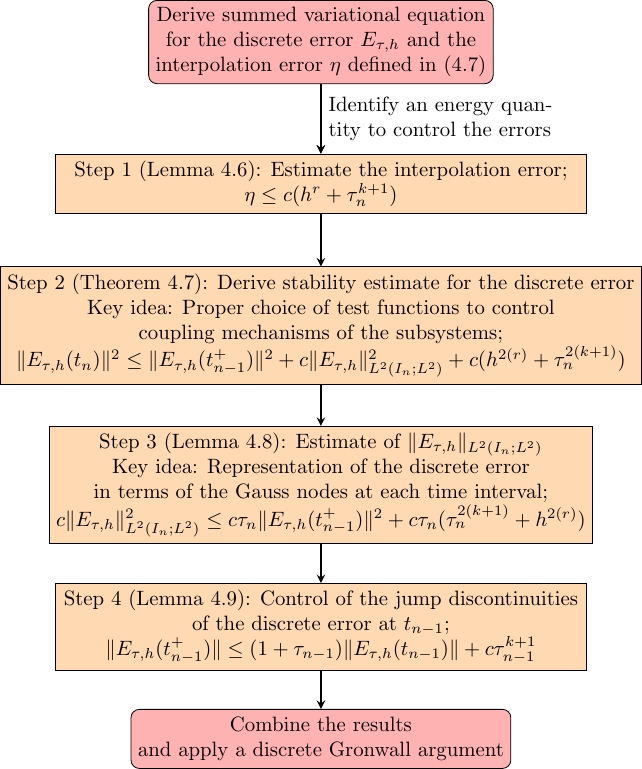}};
    \end{tikzpicture}
    \captionsetup{font=footnotesize}
     \caption{Schematic of proof strategy. Error estimation by bounding the discrete error $E_{\tau,h}$ and the interpolation error $\eta$, each associated with all unknowns of the system. }
     \label{fig:schematic}
 \end{figure}

	\subsection{Auxiliary results for the error analysis}
	Here, we present some auxiliary results needed for the error analysis. 
	
	\begin{definition}[Special approximation $(\bmw_{1},\bmw_{2})$ of $(\bmu,\p_t\bmu)$]\label{def:specialapprox}
		Let $\bmu\in C^{1}(\widebar{I};\bm{H}_0^{1})$ be given. On the subinterval $I_{n}=(t_{n-1},t_{n}]$ we define
		\begin{align}\label{eq:special approximation}
			\bmw_1:=I_{\t}\left(\int_{t_{n-1}}^{t}\bmw_{2}(s)\,ds+\bm{R}_{h}\bmu(t_{n-1})\right),\quad \mbox{ where } \bmw_2:=I_{\t}(\bm{R}_h\p_t\bmu).
		\end{align}
		Also $\bmw_1(0):=\bm{R}_h\bmu(0)$.
	\end{definition}
	\begin{lemma}
		For $\bmw_{1}$ and $\bmw_2$ defined in \cref{def:specialapprox}, it holds that 
		\begin{align}
			\int_{I_n}\langle\p_t\bmw_1,\bm{\phi}_{\t,h}\rangle \,dt = \int_{I_n}\langle\bmw_2,\bm{\phi}_{\t,h}\rangle\, dt,\quad\forall\,\bm{\phi}_{\t,h}\in\left(\P_{k-1}(I_n;V^{r}_{h})\right)^{d}.\label{eq:lemma:4_2}
		\end{align}
	\end{lemma}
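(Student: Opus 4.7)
The plan is to reduce the claimed identity to a statement about the Gauss--Lobatto interpolation error of an antiderivative, and then to a one-dimensional polynomial identity that follows from standard nodal and quadrature properties of Gauss--Lobatto points. First, since $I_{\t}$ is linear on $I_{n}$ and leaves time-independent $\bm{V}_{h}^{r}$-valued functions unchanged, I would rewrite
$$\bmw_{1} = I_{\t}g + \bm{R}_{h}\bmu(t_{n-1}),\qquad g(t):=\int_{t_{n-1}}^{t}\bmw_{2}(s)\,ds,$$
so that $\p_{t}\bmw_{1}=\p_{t}(I_{\t}g)$ and $\bmw_{2}=g'$. Setting $\eta:=I_{\t}g-g$, identity \eqref{eq:lemma:4_2} is therefore equivalent to
$$\int_{I_{n}}\langle\p_{t}\eta,\bmpth\rangle\,dt=0 \quad\text{for all }\bmpth\in(\P_{k-1}(I_{n};V_{h}^{r}))^{d}.$$

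Next I would exploit the structure of $\eta$. Since $\bmw_{2}|_{I_{n}}\in\P_{k}(I_{n};\bm{V}_{h}^{r})$, the antiderivative $g$ is a $\bm{V}_{h}^{r}$-valued polynomial in $t$ of degree $k+1$, while $I_{\t}g\in\P_{k}(I_{n};\bm{V}_{h}^{r})$. Hence $\eta$ has degree at most $k+1$ in $t$ and, by the defining property \eqref{operator:globalLagrangeinterpolation} of $I_{\t}$, vanishes at all $k+1$ Gauss--Lobatto nodes $\glp$, $\mu=0,\dots,k$. Therefore $\eta$ factors as
$$\eta(t,x)=\pi(t)\,c(x),\qquad \pi(t):=\prod_{\mu=0}^{k}(t-\glp),$$
for some $c\in\bm{V}_{h}^{r}$ (taken componentwise).

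With this factorization the integral becomes
$$\int_{I_{n}}\langle\p_{t}\eta,\bmpth\rangle\,dt=\int_{I_{n}}\pi'(t)\,q(t)\,dt,\qquad q(t):=\langle c,\bmpth(t)\rangle\in\P_{k-1}(I_{n};\R).$$
An integration by parts then yields $\int_{I_{n}}\pi' q\,dt=[\pi q]_{t_{n-1}}^{t_{n}}-\int_{I_{n}}\pi q'\,dt$. The boundary contributions vanish because the endpoints of $I_{n}$ are themselves Gauss--Lobatto nodes, so $\pi(t_{n-1})=\pi(t_{n})=0$. The remaining integrand $\pi q'$ is a scalar polynomial of degree at most $2k-1$, hence the $(k+1)$-point Gauss--Lobatto rule is exact on it, and
$$\int_{I_{n}}\pi q'\,dt=Q_{n}(\pi q')=\frac{\t_{n}}{2}\sum_{\mu=0}^{k}\hat\om_{\mu}\,\pi(\glp)\,q'(\glp)=0,$$
since $\pi$ vanishes at every Gauss--Lobatto node. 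The case $k=1$ is even more direct: then $q\in\P_{0}$, so $q'\equiv 0$ and only the (vanishing) boundary term survives.

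I do not expect a genuine obstacle; the argument is driven by two standard facts about the $(k+1)$-point Gauss--Lobatto rule on $I_{n}$, namely that the nodal polynomial $\pi$ vanishes at all nodes \emph{including} the two endpoints, and that the rule is exact on $\P_{2k-1}$. The only subtle observation is that after extracting the factor $\pi(t)$ the spatial variable plays no active role, so the identity collapses to a purely one-dimensional polynomial computation that works uniformly for all $k\ge 1$.
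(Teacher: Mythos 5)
Your proof is correct. Note that the paper itself states this lemma without proof (the argument is deferred to the cited literature, cf.\ \cite{bause2024}), so there is no in-paper proof to compare against; the standard argument there proceeds by integrating $\int_{I_n}\langle\p_t I_\t G,\bm{\phi}_{\t,h}\rangle\,dt$ by parts, replacing $I_\t G$ by $G$ at the Gauss--Lobatto nodes via exactness of the quadrature on $\P_{2k-1}$, and undoing the integration by parts. Your route is the same in substance --- integration by parts, vanishing of the nodal polynomial at the endpoints, and Gauss--Lobatto exactness on $\P_{2k-1}$ --- with the added (correct and clarifying) observation that the interpolation error $\eta=I_\t g-g$ of the degree-$(k+1)$ antiderivative factors as $\pi(t)c(x)$ with $\pi$ the Gauss--Lobatto nodal polynomial, which reduces everything to a one-dimensional computation; all degree counts ($\pi q'\in\P_{2k-1}$) and the treatment of the constant $\bm{R}_h\bmu(t_{n-1})$ check out.
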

	\begin{lemma}\label{lemma:4_3}
		For $\bm{y}_{\t,h},\bm{z}_{\t,h} \in\left(\P_{k-1}(I_n;V^{r}_{h})\right)^{d}$ let
		\begin{align}
			\int_{I_n}\langle\p_t\bm{y},\bm{\phi}_{\t,h}\rangle-\langle\bm{z},\bm{\phi}_{\t,h}\rangle \,dt=0,\quad\forall\,\bm{\phi}_{\t,h}\in\left(\P_{k-1}(I_n;V^{r}_{h})\right)^{d}.
		\end{align}
		Then, it holds that 
		\begin{align}
			\p_t\bm{y}_{\t,h}(t^{G}_{n,\mu})=\bm{z}_{\t,h}(t^{G}_{n,\mu}), \quad \mbox{ for }\mu=1,...,k,
		\end{align}
		where $t^{G}_{n,\mu}$ are the Gauss quadrature nodes of the subinterval $I_n$.
	\end{lemma}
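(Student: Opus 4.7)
The plan is to test the hypothesis with the specific choice $\bm{\phi}_{\t,h} = \p_t\bm{y}_{\t,h} - \bm{z}_{\t,h}$. To justify that this is an admissible test function, I first need to check that $\p_t\bm{y}_{\t,h} - \bm{z}_{\t,h}$ belongs to $(\P_{k-1}(I_n;V^{r}_{h}))^{d}$. Under the trial/test setup of the continuous Galerkin method (cf.\ \Cref{prob:quadratureform}), $\bm{y}_{\t,h}$ is polynomial in time of degree at most $k$, so $\p_t\bm{y}_{\t,h}$ has degree at most $k-1$; combined with $\bm{z}_{\t,h}\in(\P_{k-1}(I_n;V^{r}_{h}))^{d}$, the difference lies in $(\P_{k-1}(I_n;V^{r}_{h}))^{d}$ as required.

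Substituting this test function into the hypothesis yields
\begin{align*}
\int_{I_n}\|\p_t\bm{y}_{\t,h}-\bm{z}_{\t,h}\|^{2}\,dt=0,
\end{align*}
from which I conclude $\p_t\bm{y}_{\t,h}-\bm{z}_{\t,h}\equiv 0$ on $I_n$ as an element of $C(\bar I_n;\bm{L}^{2})$, since the integrand is a nonnegative continuous function of time. Evaluating at the Gauss nodes $t^{G}_{n,\mu}$, $\mu=1,\dots,k$, delivers the claim.

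An alternative proof I would keep in mind, in case the Gauss-node formulation is being emphasized on purpose, uses exactness of the Gauss quadrature: since the integrand $\langle\p_t\bm{y}_{\t,h}-\bm{z}_{\t,h},\bm{\phi}_{\t,h}\rangle$ is a polynomial in time of degree at most $2(k-1)\leq 2k-1$, the integral coincides with $Q_n^{G}(\langle\p_t\bm{y}_{\t,h}-\bm{z}_{\t,h},\bm{\phi}_{\t,h}\rangle)$ from \eqref{gaussquadratureInKthpoint}. Choosing $\bm{\phi}_{\t,h}$ to be the tensor product of a Lagrange temporal basis function vanishing at all Gauss nodes except $t^{G}_{n,\mu}$ with an arbitrary $\bm{\psi}_{h}\in(V^{r}_{h})^{d}$ reduces the identity to $\hat{\omega}_{\mu}^{G}\langle(\p_t\bm{y}_{\t,h}-\bm{z}_{\t,h})(t^{G}_{n,\mu}),\bm{\psi}_h\rangle=0$; since $\hat\omega_\mu^G>0$ and $(\p_t\bm{y}_{\t,h}-\bm{z}_{\t,h})(t^{G}_{n,\mu})\in(V^{r}_{h})^{d}$, taking $\bm{\psi}_h=(\p_t\bm{y}_{\t,h}-\bm{z}_{\t,h})(t^{G}_{n,\mu})$ gives the result.

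There is essentially no obstacle in this proof: the only subtlety is bookkeeping the polynomial degree to ensure the test-function space is rich enough to contain $\p_t\bm{y}_{\t,h}-\bm{z}_{\t,h}$; once that is verified, either route closes the argument cleanly.
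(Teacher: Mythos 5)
The paper states this lemma without proof (it is quoted from \cite{bause2020,bause2024}), so the comparison here is against the standard argument in those references, which is exactly your \emph{second} route: exactness of the $k$-point Gauss formula \eqref{gaussquadratureInKthpoint} for polynomials of degree $\le 2k-1$, followed by testing with Lagrange basis functions supported at a single Gauss node. That part of your proposal is correct and is the intended proof.

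Your \emph{primary} argument, however, has a genuine gap, and it is precisely the degree bookkeeping you flag as the ``only subtlety.'' As the lemma is actually used in the paper --- see \eqref{eq:error2topartialerror1}, where $\bm{y}_{\t,h}=\bmE^{1}_{\t,h}$ and $\bm{z}_{\t,h}=\bmE^{2}_{\t,h}$ --- both functions lie in $(\P_{k}(I_n;V^{r}_{h}))^{d}$ (the statement's ``$\P_{k-1}$'' for the trial functions is a typo; only the test space is $\P_{k-1}$). Then $\p_t\bm{y}_{\t,h}\in\P_{k-1}$ but $\bm{z}_{\t,h}\in\P_{k}$, so $\p_t\bm{y}_{\t,h}-\bm{z}_{\t,h}$ has temporal degree $k$ and is \emph{not} an admissible test function; the energy identity $\int_{I_n}\|\p_t\bm{y}_{\t,h}-\bm{z}_{\t,h}\|^{2}\,dt=0$ cannot be obtained, and indeed the conclusion $\p_t\bm{y}_{\t,h}\equiv\bm{z}_{\t,h}$ on all of $I_n$ is false in general (a $\P_{k-1}$ function and a $\P_{k}$ function can agree only at the $k$ Gauss nodes, which is exactly why the lemma is stated pointwise at those nodes). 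Your own reading is internally inconsistent: you take $\bm{y}_{\t,h}$ of degree $k$ from the cG trial space but $\bm{z}_{\t,h}$ of degree $k-1$ from the (mistyped) hypothesis; under the literal $\P_{k-1}$ reading the lemma's restriction to Gauss nodes would be vacuous, which should have signalled the typo. The fix is simply to discard the first route and keep the second, noting that the integrand $\langle\p_t\bm{y}_{\t,h}-\bm{z}_{\t,h},\bm{\phi}_{\t,h}\rangle$ then has degree $k+(k-1)=2k-1$, still within the exactness range of the Gauss rule, so that argument closes the proof as you wrote it.
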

	A useful result is that the Lagrange interpolant satisfies the stability estimates (cf.\ \cite[Eqs. (3.15) and (3.16)]{makridakis2005})
	\begin{subequations}\label{eqs:stability}
		\begin{align}\label{eq:stabilitya}
			\|I_{\t}w\|_{L^{2}(I_n;L^{2})}\leq&\, c\|w\|_{L^{2}(I_n;L^{2})}+c\t_{n}\|\p_t w\|_{L^{2}(I_n;L^{2})},\\
			\left\|\int_{t_{n-1}}^{t}w ds\right\|_{L^{2}(I_n;L^{2})}\leq&\,
			c\t_{n}\|w\|_{L^{2}(I_n;L^{2})}\label{eq:stabilityb}.
		\end{align}
	\end{subequations}
	We will also make use of the $H^{1}-L^{2}$ inverse inequality
	\begin{align}\label{eq:HoneLtwoInverseIneq}
		\|w'\|_{L^{2}(I_n;\R)}\leq c\t_{n}^{-1}\|w\|_{L^{2}(I_n;\R)},\quad \forall\,w\in\P_{k}(I_{n};\R).
	\end{align}

	\subsection{Splitting of the error}
	First, we split the error into 
	\begin{subequations}\label{eqs:errorsplit}
		\begin{align}
				\bmu-\bmu_{\t,h}&=\bmu-\bm{w}_1+\bm{w}_1-\bmu_{\t,h}:=\bm{\eta}_1+\bmEth^{1}\\
				\bmv-\bmv_{\t,h}
			&=
			\bmv-\bm{w}_2+
				\bm{w}_2-\bmv_{\t,h}
			=:\bm{\eta}_2+\bm{E}_{\t,h}^{2},\\
			p-p_{\t,h}&=p-I_\t R_h p+I_\t R_h p-p_{\t,h}=:\om+e_{\t,h},\label{eq:pressureerrorsplit}\\
			\Psi-\Psi_{\t,h}&=\Psi-I_\t \bmR_h \Psi+I_\t \bmR_h \Psi-\Psi_{\t,h}=:\bm{\zeta}+\bm{E}_{\t,h}^{3},\label{eq:adeerrorsplit}
		\end{align}
	\end{subequations}
    where $(\bmw_1,\bmw_2)$ are defined in \Cref{def:specialapprox}. We refer to $\bmEth^{i}$ and $e_{\t,h}$ as the discrete error and $\bm{\eta}_i$, $\bm{\zeta}$ and $\om$ as the interpolation error.
	We define a norm $\norm{\cdot}$ and a weighted elasticity norm $\norm{\cdot}_e$ for a quantity $\bm{Z}\in\bm{H}_0^{1}$ by
	\begin{align}
		\norm{\bm{Z}}:=\left(\|\del\bm{Z}\|^{2}\right)^{1/2},\quad 
		\norm{\bm{Z}}_e:=\left(\frac12\langle\textbf{C}\e(\bm{Z}),\e(\bm{Z})\rangle\right)^{1/2}.
	\end{align}
	These norms are equivalent in the sense that there holds that 
	\begin{align}\label{eq:norm}
		c_1\norm{\bm{Z}}\leq\norm{\bm{Z}}_e\leq c_2\norm{\bm{Z}}, \quad c_1>0,\quad c_2>0.
	\end{align}
	Recall from \cite{howell1991} that the Lagrange interpolation \eqref{operator:globalLagrangeinterpolation} satisfies
 \begin{equation}\label{eq:lagrangestability}
     \|f-I_\t f\|_{L^{q}(I_n;H^{m})}\leq c\t_{n}^{k+1}\|\p_t^{k+1}f\|_{L^{q}(I_n;H^{m})}, \quad q\in\{2,\infty\},\quad m\in\{0,1\}.
 \end{equation}
In addition for the elliptic projections onto $\bm{V}^{r}_h$ and $V^{r}_{h}$ we have the error estimates (see e.g. \cite{brenner2008})
	\begin{subequations}
		\begin{align}
			\|p-R_h p\|+h\|\del(p-R_h p)\|\leq&\, c h^{r+1}\|p\|_{r+1},\label{eq:ellipticprojectionerrorScalar}\\
			\|\bmv-\bmR_h\bmv\|+h\|\del(\bmv-\bmR_h\bmv)\|\leq&\, c h^{r+1}\|\bmv\|_{r+1}.\label{eq:ellipticprojectionerrorVector}
		\end{align}
	\end{subequations}
	We assume that the initial approximations $\bmu_{0,h},\bmv_{0,h}, p_{0,h}$ and $\Psi_{0,h}$ are chosen such that the approximation properties
	\begin{subequations}\label{ass:initialapprox}
		\begin{align}
			\|\del(\bmR_h\bmu_0-\bmu_{0,h})\|\leq&\, ch^{r}\|\bmu_0\|_{r+1},\\
			\|\bmR_h\bmv_0-\bmv_{0,h}\|\leq& \,ch^{r+1}\|\bmv_0\|_{r+1},\\
			\|R_hp_0-p_{0,h}\|\leq& \,ch^{r+1}\|p_0\|_{r+1},\\
			\|\bmR_h\Psi_0-\Psi_{0,h}\|\leq& \,ch^{r+1}\|\Psi_0\|_{r+1}.
		\end{align}
	\end{subequations}

 \begin{lemma}\label{lemma:splittingestimate}
     For the quantities defined when splitting the error in \eqref{eqs:errorsplit}, there holds
     \begin{subequations}
         \begin{align}
         \begin{split}
             \|\bm{\eta}_1\|_{L^{m}(I_n;\bm{L}^{2})}\leq&\,c\t_{n}^{k+1}\left(\|\p_t^{k+1}\bmu\|_{L^{m}(I_n,\bm{L}^{2})}+\|\p_t^{k+2}\bmu\|_{L^{m}(I_n,\bm{L}^{2})}\right)
             \\&\,\,+ch^{r+1}\left(\|\bmu\|_{L^{m}(I_n;\bm{H}^{r+1})}+\t_n\left(\|\p_t\bmu\|_{L^{m}(I_n;\bm{H}^{r+1})}+\t_n\|\p_t^{2}\bmu\|_{L^{m}(I_n;\bm{H}^{r+1})}\right)\right),
             \end{split}
             \\
             \|\bm{\eta}_2\|_{L^{m}(I_n;\bm{L}^{2})}\leq&\,c\left(\t_{n}^{k+1}\p_t^{k+2}\bmu\|_{L^{m}(I_n,\bm{L}^{2})}+h^{r+1}\left(\|\p_t\bmu\|_{L^{m}(I_n;\bm{H}^{r+1})}+\t_n\|\p_t^{2}\bmu\|_{L^{m}(I_n;\bm{H}^{r+1})}\right)\right), 
             \\
             \|\bm{\eta}_1\|_{L^{m}(I_n;\bm{H}^{1})}\leq&\,c\left(\t_{n}^{k+1}\left(\|\p_t^{k+1}\bmu\|_{L^{m}(I_n,\bm{H}^{1})}+\t_n\|\p_t^{k+2}\bmu\|_{L^{m}(I_n,\bm{H}^{1})}\right)+h^{r}\|\bmu\|_{L^{m}(I_n;\bm{H}^{r+1})}\right), \label{eq:splitting2}
             \\
             \|\om\|_{L^{m}(I_n;L^{2})}\leq&\,c\left(\t_n^{k+1}\|\p_t^{k+2}p\|_{L^{m}(I_n,L^{2})}+h^{r+1}\left(\|p\|_{L^{m}(I_n;H^{r+1})}+\t_n\|\p_tp\|_{L^{m}(I_n;H^{r+1})}\right)\right),\label{eq:splitting3} 
             \\
             \|\bm{\zeta}\|_{L^{m}(I_n;\bm{L}^{2})}\leq &\,c\left(\t_n^{k+1}\|\p_t^{k+1}\Psi\|_{L^{m}(I_n;\bm{L}^{2})}+h^{r+1}\left(\|\Psi\|_{L^{m}(I_n;\bm{H}^{r+1})}+\t_n\|\p_t\Psi\|_{L^{m}(I_n;\bm{H}^{r+1})}\right)\right),\label{eq:splitting4}
         \end{align}
     \end{subequations}
     for $m=2$ or $m=\infty$.
 \end{lemma}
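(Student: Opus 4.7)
The plan is uniform across the four quantities: insert the appropriate elliptic projection ($R_h$ or $\bmR_h$) and combine the Lagrange interpolation error \eqref{eq:lagrangestability}, the elliptic projection estimates \eqref{eq:ellipticprojectionerrorScalar}--\eqref{eq:ellipticprojectionerrorVector}, and the stability bounds \eqref{eqs:stability} of $I_\t$. For $\bm{\eta}_2=\p_t\bmu-I_\t\bmR_h\p_t\bmu$, $\om=p-I_\t R_hp$ and $\bm{\zeta}=\Psi-I_\t\bmR_h\Psi$, each has the generic form $z-I_\t\Pi_h z$, and I would decompose
\begin{equation*}
  z-I_\t\Pi_h z = (z-I_\t z) + I_\t(z-\Pi_h z).
\end{equation*}
Applying \eqref{eq:lagrangestability} to the first piece produces the $\t_n^{k+1}\|\p_t^{k+2}\bmu\|$, $\t_n^{k+1}\|\p_t^{k+2}p\|$, and $\t_n^{k+1}\|\p_t^{k+1}\Psi\|$ contributions, respectively (noting that the extra derivative in the first two comes from interpolating $\p_t\bmu$ rather than $\bmu$). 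The second piece is bounded by \eqref{eq:stabilitya}, giving $c\|z-\Pi_h z\|+c\t_n\|\p_t(z-\Pi_h z)\|$; commuting $\p_t$ with the time-independent projection $\Pi_h$ and invoking \eqref{eq:ellipticprojectionerrorScalar}/\eqref{eq:ellipticprojectionerrorVector} yields the $h^{r+1}$ and $\t_nh^{r+1}$ contributions. The $L^\infty$-in-time case is handled by the corresponding sup-norm variants of the same estimates.

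For $\bm{\eta}_1=\bmu-\bmw_1$, the integral construction of $\bmw_1$ through $\bmw_2$ requires an additional step. Using the identity $\bmR_h\bmu(t)=\bmR_h\bmu(t_{n-1})+\int_{t_{n-1}}^t\bmR_h\p_t\bmu(s)\,ds$ together with the linearity of $I_\t$, and adding and subtracting $I_\t\bmR_h\bmu$ inside the definition of $\bmw_1$, a short manipulation yields
\begin{equation*}
  \bm{\eta}_1 = (\bmu-\bmR_h\bmu) + (I-I_\t)\bmR_h\bmu + I_\t\!\left(\int_{t_{n-1}}^t (I-I_\t)\bmR_h\p_t\bmu(s)\,ds\right).
\end{equation*}
The first term is controlled by \eqref{eq:ellipticprojectionerrorVector} with factor $h^{r+1}\|\bmu\|_{r+1}$; the second by \eqref{eq:lagrangestability} applied to $\bmR_h\bmu$, which after passing from $\bmR_h$ back to $I$ via the elliptic error bound gives the $\t_n^{k+1}\|\p_t^{k+1}\bmu\|$ and $\t_nh^{r+1}\|\p_t\bmu\|_{r+1}$ contributions. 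The third, nested term is the most delicate: applying \eqref{eq:stabilitya} to the outer $I_\t$, using \eqref{eq:stabilityb} on the inner integral, and exploiting $\p_t\int_{t_{n-1}}^t w(s)\,ds=w(t)$ reduces it to $\|(I-I_\t)\bmR_h\p_t\bmu\|_{L^2(I_n;\bm{L}^2)}$ multiplied by a factor of $\t_n$, to which \eqref{eq:lagrangestability} yields the $\t_n^{k+1}\|\p_t^{k+2}\bmu\|$ and $\t_n^2h^{r+1}\|\p_t^2\bmu\|_{r+1}$ terms. For the $\bm{H}^1$-bound \eqref{eq:splitting2}, the same split is used: since the spatial gradient commutes with $\p_t$ and with $I_\t$, the temporal contributions retain their $\t_n^{k+1}$ rates, while $\|\nabla(\bmu-\bmR_h\bmu)\|$ contributes $h^r\|\bmu\|_{r+1}$ by the standard $H^1$-bound of the elliptic projection.

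The main obstacle is the control of the nested term $I_\t(\int_{t_{n-1}}^t(I-I_\t)\bmR_h\p_t\bmu\,ds)$: one must apply the two stability bounds \eqref{eq:stabilitya} and \eqref{eq:stabilityb} in the correct order so that the factor $\t_n$ gained from the time integration combines properly with the $\t_n^{k+1}$ factor from the interpolation error of $\bmR_h\p_t\bmu$. A secondary technical subtlety is the transition from $L^2(I_n;\cdot)$ to $L^\infty(I_n;\cdot)$ when $m=\infty$; here one relies on the sup-norm versions of \eqref{eq:lagrangestability} and of the interpolant stability, and may invoke the inverse estimate \eqref{eq:LInfinityLTwo} on the polynomial parts of the interpolant when transferring between norms.
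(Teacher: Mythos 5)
Your proposal is correct and follows essentially the same route as the paper: for $\om$ and $\bm{\zeta}$ the paper uses exactly your decomposition $z-I_\t\Pi_h z=(z-I_\t z)+I_\t(z-\Pi_h z)$ together with \eqref{eq:lagrangestability}, \eqref{eq:stabilitya} and the elliptic projection bounds, while for $\bm{\eta}_1$, $\bm{\eta}_2$ and \eqref{eq:splitting2} it simply cites \cite[Lemma 3.3]{makridakis2005} and \cite[Appendix]{bause2020}. Your explicit treatment of the nested term $I_\t\bigl(\int_{t_{n-1}}^t(I-I_\t)\bmR_h\p_t\bmu\,ds\bigr)$ via \eqref{eq:stabilitya}--\eqref{eq:stabilityb} is precisely the content of those cited arguments, so you have filled in the details the paper delegates to references.
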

\begin{proof}
The first two inequalities follow similarly to \cite[Lemma 3.3]{makridakis2005}. \eqref{eq:splitting2} is proved for the scalar-valued case in \cite[Appendix]{bause2020}. It also holds for the vector-valued case. Consider \eqref{eq:splitting4}. From the error splitting \eqref{eqs:errorsplit}, we have $\bm{\zeta}=\Psi-I_\t\Psi+I_t\Psi-I_\t\bmR_h\Psi$. Then, by the stability of $I_\t$ \eqref{eq:lagrangestability} we get
     \begin{equation}
         \|\Psi-I_\t\Psi\|_{L^{m}(I_n;\bm{L}^{2})}\leq c\t_n^{k+1}\|\p_t^{k+1}\Psi\|_{L^{m}(I_n;\bm{L}^{2})}.\label{eq:lem44_1}
     \end{equation}
     By \eqref{eq:stabilitya} we obtain
        \begin{equation}\label{eq:lem44_2}
         \begin{aligned}
             \|I_\t(\Psi-\bmR_h\Psi)\|_{L^{m}(I_n;\bm{L}^{2})}&\leq c  \|(\Psi-\bmR_h\Psi)\|_{L^{m}(I_n;\bm{L}^{2})}+c\t_n\|\p_t\Psi-\bmR_h\p_t\Psi)\|_{L^{m}(I_n;\bm{L}^{2})}\\
        &\overset{\eqref{eq:ellipticprojectionerrorVector}}{\leq} ch^{r+1}\|\Psi\|_{L^{m}(I_n;\bm{H}^{r+1})}+ c\t_nh^{r+1}\|\p_t\Psi\|_{L^{m}(I_n;\bm{H}^{r+1})}
         \end{aligned}
     \end{equation}
     Using the triangle inequality on $\bm{\zeta}$ with the estimates \eqref{eq:lem44_1} and \eqref{eq:lem44_2} results in \eqref{eq:splitting4}. The inequality \eqref{eq:splitting3} can be shown in a similar fashion using \eqref{eq:ellipticprojectionerrorScalar}.
 \end{proof}
 
	\subsection{Variational equations for \texorpdfstring{$\bmE_{\t,h}^{1},\bmEth^{2},\bmE_{\t,h}^{3},$}{TEXT} and \texorpdfstring{$e_{\t,h}$}{TEXT}}
 First, we derive the variational equations for the discrete errors. Note that the blue and red terms are important when choosing a test function later to control the errors.
	\begin{lemma}[Variational equations for $\bm{E}_{\t,h}^{1},\bmEth^{2}, e_{\t,h}$ and $\bm{E}_{\t,h}^{3}$]\label{lemma:varerror}
		Let 
		\begin{align}\label{eq:Trelations}
			T_{I}^{n}:=I_\t\int\p_t\bmu-I_\t\p_t\bmu ds,\, \bm{T}_{II}^{n}:=I_\t\bmu-\bmu, \bm{T}_{\bmF_1}^{n}:=\rho\bmF-I_\t(\rho\bmF),\bm{T}^{n}_{\bmF_2}:=\rho_f\bmF-I_\t(\rho_f\bmF).
		\end{align}
		The errors satisfy the equations
		
		\begin{subequations}\label{eq:errorvariational}
			\begin{align}
				&\int_{I_n}\langle \p_t\bmE^{1}_{\t,h},\bmPth^{1}\rangle-\langle  \bmE^{2}_{\t,h},\bmPth^{1}\rangle dt= 0,\label{eq:errorVelocity}
				\\
				\begin{split}
                &\int_{I_n}\langle \p_t\bmE^{2}_{\t,h},\bmPth^{2}\rangle+\langle \bm{A}_{h}\bmE^{1}_{\t,h},\bmPth^{2}\rangle-\underbrace{\a\langle e_{\t,h},\div\bmPth^{2}\rangle}_{\mathfrak{A}_{\a}^{V}} +\rho_f\langle \p_t\bmE^{3}_{\t,h},\bmPth^{2}\rangle dt \label{eq:errorMechanincs}
				\\
				&\quad\quad= \int_{I_n}\langle T_{\bmF_1},\bmPh^{2}\rangle dt
				-\int_{I_n}\langle \p_t\bm{\eta}_{2},\bmPth^{2}\rangle dt-\int_{n}\left\langle \bm{A}_h \bm{T}_{I}^{n},\bmPth^{2}\right\rangle dt
				\\
                &\quad\quad\,+\int_{I_n}\langle\bm{A}_h\bm{T}_{II}^{n},\bmPth^{2}\rangle dt+\int_{I_n}\a\langle \om,\div\bmPth^{2}\rangle dt -\int_{I_n}\rho_f\langle \p_t\bm{\zeta},\bmPth^{2}\rangle dt,
                \end{split}
				\\
                \begin{split}
				&\int_{I_n}c_0\langle\p_te_{\t,h},\psi_{\t,h}\rangle+\underbrace{\a\langle \div\p_t\bmE^{1}_{\t,h},\psi_{\t,h}\rangle}_{\mathfrak{A}_{\a}^{P}}+\underbrace{\langle\div\bmEth^{3},\psi_{\t,h}\rangle}_{\mathfrak{B}_{\eta_{k}}^{P}} dt\label{eq:errorFlow}
                \\
				&\quad\quad=-\int_{I_n}c_0\langle\p_t\om,\psi_{\t,h}\rangle dt-\int_{I_n}\a\langle \div\p_t\bm{\eta}_{1},\psi_{\t,h}\rangle dt -\int_{I_n}\langle\div\bm{\zeta},\psi_{\t,h}\rangle dt,
                \end{split}
				\\
                \begin{split}
				&\int_{I_n}\rho_fc_1F\langle \p_t\bmE^{3}_{\t,h},\bmPth^{3}\rangle+\rho_fF\langle \bmE^{3}_{\t,h},\bmPth^{3}\rangle-\underbrace{d_1\eta_k\langle e_{\t,h},\div\bmPth^{3}\rangle}_{\mathfrak{B}_{\eta_{k}}^{\Psi}}
				+\rho_fd_1\eta_k\langle \p_t\bmE^{2}_{\t,h},\bmPth^{3}\rangle \label{eq:errorADE}
				\\
				&\quad\quad=\int_{I_n}\langle T_{\bmF_2},\bmPth^{3}\rangle
				-\int_{I_n}\rho_fc_1F\langle \p_t\bm{\zeta},\bmPth^{3}\rangle dt-\int_{I_n}\rho_fF\langle \bm{\zeta},\bmPth^{3}\rangle dt\\
				&\quad\quad\,+\int_{I_n}d_1\eta_k\langle \om,\div\bmPth^{3}\rangle dt
				-\int_{I_n}\rho_fd_1\eta_k\langle \p_t\bm{\eta}_{2},\bmPth^{3}\rangle dt,\end{split}
			\end{align}
		\end{subequations}
		for all $\bmPth^{1}\in(\P_{k-1}(I_n;V_h^{r})^{d},\bmPth^{2}\in(\P_{k-1}(I_n;V_h^{r})^{d}$, $\psi_{\t,h}\in\P_{\t,h}(I_n;V_h^{r})$ and $\bmPth^{3}\in(\P_{k-1}(I_n;V_h^{r}))^{d}$.
	\end{lemma}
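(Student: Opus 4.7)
The plan is to obtain each of the four equations by restricting a test function in \eqref{eqs:WeakformfirstorderADE} to the discrete space on $I_n$ and subtracting the corresponding equation of \eqref{variational}, after which the splittings \eqref{eqs:errorsplit} isolate the discrete errors on the left. Three routine manipulations handle most of the bookkeeping: (i) the quadrature--versus--integral discrepancy on the source side, which by $(k{+}1)$-point Gauss--Lobatto exactness on $\P_{2k-1}$ collapses to $\int_{I_n}\langle\bm{T}_{\bmF_1}^{n},\bmPth^{2}\rangle\,dt$ and $\int_{I_n}\langle\bm{T}_{\bmF_2}^{n},\bmPth^{3}\rangle\,dt$ after noting that $I_\t(\rho\bmF)$ agrees with $\rho\bmF$ at the Gauss--Lobatto nodes and that $\langle I_\t(\rho\bmF),\bmPth\rangle$ is a time polynomial of degree $\le 2k-1$; (ii) integration by parts in space, $\langle\del p,\cdot\rangle = -\langle p,\div\cdot\rangle$, needed to match the form of \eqref{variational} in the mechanics and memory equations; and (iii) the identity $\bmv = \p_t\bmu$ substituted into the continuous flow and ADE equations so that the splitting of $\bmu$ can be used in place of a separate splitting of $\bmv$.

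The velocity relation \eqref{eq:errorVelocity} is then immediate: after substitution the residual on the right collapses to $\int_{I_n}\langle\p_t\bm{w}_{1}-\bm{w}_{2},\bmPth^{1}\rangle\,dt$, which vanishes by \eqref{eq:lemma:4_2}. Equations \eqref{eq:errorFlow} and \eqref{eq:errorADE} follow by a direct bookkeeping of the splittings $p - p_{\t,h} = \om + e_{\t,h}$, $\Psi - \Psi_{\t,h} = \bm{\zeta} + \bmE_{\t,h}^{3}$, $\p_t\bmu - \p_t\bmu_{\t,h} = \p_t(\bm{\eta}_{1} + \bmE_{\t,h}^{1})$, and $\bmv - \bmv_{\t,h} = \bm{\eta}_{2} + \bmE_{\t,h}^{2}$, combined with (ii) for the pressure-gradient term in the ADE.

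The delicate step, and the real content of the proof, lies in handling the elasticity form in \eqref{eq:errorMechanincs}. Writing $A(\bmu - \bmu_{\t,h},\bmPth^{2}) = \langle\bm{A}_h\bmE_{\t,h}^{1},\bmPth^{2}\rangle + A(\bm{\eta}_{1},\bmPth^{2})$ and splitting $\bm{\eta}_{1} = -\bm{T}_{II}^{n} + (I_\t\bmu - \bm{w}_{1})$, one uses Definition~\ref{def:specialapprox} together with $\bm{w}_{2} = I_\t(\bm{R}_h\p_t\bmu)$ to decompose
\begin{equation*}
I_\t\bmu - \bm{w}_{1} \;=\; \bm{T}_{I}^{n} \;+\; I_\t\!\!\int_{t_{n-1}}^{t} I_\t(\p_t\bmu - \bm{R}_h\p_t\bmu)\,ds \;+\; I_\t\bigl(\bmu(t_{n-1}) - \bm{R}_h\bmu(t_{n-1})\bigr).
\end{equation*}
The last two summands are, at every $t\in I_n$, linear combinations of the pointwise elliptic-projection residual $(\bmu - \bm{R}_h\bmu)$ evaluated at the Gauss--Lobatto nodes $t_{n,\mu}$ and at $t_{n-1}$; hence $A(\cdot,\bmPth^{2}(t)) = 0$ pointwise in $t$ because $\bmPth^{2}(t)\in\bm{V}_h^r$. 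This produces $A(\bm{\eta}_{1},\bmPth^{2}) = \langle\bm{A}_h\bm{T}_{I}^{n},\bmPth^{2}\rangle - \langle\bm{A}_h\bm{T}_{II}^{n},\bmPth^{2}\rangle$, which on transposition to the right-hand side furnishes the two $\bm{A}_h$-terms of \eqref{eq:errorMechanincs}. The main obstacle, if any, is recognizing that the Makridakis-type special interpolant $\bm{w}_{1}$ is engineered precisely so that the excess of $I_\t\bmu - \bm{w}_{1}$ over $\bm{T}_{I}^{n}$ lies in the $A$-orthogonal complement of the discrete test space; once that is seen, the remaining pressure-gradient, memory-coupling, and source terms in \eqref{eq:errorMechanincs} are split exactly as in the other equations and the proof concludes.
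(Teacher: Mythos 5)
Your proposal is correct and follows essentially the same route as the paper: subtract the discrete scheme from the continuous weak form, apply the error splitting \eqref{eqs:errorsplit}, reduce the source mismatch to $\bm{T}_{\bmF_1}^{n}$, $\bm{T}_{\bmF_2}^{n}$ via Gauss--Lobatto exactness, kill the velocity-equation residual by \eqref{eq:lemma:4_2}, and decompose $\langle\bm{A}_h\bm{\eta}_1,\bmPth^{2}\rangle$ into the $\bm{T}_{I}^{n}$ and $\bm{T}_{II}^{n}$ contributions using the $A$-orthogonality of the elliptic-projection residual. The only difference is that you work out explicitly the two steps the paper delegates to citations of \cite[Eqs.\ (4.15), (4.18)]{bause2024}, and your worked details are consistent with those references.
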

	\begin{proof}
		To derive the variational form in terms of the discrete errors, we first subtract \eqref{variational} from the weak form of \eqref{eqs:firstorderADE} and 
		 split the errors using \eqref{eqs:errorsplit} to obtain
		\begin{subequations}
			\begin{align}
				&\int_{I_n}\langle \p_t\bmE^{1}_{\t,h},\bmPth^{1}\rangle-\langle  \bmE^{2}_{\t,h},\bmPth^{1}\rangle dt= -\int_{I_n}\langle \p_t\bm{\eta}_{1},\bmPth^{1}\rangle-\langle  \bm{\eta}_{2},\bmPth^{1}\rangle dt,\label{eq:419a}
				\\
                \begin{split}
				&\int_{I_n}\langle \p_t\bmE^{2}_{\t,h},\bmPth^{2}\rangle+\langle \bm{A}_{h}\bmE^{1}_{\t,h},\bmPth^{2}\rangle-\a\langle e_{\t,h},\div\bmPth^{2}\rangle +\rho_f\langle \p_t\bmE^{3}_{\t,h},\bmPth^{2}\rangle dt 
				\\
				&= \int_{I_n}\langle \bmF-I_{\t}\bmF,\bmPh^{2}\rangle dt
				-\int_{I_n}\langle \p_t\bm{\eta}_{2},\bmPth^{2}\rangle+\langle \bm{A}_{h}\bm{\eta}_{1},\bmPth^{2}\rangle-\a\langle \om,\div\bmPth^{2}\rangle +\rho_f\langle \p_t\bm{\zeta},\bmPth^{2}\rangle dt,\label{eq:419b}
                \end{split}
				\\
                \begin{split}
				&\int_{I_n}c_0\langle\p_te_{\t,h},\psi_{\t,h}\rangle+\a\langle \div\p_t\bmE^{1}_{\t,h},\psi_{\t,h}\rangle+\langle\div\bmEth^{3},\psi_{\t,h}\rangle dt
                \\
				&=-\int_{I_n}c_0\langle\p_t\om,\psi_{\t,h}\rangle+\a\langle \div\p_t\bm{\eta}_{1},\psi_{\t,h}\rangle+\langle\div\bm{\zeta},\psi_{\t,h}\rangle dt,
                \end{split}
				\\
                \begin{split}
				&\int_{I_n}\rho_fc_1F\langle \p_t\bmE^{3}_{\t,h},\bmPth^{3}\rangle+\rho_fF\langle \bmE^{3}_{\t,h},\bmPth^{3}\rangle-d_1\eta_k\langle e_{\t,h},\div\bmPth^{3}\rangle
				+\rho_fd_1\eta_k\langle \p_t\bmE^{2}_{\t,h},\bmPth^{3}\rangle dt
                \\
                &\,\,=\int_{I_n}\langle T_{\bmF_2},\bmPth^{3}\rangle
				-\int_{I_n}\rho_fc_1F\langle \p_t\bm{\zeta},\bmPth^{3}\rangle+\rho_fF\langle \bm{\zeta},\bmPth^{3}\rangle-d_1\eta_k\langle \om,\div\bmPth^{3}\rangle
				+\rho_fd_1\eta_k\langle \p_t\bm{\eta}_{2},\bmPth^{3}\rangle dt.
                \end{split}
			\end{align}
		\end{subequations}
  The right-hand side of \eqref{eq:419a} also appears in \cite[Eq. (4.15)]{bause2024} and is equal to zero. We also see that the term with $\bm{A}_h$ on the right-hand side of \eqref{eq:419b} can be decomposed similarly to \cite[Eq. (4.18)]{bause2024},  thus we have
		\begin{align*}
			\langle \bm{A}_{h}\bm{\eta}_{1},\bmPth^{2}\rangle
			=
			\int_{n}\left\langle \bm{A}_h \bm{T}_{I}^{n},\bmPth^{2}\right\rangle dt-\int_{I_n}\langle\bm{A}_h\bm{T}_{II}^{n},\bmPth^{2}\rangle dt,
		\end{align*}
  where $\bm{T}_{I}^{n}$ and $\bm{T}_{II}^{n}$ is defined in \eqref{eq:Trelations}.
	\end{proof}	

    In the variational error equation \eqref{eq:errorvariational}, we have multiple problematic terms which are difficult to control and require special care in the following analysis. The first two are the terms $\mathfrak{A}_{a}^{P}$ and $\mathfrak{A}_{a}^{V}$ appearing from the coupling terms $\a\del p$ and $\a \div\bmv$, however by choosing test functions appropriately the error contributions from these terms equilibrate each other. Secondly, the terms $\mathfrak{B}_{\eta_{k}}^{P}$ and $\mathfrak{B}_{\eta_{k}}^{\Psi}$  arising from the coupling between the pressure and the auxiliary variable can also be absorbed into each other by a clever choice of test functions. The last two terms which are problematic, are the last terms of the left-hand side of \eqref{eq:errorMechanincs} and \eqref{eq:errorADE}, i.e. $\rho_f\langle \p_t\bmE^{3}_{\t,h},\bmPth^{2}\rangle $ and  $\rho_fd_1\eta_k\langle \p_t\bmE^{2}_{\t,h},\bmPth^{3}\rangle$.  To control these terms, we have chosen to estimate the velocity and ADE term simultaneously to control these terms, similarly to the stability estimate \eqref{eq:errorstabilityestimate}.
 We note that a similar idea was used for the dynamic Biot system in \cite{kraus2024}, which uses $\bm{H}$(div) conforming approximation of discontinuous Galerkin type for the displacement and a mixed finite element approach for the flux and pressure.
	First recall the matrix $\bm{N}$ defined in \eqref{eq:Nmatrix}, then by summing the error equations \eqref{eq:errorMechanincs}, \eqref{eq:errorFlow} and \eqref{eq:errorADE} we obtain
	\begin{equation}\label{eq:errorequationsadded}
		\begin{aligned}
			\int_{I_n}\llangle[\Big] \bm{N}\begin{pmatrix}
				\p_t\bmE^{2}_{\t,h}\\
				\p_t\bmE^{3}_{\t,h}
			\end{pmatrix},\begin{pmatrix}
				\bmPth^{2}\\
				\bmPth^{3}
			\end{pmatrix}\rrangle[\Big]dt+\int_{I_n}\langle \bm{A}_{h}\bmE^{1}_{\t,h},\bmPth^{2}\rangle dt-\int_{I_n}\a\langle e_{\t,h},\div\bmPth^{2}\rangle dt 
			+\int_{I_n}c_0\langle\p_te_{\t,h},\psi_{\t,h}\rangle dt
			\\
			+\int_{I_n}\a\langle \div\p_t\bmE^{1}_{\t,h},\psi_{\t,h}\rangle dt+\int_{I_n}\langle\div\bmEth^{3},\psi_{\t,h}\rangle dt
			+ \int_{I_n}\rho_fF\langle \bmE^{3}_{\t,h},\bmPth^{3}\rangle dt
			- \int_{I_n} d_1\eta_k\langle e_{\t,h},\div\bmPth^{3}\rangle dt
			\\
			= \int_{I_n}\llangle[\Big] \begin{pmatrix}
				T_{\bmF_1}\\
				T_{\bmF_2}
			\end{pmatrix},\begin{pmatrix}
			\bmPth^{2}\\
			\bmPth^{3}
			\end{pmatrix}\rrangle[\Big] dt
			-	\int_{I_n}\llangle[\Big] \bm{N}\begin{pmatrix}
				\p_t\bm{\eta}_{2}\\
				\p_t\bm{\zeta}
			\end{pmatrix},\begin{pmatrix}
				\bmPth^{2}\\
				\bmPth^{3}
			\end{pmatrix}\rrangle[\Big]dt
			-\int_{n}\left\langle \bm{A}_h \bm{T}_{I}^{n},\bmPth^{2}\right\rangle dt
			+\int_{I_n}\langle\bm{A}_h\bm{T}_{II}^{n},\bmPth^{2}\rangle dt
			\\
			+\int_{I_n}\a\langle \om,\div\bmPth^{2}\rangle dt 
			-\int_{I_n}c_0\langle\p_t\om,\psi_{\t,h}\rangle dt-\int_{I_n}\a\langle \div\p_t\bm{\eta}_{1},\psi_{\t,h}\rangle dt
			\\
			-\int_{I_n}\langle\div\bm{\zeta},\psi_{\t,h}\rangle dt
			-\int_{I_n}\rho_fF\langle \bm{\zeta},\bmPth^{3}\rangle dt
			+\int_{I_n}d_1\eta_k\langle \om,\div\bmPth^{3}\rangle dt.
		\end{aligned}
	\end{equation}
	\subsection{Estimation of the interpolation errors}

	We then need to estimate the interpolation errors of the right-hand side of \eqref{eq:errorequationsadded}.  Note that if a bold test function is missing an index, a calculation is needed to obtain the term.

	\begin{lemma}\label{lemma:estimation}
		[Estimation of interpolation errors]
		Let $k,r\geq 1$. For $T_{I},T_{II}, T_{\bmF_1},T_{\bmF_2}$ defined in \eqref{eq:Trelations} and for  $\bm{\eta}_1,\bm{\eta}_2, \om$ and $\bm{\zeta}$ defined by the error splitting \eqref{eqs:errorsplit} it holds that
		\begin{subequations}
			\begin{align}
				\left|\int_{n}\left\langle \bm{A}_h \bm{T}_{I}^{n},\bmPth^{2}\right\rangle dt\right|&\leq c\t_n^{k+2}\|\del\p_t^{k+2}\bmu\|_{L^{2}(I_{n};\bm{L}^{2})}\norm{\bmPth^{2}}_{L^{2}(I_{n};\bm{L}^{2})},\label{eq:estimation1}
				\\
				\left|\int_{n}\left\langle \bm{A}_h \bm{T}_{II}^{n},\bmPth^{2}\right\rangle dt\right|&\leq c\t_n^{k+1}\|\del\p_t^{k+1}\bmu\|_{L^{2}(I_{n};\bm{L}^{2})}\norm{\bmPth^{2}}_{L^{2}(I_{n};\bm{L}^{2})},\label{eq:estimation2}
				\\
				\left|\int_{n}\left\langle \bm{A}_h \p_t\bm{T}_{II}^{n},\bmPth\right\rangle dt\right|&\leq c\t_n^{k+1}\|\del\p_t^{k+2}\bmu\|_{L^{2}(I_{n};\bm{L}^{2})}\norm{\bmPth}_{L^{2}(I_{n};\bm{L}^{2})},\label{eq:estimation3}
				\\
				\left|\int_{I_n}\left\langle \div\p_t\bm{\eta}_1,\psi_{\t,h}\right\rangle dt\right|&\leq c\left(\t_n^{k+1}\|\p_t^{k+2}\bmu\|_{L^{2}(I_{n};\bm{H}^{1})}+h^{r}\|\p_t\bmu\|_{L^{2}(I_{n};\bm{H}^{r+1})}
				\right)\|\psi_{\t,h}\|_{L^{2}(I_{n};L^{2})},\label{eq:estimation4}
				\\
				\left|\int_{I_n}\langle \p_t\om,\psi_{\t,h}\rangle dt\right|&\leq c\left(\t_n^{k+1}\|\p_t^{k+2}p\|_{L^{2}(I_{n};L^{2})}+h^{r+1}\|\p_tp\|_{L^{2}(I_{n};H^{r+1})}
				\right)\|\psi_{\t,h}\|_{L^{2}(I_{n};L^{2})},\label{eq:estimation5}
				\\
				\left|\int_{I_n}\langle \del\om,\bmPth^{3}\rangle dt\right|&\leq c\left(\t_{n}^{k+1}\|\p_{t}^{k+1}p\|_{L^{2}(I_{n};H^{1})}+h^{r}\right(\|p\|_{L^{2}(I_{n};H^{r+1})}+\t_n \|\p_tp\|_{L^{2}(I_{n};H^{r+1})}\left)\right)
				\left\|\bmPth^{3}\right\|_{L^{2}(I_n;\bm{L}^{2})},\label{eq:estimation8}
				\\
				\left|\int_{I_n}\langle \div\bm{\zeta},\psi_{\t,h}\rangle dt\right|&\leq c \left(\t_{n}^{k+1}\|\p_t^{k+1}\Psi\|_{L^{2}(I_{n};\bm{H}^{1})}+h^{r}\left(\left\|  \Psi \right\|_{L^{2}(I_n;\bm{H}^{r+1})}+\t_n\left\|  \p_t\Psi \right\|_{L^{2}(I_n;\bm{H}^{r+1})}\right)\right)\|\psi_{\t,h}\|_{L^{2}(I_{n};L^{2})},\label{eq:estimationofDivZeta}
				\\
				\left|\int_{I_n}\langle \bm{\zeta},\bmPth^{3}\rangle dt\right|&\leq c\left(\t_{n}^{k+1}\|\p_t^{k+1}\Psi\|_{L^{2}(I_{n};\bm{L}^{2})}+h^{r+1}\left(\left\|  \Psi \right\|_{L^{2}(I_n;\bm{H}^{r+1})}+\t_n\left\|  \p_t\Psi \right\|_{L^{2}(I_n;\bm{H}^{r+1})}\right)\right)\|\bmPth^{3}\|_{L^{2}(I_{n};\bm{L}^{2})},\label{eq:estimationofZeta}
				\\
				\int_{I_n}\llangle[\Big] \bm{N}\begin{pmatrix}
					\p_t\bm{\eta}_{2}\\
					\p_t\bm{\zeta}
				\end{pmatrix},\begin{pmatrix}
					\bmPth^{2}\\
					\bmPth^{3}
				\end{pmatrix}\rrangle[\Big]dt
				&\leq c\Big(\t_{n}^{k+1}\left(\|\p_t^{k+2}\Psi\|_{L^{2}(I_{n};\bm{L}^{2})}+\|\p_t^{k+2}\bmv\|_{L^{2}(I_{n};\bm{L}^{2})}\right)\nonumber
				\\
				&\quad+h^{r+1}\left(\|\p_t\Psi\|_{L^{2}(I_{n};\bm{H}^{r+1})}+\|\p_t\bmv\|_{L^{2}(I_{n};\bm{H}^{r+1})}\right)\Big)\|\bm{N}^{\frac12}\begin{pmatrix}
					\bmPth^{2}\\
					\bmPth^{3}
				\end{pmatrix}\|_{L^{2}(I_{n};\bm{L}^{2})},\label{eq:estimationmatrix}
                \\
                \,\int_{I_n}\llangle[\Big] \begin{pmatrix}
				T_{\bmF_1}\\
				T_{\bmF_2}
			\end{pmatrix},\begin{pmatrix}
			\bmPth^{2}\\
			\bmPth^{3}
			\end{pmatrix}\rrangle[\Big] dt&\leq c\t_n^{k+1}\|\begin{pmatrix}
			    \rho\p_t^{k+1}\bmF\\\rho_f\p_t^{k+1}\bmF
			\end{pmatrix}\|_{L^{2}(I_n;\bm{L}^{2})}\|\begin{pmatrix}
					\bmPth^{2}\\
					\bmPth^{3}
				\end{pmatrix}\|_{L^{2}(I_{n};\bm{L}^{2})},\label{eq:estimationf}
			\end{align}
		\end{subequations}
	\end{lemma}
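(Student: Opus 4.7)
Each of the nine inequalities will be established independently by the same mechanism: apply Cauchy--Schwarz in space and time, and then bound the factor not containing the test function using a combination of the Lagrange interpolation estimate \eqref{eq:lagrangestability}, the stability bounds \eqref{eqs:stability}, the elliptic projection estimates \eqref{eq:ellipticprojectionerrorScalar}--\eqref{eq:ellipticprojectionerrorVector}, and the splittings of \Cref{lemma:splittingestimate}. For the three inequalities involving the discrete elasticity operator $\bm{A}_h$, the starting point is its boundedness $|\langle \bm{A}_h \bmw,\bmph\rangle| \leq c\,\norm{\bmw}\,\norm{\bmph}$, which follows from Cauchy--Schwarz applied to $A(\bmw,\bmph) = \langle\bm{C}\e(\bmw),\e(\bmph)\rangle$ together with the boundedness of $\bm{C}$ and Korn's inequality.

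\textbf{Estimates \eqref{eq:estimation1}--\eqref{eq:estimation3}.} For \eqref{eq:estimation2}, write $\bm{T}_{II}^{n}=I_{\t}\bmu-\bmu$, apply Cauchy--Schwarz in time together with the boundedness of $\bm{A}_h$, and invoke \eqref{eq:lagrangestability} with $m=1$ to bound $\|\del(I_\t\bmu-\bmu)\|_{L^2(I_n;\bm{L}^2)}$ by $c\t_n^{k+1}\|\del\p_t^{k+1}\bmu\|_{L^2(I_n;\bm{L}^2)}$. Estimate \eqref{eq:estimation3} follows by the same argument applied to $\p_t\bm{T}_{II}^n$. For \eqref{eq:estimation1}, the key is the nested structure $\bm{T}_I^n = I_\t \int_{t_{n-1}}^t (\p_t\bmu - I_\t\p_t\bmu)\,ds$: first use stability \eqref{eq:stabilitya} of $I_\t$, then the integral stability \eqref{eq:stabilityb} (yielding an extra factor $\t_n$), and finally \eqref{eq:lagrangestability} applied to $\p_t\bmu - I_\t\p_t\bmu$ (yielding $\t_n^{k+1}$), producing the expected order $\t_n^{k+2}$.

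\textbf{Interpolation error bounds and the combined matrix estimate.} Estimates \eqref{eq:estimation4}, \eqref{eq:estimation5}, \eqref{eq:estimation8}, \eqref{eq:estimationofDivZeta} and \eqref{eq:estimationofZeta} follow from Cauchy--Schwarz combined with \Cref{lemma:splittingestimate}. For divergence-type terms such as $\int_{I_n}\langle\div\bm{\zeta},\psi_{\t,h}\rangle\,dt$, I would bound $|\langle\div\bm{\zeta},\psi_{\t,h}\rangle| \leq \|\del\bm{\zeta}\|\,\|\psi_{\t,h}\|$ and estimate $\|\del\bm{\zeta}\|_{L^2(I_n;\bm{L}^2)}$ along the lines of the proof of \eqref{eq:splitting4}, but with the $H^1$-seminorm version of \eqref{eq:ellipticprojectionerrorVector}, at the cost of one order of $h$. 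For \eqref{eq:estimationmatrix}, use the factorization $\bm{N} = \bm{N}^{1/2}\bm{N}^{1/2}$ (valid since $\bm{N}$ is symmetric positive definite under the physically natural condition $\rho\, c_1 F > \rho_f\, d_1 \eta_k$), apply Cauchy--Schwarz, bound $\|\bm{N}^{1/2}\|$ by a constant, and conclude with \Cref{lemma:splittingestimate} applied to $\p_t\bm{\eta}_2$ and $\p_t\bm{\zeta}$. Estimate \eqref{eq:estimationf} is immediate from \eqref{eq:lagrangestability} applied to $\bmF - I_\t\bmF$.

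\textbf{Main obstacle.} The principal technical difficulty lies in the time-derivative inequalities, especially in \eqref{eq:estimation4}, which hinges on controlling $\|\del\p_t\bm{\eta}_1\|_{L^2(I_n;\bm{L}^2)}$. Because $\bm{w}_1$ is defined as $I_\t$ applied to an integrated quantity, the identity $\p_t \bm{w}_1 = I_\t \bm{R}_h \p_t\bmu$ only holds at the Gauss nodes (cf.\ \Cref{lemma:4_3}) and not pointwise in time. I would therefore introduce the intermediate decomposition
\begin{equation*}
\p_t\bm{\eta}_1 = (\p_t\bmu - \bm{R}_h\p_t\bmu) + (\bm{R}_h\p_t\bmu - I_\t\bm{R}_h\p_t\bmu) + (I_\t\bm{R}_h\p_t\bmu - \p_t\bm{w}_1),
\end{equation*}
handling the first piece by \eqref{eq:ellipticprojectionerrorVector} at the $H^1$-level, the second by \eqref{eq:lagrangestability}, and the third by using the stability \eqref{eq:stabilitya} together with the inverse inequality \eqref{eq:HoneLtwoInverseIneq} to transfer from $L^2$-in-time to $L^\infty$-in-time control. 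A similar care is needed for the $H^1$-seminorm splitting in \eqref{eq:estimationofDivZeta}.
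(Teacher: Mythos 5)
Most of your proposal tracks the paper's argument. The first three bounds are obtained as you describe (via the Makridakis-type estimates, with the nested use of the $I_\t$-stability and the integral stability \eqref{eq:stabilityb} explaining the extra power $\t_n^{k+2}$ in \eqref{eq:estimation1}); the bounds \eqref{eq:estimation8} and \eqref{eq:estimationofDivZeta} are proved in the paper exactly by your suggested route, namely the splitting $\om=(p-I_\t p)+I_\t(p-R_hp)$ with the Lagrange interpolation estimate for the first piece and the $I_\t$-stability \eqref{eq:stabilitya} plus the $H^1$-level elliptic projection bound for the second, which is where the loss of one power of $h$ occurs; and \eqref{eq:estimationf} is indeed immediate from \eqref{eq:lagrangestability}.

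There is, however, a genuine gap in your treatment of \eqref{eq:estimationmatrix}. You propose plain Cauchy--Schwarz followed by ``\Cref{lemma:splittingestimate} applied to $\p_t\bm{\eta}_2$ and $\p_t\bm{\zeta}$'', but that lemma bounds $\|\bm{\eta}_2\|$ and $\|\bm{\zeta}\|$, not their time derivatives, and the direct interpolation estimate for the derivative, $\|\p_t(\bmv-I_\t\bmv)\|_{L^2(I_n;\bm{L}^2)}\le c\,\t_n^{k}\|\p_t^{k+1}\bmv\|_{L^2(I_n;\bm{L}^2)}$, is one power of $\t_n$ short of the claimed $\t_n^{k+1}$. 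The stated rate is a superconvergence effect that exploits the fact that $\bmPth^{2},\bmPth^{3}$ have degree only $k-1$: the paper integrates by parts in time (the boundary terms vanish because the interpolation nodes contain the endpoints of $I_n$), replaces $I_\t$ by an enriched interpolant $I_{\t}^{k+1}$ on $k+2$ nodes without changing the integral (the integrand has degree at most $2k-1$), and only then undoes the integration by parts, gaining the extra factor of $\t_n$ together with the higher derivative $\p_t^{k+2}$; for $k=1$ the term vanishes outright since $\p_tI_\t\bmv$ is constant in time. Similarly, for the spatial contribution $\p_tI_\t(\bmv-\bm{R}_h\bmv)$ a direct application of the inverse inequality \eqref{eq:HoneLtwoInverseIneq} produces a spurious factor $\t_n^{-1}$; the paper removes it by subtracting the constant-in-time value $\bm{\xi}_2(t_{n-1}^{+})$ (which is annihilated by $\p_tI_\t$), rewriting the remainder as $\p_tI_\t\int_{t_{n-1}}^{t}\p_t\bm{\xi}_2\,ds$ and invoking \eqref{eq:stabilityb} to recover a compensating $\t_n$. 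Neither device appears in your proposal, and without them the bound \eqref{eq:estimationmatrix} --- and hence the optimal temporal order in the main theorem --- is not reachable. Your intermediate decomposition for \eqref{eq:estimation4} is sensible, but the same caution applies to its $\p_t(\,\cdot-I_\t(\cdot))$ piece.
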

	\begin{proof}
		The first three inequalities \eqref{eq:estimation1} to \eqref{eq:estimation3} can be shown similarly to \cite[Lem. 3.3]{makridakis2005} while also using the norm equivalence \eqref{eq:norm}. Do note that for \eqref{eq:estimation1} the order is of $\t_n^{k+2}$ because the $H^{1}-L^{2}$ inverse inequality is not used, and therefore the derivative order is also higher. The next two inequalities, \eqref{eq:estimation4} and \eqref{eq:estimation5}, can be shown along similar lines as \cite[Lem. 4.3, Eqs. (4.21d) and (4.21e)]{bause2024}.  First, we prove \eqref{eq:estimation8}. Using the definition of $\om$ from \eqref{eq:pressureerrorsplit} while adding and subtracting $I_{\t}p$ we get
		\begin{align}\label{eq:loss}
			\int_{I_n}\langle \del\om,\bmPth^{3}\rangle dt= \underbrace{\int_{I_n}\langle \del(p-I_{\t}p),\bmPth^{3}\rangle dt}_{\Gamma_{1}}+\underbrace{\int_{I_n}\langle \del I_{\t}(p-R_h p),\bmPth^{3}\rangle dt}_{\Gamma_{2}}.
		\end{align}
		First we consider $\Gamma_{1}$. By using that the Lagrange interpolation satisfies \eqref{eq:lagrangestability}, along with the Cauchy-Schwarz inequality and \eqref{eq:ellipticprojectionerrorScalar}, we obtain
        \begin{equation*}
			\begin{aligned}
			|\Gamma_{1}|\leq& \,\left|\int_{I_n}\langle \del(p-I_{\t}p),\bmPth^{3}\rangle dt\right|\leq \|(p-I_{\t}p)\|_{L^{2}(I_{n};H^{1})}\|\bmPth^{3}\|_{L^{2}(I_{n};\bm{L}^{2})}
			\\
			\leq& \,c\t_{n}^{k+1}\|\p_t^{k+1}p\|_{L^{2}(I_n;H^{1})}\|\bmPth^{3}\|_{L^{2}(I_{n};\bm{L}^{2})}.
			\end{aligned}
		\end{equation*}
         
		Next we consider $\Gamma_{2}$ in \eqref{eq:loss}. To ease notation, we let $\xi=p-R_hp$. 
        By the stability result \eqref{eq:stabilitya} and the error estimate for the elliptic projection operator \eqref{eq:ellipticprojectionerrorScalar}, we obtain the inequality
		\begin{equation*}
			\begin{aligned}
				|\Gamma_{2}|&=\left|\int_{I_n}\langle \del I_{\t}\xi,\bmPth^{3}\rangle dt\right|\leq \|  \del I_{\t}\xi \|_{L^{2}(I_n;L^{2})}\left\|\bmPth^{3}\right\|_{L^{2}(I_n;\bm{L}^{2})}\\
				 &\overset{\eqref{eq:stabilitya}}{\leq}c\left(\|\del\xi\|_{L^{2}(I_n;L^{2})}+\t_n\|\del\p_t\xi\|_{L^{2}(I_n;L^{2})}\right)\left\|\bmPth^{3}\right\|_{L^{2}(I_n;\bm{L}^{2})}
				 \\
				 &\leq\,ch^{r}(\left\|  p \right\|_{L^{2}(I_n;H^{r+1})}+\t_n\left\|  \p_tp \right\|_{L^{2}(I_n;H^{r+1})}\left\|\bmPth^{3}\right\|_{L^{2}(I_n;\bm{L}^{2})},
			\end{aligned}
		\end{equation*}
		from which we can infer \eqref{eq:estimation8}.The proof of \eqref{eq:estimationofDivZeta} follows analogously using \eqref{eq:ellipticprojectionerrorVector}. \eqref{eq:estimationofZeta} is a direct estimation using similar arguments.
			Lastly, we consider \eqref{eq:estimationmatrix}. Note that from \eqref{eqs:errorsplit} we have
		\begin{equation*}
			\bm{\zeta}=\Psi- I_{\t}\Psi +I_{\t}\Psi-I_{\t}\bm{R}_{h}\Psi\mbox{ and }\bm{\eta}_{2}=\bmv-I_{\t}\bmv+I_{\t}\bmv-\bmw_{2}.
		\end{equation*}
		Using this gives
		\begin{align*}
			\int_{I_n}\llangle[\Big] \bm{N}\begin{pmatrix}
				\p_t\bm{\eta}_{2}\\
				\p_t\bm{\zeta}
			\end{pmatrix},\begin{pmatrix}
				\bmPth^{2}\\
				\bmPth^{3}
			\end{pmatrix}\rrangle[\Big]dt=	\underbrace{\int_{I_n}\llangle[\Big] \bm{N}\begin{pmatrix}
					\p_t(\bmv-I_{\t}\bmv)\\
					\p_t(\Psi- I_{\t}\Psi)
				\end{pmatrix},\begin{pmatrix}
					\bmPth^{2}\\
					\bmPth^{3}
				\end{pmatrix}\rrangle[\Big]dt}_{:=\Gamma_{3}}
			+	
			\underbrace{\int_{I_n}\llangle[\Big] \bm{N}\begin{pmatrix}
					\p_t(I_{\t}\bmv-\bmw_{2} )\\
					\p_t(I_{\t}\Psi-I_{\t}\bm{R}_{h}\Psi)
				\end{pmatrix},\begin{pmatrix}
					\bmPth^{2}\\
					\bmPth^{3}
				\end{pmatrix}\rrangle[\Big]dt}_{:=\Gamma_4}.
		\end{align*}
		Let us first consider $\Gamma_{3}$ for $k\geq 2$. Recall that the endpoints of $I_n$ are included in the set of Gauss-Lobatto quadrature points of $I_n$. Therefore, by applying integration by parts in time, we obtain
		\begin{align*}
			\Gamma_{3}=\int_{I_n}\llangle[\Big] \bm{N}\begin{pmatrix}
				\p_t(\bmv-I_{\t}\bmv)\\
				\p_t(\Psi- I_{\t}\Psi)
			\end{pmatrix},\begin{pmatrix}
				\bmPth^{2}\\
				\bmPth^{3}
			\end{pmatrix}\rrangle[\Big]dt =- \int_{I_n}\llangle[\Big] \bm{N}\begin{pmatrix}
				(\bmv-I_{\t}\bmv)\\
				(\Psi- I_{\t}\Psi)
			\end{pmatrix},\begin{pmatrix}
				\p_t\bmPth^{2}\\
				\p_t\bmPth^{3}
			\end{pmatrix}\rrangle[\Big]dt.
		\end{align*}
		Now we denote the Lagrange interpolation operator at $k+2$ points of the interval $\widebar{I}_{n}=[t_{n-1},t_{n}]$ which consists the $k+1$ Gauss-Lobatto quadrature nodes $t_{n,\mu}$, for $\mu=0,...,k$ and a further node in $(t_{n-1},t_{n})$ that is distinct from the previous ones by $I_{n}^{k+1}$. Then, $(I_{\t}^{k+1}\bmv)\p_{t}\bmPth$ and $(I_{\t}^{k+1}\Psi)\p_{t}\bmPth$ is polynomials of degree $2k-1$ in $t$ such that
		\begin{align*}
			\int_{I_n}\llangle[\Big] \bm{N}\begin{pmatrix}
				(\bmv-I_{\t}\bmv)\\
				(\Psi- I_{\t}\Psi)
			\end{pmatrix},\begin{pmatrix}
				\p_t\bmPth^{2}\\
				\p_t\bmPth^{3}
			\end{pmatrix}\rrangle[\Big]dt=\int_{I_n}\llangle[\Big] \bm{N}\begin{pmatrix}
				(\bmv-I_{\t}^{k+1}\bmv)\\
				(\Psi- I_{\t}^{k+1}\Psi)
			\end{pmatrix},\begin{pmatrix}
				\p_t\bmPth^{2}\\
				\p_t\bmPth^{3}
			\end{pmatrix}\rrangle[\Big]dt.
		\end{align*}
		Further, by integration by parts and the stability of the operator $I_{\t}^{k+1}$ in the norm of $L^{2}(I_n;L^{2})$ and the symmetric positive definiteness of $\bm{N}$, we get
		\begin{equation}
			\begin{aligned}
				|\Gamma_{3}|&\leq \int_{I_n}\llangle[\Big] \bm{N}\begin{pmatrix}
					\p_t(\bmv-I_{\t}^{k+1}\bmv)\\
					\p_t(\Psi- I_{\t}^{k+1}\Psi)
				\end{pmatrix},\begin{pmatrix}
					\bmPth^{2}\\
					\bmPth^{3}
				\end{pmatrix}\rrangle[\Big]dt\\
				&\leq c\t_{n}^{k+1}\left(\|\p_t^{k+2}\Psi\|_{L^{2}(I_{n};\bm{L}^{2})}+\|\p_t^{k+2}\bmv\|_{L^{2}(I_{n};\bm{L}^{2})}\right)\|\bm{N}^{\frac12}\begin{pmatrix}
					\bmPth^{2}\\
					\bmPth^{3}
				\end{pmatrix}\|_{L^{2}(I_{n};\bm{L}^{2})}.
			\end{aligned}
		\end{equation}
		For $k=1$, we have that $\p_tI_\t\bmv, \p_tI_\t\Psi\,\in \mathbb{P}_0(I_n;V^{r}_h)$ where $\p_tI_\t\bmv=(\bmv(t_n)-\bmv(t_{n-1}))/\t_n$ and $\p_tI_\t\Psi=(\Psi(t_n)-\Psi(t_{n-1}))/\t_n$. Consequently, it follows that $\Gamma_3=0$. Next, we need to estimate $\Gamma_{4}$. Recall from \eqref{eq:special approximation} that $\bmw_2=I_{\t}(\bm{R}_h\bmv)$.	Let $\bm{\xi}_2=\bmv-\bmR_h\bmv$ and $\bm{\xi}_3=\Psi-\bmR_h\Psi$
		then by viewing $\bm{\xi}_i(t_{n-1}^{+})$ as a function constant in time for $i\in\{2,3\}$ we get
		\begin{equation*}
			\begin{aligned}
			|\Gamma_{4}|=&\,\left|\int_{I_n}\llangle[\Big] \bm{N}\begin{pmatrix}
				\p_t(I_{\t}\bm{\xi}_2 )\\
				\p_t(I_{\t}\bm{\xi}_3)
			\end{pmatrix},\begin{pmatrix}
				\bmPth^{2}\\
				\bmPth^{3}
			\end{pmatrix}\rrangle[\Big]dt\right|=\left|\int_{I_n}\llangle[\Big] \bm{N}\begin{pmatrix}
			\p_t(I_{\t}(\bm{\xi}_2-\bm{\xi}_2(t_{n-1}^{+})) )\\
			\p_t(I_{\t}(\bm{\xi}_3-\bm{\xi}_3(t_{n-1}^{+})))
			\end{pmatrix},\begin{pmatrix}
			\bmPth^{2}\\
			\bmPth^{3}
			\end{pmatrix}\rrangle[\Big]dt\right|
			\\
			=&\,\left|\int_{I_n}\llangle[\Big] \bm{N}\begin{pmatrix}
				\p_t I_{\t}\int_{t_{n-1}}^{t}\bm{\xi}_2 ds\\
				\p_t I_{\t}\int_{t_{n-1}}^{t}\bm{\xi}_3 ds
			\end{pmatrix},\begin{pmatrix}
				\bmPth^{2}\\
				\bmPth^{3}
			\end{pmatrix}\rrangle[\Big]dt\right|.
		\end{aligned}
		\end{equation*} 
		Then, we obtain the inequality 
		\begin{equation*}
			\begin{aligned}
				|\Gamma_4|\leq  c\left(\|\p_t I_{\t}\int_{t_{n-1}}^{t}\bm{\xi}_2 ds\|_{L^{2}(I_{n};\bm{L}^{2})}+\|\p_t I_{\t}\int_{t_{n-1}}^{t}\bm{\xi}_3 ds\|_{L^{2}(I_{n};\bm{L}^{2})}\right)\|\bm{N}^{\frac12}\begin{pmatrix}
					\bmPth^{2}\\
					\bmPth^{3}
				\end{pmatrix}\|_{L^{2}(I_{n};\bm{L}^{2})}.
			\end{aligned}
		\end{equation*}
		Both terms can be estimated similarly, and therefore, we only show one.
		First, we apply the  
		$H^{1}-L^{2}$ inverse inequality \eqref{eq:HoneLtwoInverseIneq} along with the stability results \eqref{eqs:stability}. Then we recall that $\bm{\xi}_2(t_{n-1}^{+})$ is a constant function in time, and lastly we use the error estimate for the elliptic projection operator \eqref{eq:ellipticprojectionerrorVector} to obtain the inequality
		\begin{align*}
			\|\p_t I_{\t}\int_{t_{n-1}}^{t}\bm{\xi}_2 ds\|_{L^{2}(I_{n};\bm{L}^{2})}&\overset{\eqref{eq:HoneLtwoInverseIneq}}{\leq} c\t_{n}^{-1}\left\|  I_{\t}\int_{t_{n-1}}^{t}\p_t\bm{\xi}_2 ds\right\|_{L^{2}(I_n;\bm{L}^{2})}
			\\
			&\overset{\eqref{eq:stabilitya}}{\leq}c\left(\t_{n}^{-1}\left\|  \int_{t_{n-1}}^{t}\p_t\bm{\xi}_2 ds\right\|_{L^{2}(I_n;\bm{L}^{2})}+\left\|  \p_t\int_{t_{n-1}}^{t}\p_t\bm{\xi}_2 ds\right\|_{L^{2}(I_n;\bm{L}^{2})}\right)
			\\
			&\overset{\eqref{eq:stabilityb}}{\leq}c\left\|  \p_t\bm{\xi}_2 \right\|_{L^{2}(I_n;\bm{L}^{2})}
			\\&\overset{\eqref{eq:ellipticprojectionerrorVector}}{\leq}
			ch^{r+1}\left\|  \p_t\bmv \right\|_{L^{2}(I_n;\bm{H}^{r+1})}.
		\end{align*}
		From these inequalities, we can infer \eqref{eq:estimationmatrix}. The estimate \eqref{eq:estimationf} follows from \eqref{eq:lagrangestability}.
		
	\end{proof}
	
	\begin{remark}
	    In \cite{bauseoptimal2024}, the approximation of the dynamic Biot model without memory effects is investigated. An equal-order in space finite element approach is considered. For this model, the estimate \eqref{eq:estimation4} can be sharpened. We note that in our context, this improvement is not possible due to the lack of a second-order space diffusion term in the pressure equation. In the error estimation, its presence would enable the absorption of suboptimal order contributions to the upper error bound. 
	\end{remark}

	\subsection{Estimation of \texorpdfstring{$\bmE_{\t,h}^{1},\bmE_{\t,h}^{2},\bmEth^{3},$}{TEXT} and \texorpdfstring{$e_{\t,h}$}{TEXT}}
	Before we show a stability estimate for the errors, we introduce the following notation with \Cref{lemma:estimation} in mind,
    \begin{equation}\label{eq:notationsummation}
    \begin{aligned}
		\varepsilon_{t}^{n,1}&:=\|\del\p_t^{k+2}\bmu\|_{L^{2}(I_{n};\bm{L}^{2})}+\|\p_t^{k+3}\bmu\|_{L^{2}(I_{n};\bm{L}^{2})}+\|\p_t^{k+1}p\|_{L^{2}(I_{n};H^{1})}+\|\p_t^{k+1}\Psi\|_{L^{2}(I_{n};\bm{H}^{1})}+\|\p_t^{k+2}\bmu\|_{L^{2}(I_{n};\bm{H}^{1})}
        \\
        &\quad+\|\p_t^{k+2}p\|_{L^{2}(I_{n};L^{2})}+\|\p_t^{k+2}\Psi\|_{L^{2}(I_{n};\bm{L}^{2})}+\|\p_t^{k+1}\Psi\|_{L^{2}(I_{n};\bm{L}^{2})}+\varepsilon_{\bmF}^{n}
        +\|\p_t^{k+2}\bmu\|_{L^{2}(I_n;\bm{H}^{1})}\\
        &\quad+\|\p_t^{k+2}\bmu\|_{L^{\infty}(I_n;\bm{H}^{1})}+\|\p_t^{k+1}\bmu\|_{L^{\infty}(I_n;\bm{H}^{1})}
        ,\\
		\varepsilon_{x}^{n,0}&:= \|\p_t\bmu\|_{L^{2}(I_{n};\bm{H}^{r+1})}
        +\|p\|_{L^{2}(I_{n};H^{r+1})}
        +\|\p_t\Psi\|_{L^{2}(I_{n};\bm{H}^{r+1})}+h \varepsilon_{x}^{n,1}+h\|p\|_{L^{\infty}(I_n;H^{r+1})}\\
        &\quad+\t_n\left\|  \p_t\Psi \right\|_{L^{2}(I_n;\bm{H}^{r+1})}+\t_n\left\|  \p_t p \right\|_{L^{2}(I_n;\bm{H}^{r+1})},
        \\
		\varepsilon_{x}^{n,1}&:=\|\p_t p\|_{L^{2}(I_{n};H^{r+1})}
        +\|\Psi\|_{L^{2}(I_{n};\bm{H}^{r+1})}
        +\|\p_t\Psi\|_{L^{2}(I_{n};\bm{H}^{r+1})}
        +\|\p_t^{2}\bmu\|_{L^{2}(I_{n};\bm{H}^{r+1})}+\t_n\left\|  \p_t\Psi \right\|_{L^{2}(I_n;\bm{H}^{r+1})}, 
        \\
        \varepsilon_{\bmF}^{n}&:=\|\begin{pmatrix}\rho\p_t^{k+1}\bmF\\\rho_f\p_t^{k+1}\bmF\end{pmatrix}\|_{L^{2}(I_{n};\bm{L}^{2})}.
		\end{aligned}
        \end{equation}
        Here,  $\|\p_t^{k+2}\bmu\|_{L^{2}(I_n;\bm{H}^{1})}, \|\p_t^{k+2}\bmu\|_{L^{\infty}(I_n;\bm{H}^{1})}, \|\p_t^{k+1}\bmu\|_{L^{\infty}(I_n;\bm{H}^{1})}$ and $\|p\|_{L^{\infty}(I_n;H^{r+1})}$ are included in the anticipation of the terms $\varepsilon_{t}^{n-1,2}, \varepsilon_{t}^{n-1,3}, \varepsilon_{t}^{n-1,4}$ and $\varepsilon_{x}^{n,2}$ which appears in \Cref{lemma:estimation2}, \Cref{lemma:delta1} and \Cref{lemma:newname}.
	\begin{theorem}[Stability estimate]\label{theorem:stability}
		 For $n=1,...,N$, where the errors $\bmE_{\t,h}^{1}, \bmE_{\t,h}^{2}, e_{\t,h}$ and $\bmEth^{3}$ is defined by \eqref{eqs:errorsplit}, 
		it holds that
		\begin{equation}\label{eq:stabilityestimate}
			\begin{aligned}
				&\norm{\bmE_{\t,h}^{1}(t_{n})}_e+\frac12\left\|\bm{N}^{\frac12}\begin{pmatrix}
					\bmE^{2}_{\t,h}(t_n)\\\bmEth^{3}(t_n)
				\end{pmatrix}\right\|_{\bm{L}^{2}}^{2}+\frac{c_0}{2}\|e_{\t,h}(t_n)\|_{\bm{L}^{2}}^{2}	+C_{\e}\|\bmEth^{3}\|_{L^{2}(I_n:\bm{L}^{2})}^{2}
				\\
				&
				\leq\norm{\bmE_{\t,h}^{1}(t_{n-1}^{+})}_e+\frac{c_0}{2}\|e_{\t,h}(t_{n-1}^{+})\|_{L^{2}}^{2}+\frac12\left\|\bm{N}^{\frac12}\begin{pmatrix}
					\bmE^{2}_{\t,h}(t_{n-1}^{+})\\\bmEth^{3}(t_{n-1}^{+})
				\end{pmatrix}\right\|_{\bm{L}^{2}}^{2}+\delta_{1,n}-\delta_{1,n-1}^{+}+\delta_{2,n}-\delta_{2,n-1}^{+}
				\\
				&\quad+c\norm{\bmE_{\t,h}^{1}}^{2}_{L^{2}(I_n;\bm{L}^{2})}
				+c\|e_{\t,h}\|_{L^{2}(I_{n};L^{2})}^{2}+\e\|\bm{N}^{\frac12}\begin{pmatrix}
					\bmE_{\t,h}^{2}\\
					\bmEth^{3}
				\end{pmatrix}\|_{L^{2}(I_{n};\bm{L}^{2})}^{2}+c\t_{n}^{2(k+1)}(\varepsilon_{t}^{n,1})^{2}+ch^{2(r)}(\varepsilon_{x}^{n,0})^{2},
			\end{aligned}
		\end{equation}
		where $\varepsilon_{t}^{n,1}$ and $\varepsilon_{x}^{n,0}$ is defined in \eqref{eq:notationsummation}
		and 
		\begin{subequations}\label{eqs:deltasinTheorem}
		\begin{align}
			\delta_{1,n}&:=\langle A_hT_{II}(t_n),\bmE_{\t,h}^{1}(t_n)\rangle,\quad&&\delta_{1,n-1}^{+}:=\langle A_hT_{II}\left(t_{n-1}^{+}\right),\bmE_{\t,h}^{1}\left(t_{n-1}^{+}\right)\rangle,
			\\
			\delta_{2,n}&:= \a\langle\om(t_n),\div\bmE_{\t,h}^{1}(t_n)\rangle,\quad&&\delta_{2,n-1}^{+}:=\a\langle \om(t_{n-1}^{+}),\div\bmE_{\t,h}^{1}(t_{n-1}^{+})\rangle.
		\end{align}
	\end{subequations}
	\end{theorem}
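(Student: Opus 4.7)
The overall plan is to test the combined error equation \eqref{eq:errorequationsadded} with a carefully chosen triple of $\P_{k-1}$ test functions that mimics the continuous stability choice leading to \eqref{eq:errorstabilityestimate}. Concretely, I would take $\bmPth^{2} = \Pi_{\t}^{k-1}\bmEth^{2}$, $\psi_{\t,h} = \Pi_{\t}^{k-1}e_{\t,h}$, and $\bmPth^{3} = (d_1\eta_k)^{-1}\Pi_{\t}^{k-1}\bmEth^{3}$. The role of the $\Pi_{\t}^{k-1}$ projection is twofold: it produces admissible test functions in the correct polynomial space, and by \Cref{lemma:21} it agrees with its argument at the $k$ Gauss nodes in $I_n$. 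A crucial preliminary observation is that \eqref{eq:errorVelocity} together with \Cref{lemma:4_3} gives $\p_t\bmEth^{1}(\glp^G) = \bmEth^{2}(\glp^G)$; since both $\p_t\bmEth^{1}$ and $\Pi_{\t}^{k-1}\bmEth^{2}$ lie in $\P_{k-1}(I_n;\bm{V}_h^{r})$ and coincide at $k$ Gauss points, they are equal as functions. Hence $\bmPth^{2} = \p_t\bmEth^{1}$, which is the essential identification that couples the $\bmEth^{1}$-control with the $\bmEth^{2}$-test.

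With these substitutions, the left-hand side of \eqref{eq:errorequationsadded} unfolds by repeatedly invoking the fact that the $k$-point Gauss rule is exact on $\P_{2k-1}$. The $\bm{N}$-weighted time-derivative term becomes $\tfrac12\|\bm{N}^{1/2}(\bmEth^{2},\bmEth^{3})^{T}(t_n)\|^{2} - \tfrac12\|\bm{N}^{1/2}(\bmEth^{2},\bmEth^{3})^{T}(t_{n-1}^{+})\|^{2}$; the pressure time derivative gives $\tfrac{c_0}{2}(\|e_{\t,h}(t_n)\|^{2} - \|e_{\t,h}(t_{n-1}^{+})\|^{2})$; and the elastic coupling is $\int_{I_n}\langle\bm{A}_h\bmEth^{1}, \p_t\bmEth^{1}\rangle\,dt = \norm{\bmEth^{1}(t_n)}_{e}^{2} - \norm{\bmEth^{1}(t_{n-1}^{+})}_{e}^{2}$. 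The terms $\mathfrak{A}_{\a}^{V}$ and $\mathfrak{A}_{\a}^{P}$ cancel once one uses $\bmPth^{2} = \p_t\bmEth^{1}$ and Gauss exactness on the pressure-test side; similarly, $\mathfrak{B}_{\eta_k}^{P}$ and $\mathfrak{B}_{\eta_k}^{\Psi}$ cancel because $\div$ commutes with $\Pi_{\t}^{k-1}$ and both integrals reduce to the same Gauss-quadrature sum. Finally, the reaction term $\tfrac{\rho_f F}{d_1\eta_k}\int_{I_n}\langle\bmEth^{3},\Pi_{\t}^{k-1}\bmEth^{3}\rangle\,dt$ becomes $\tfrac{\rho_f F}{d_1\eta_k}Q_n^{G}\|\bmEth^{3}\|^{2}$ by Gauss exactness on $\P_{2k-1}$; by norm equivalence on the finite-dimensional space $\P_{k}(I_n;\bm{V}_h^{r})$ this is bounded below by a constant $C_{\e}\|\bmEth^{3}\|_{L^{2}(I_n;\bm{L}^{2})}^{2}$, producing the dissipative term in the statement.

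For the right-hand side of \eqref{eq:errorequationsadded} I would appeal directly to \Cref{lemma:estimation}, using Cauchy--Schwarz and Young's inequality to split each contribution into a data-side error of order $\t_n^{k+1}(\varepsilon_t^{n,1})$ or $h^{r}(\varepsilon_x^{n,0})$ and an absorbable piece of the form $\e\|\bm{N}^{1/2}(\bmEth^{2},\bmEth^{3})^{T}\|_{L^{2}(I_n;\bm{L}^{2})}^{2}$ or $c\|e_{\t,h}\|_{L^{2}(I_n;L^{2})}^{2}$. Two right-hand side terms cannot be estimated directly because $\bmPth^{2}$ is $\p_t\bmEth^{1}$ and the Lemma bounds require a smoother test function: for $\int_{I_n}\langle\bm{A}_h\bm{T}_{II}^{n},\p_t\bmEth^{1}\rangle\,dt$ I would integrate by parts in time to obtain $\delta_{1,n} - \delta_{1,n-1}^{+} - \int_{I_n}\langle\bm{A}_h\p_t\bm{T}_{II}^{n},\bmEth^{1}\rangle\,dt$ and then bound the remaining integral via \eqref{eq:estimation3} and Young's inequality, producing $c\norm{\bmEth^{1}}_{L^{2}(I_n;\bm{L}^{2})}^{2}$ plus a $\t_n^{2(k+1)}$ data term; the same integration-by-parts move applied to $\int_{I_n}\a\langle\om,\div\p_t\bmEth^{1}\rangle\,dt$ produces $\delta_{2,n} - \delta_{2,n-1}^{+}$ plus an error-plus-absorption term estimated using bounds on $\p_t\om$ analogous to \eqref{eq:estimation5}.

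The main obstacle, and the point where care is most needed, is the interplay between the $\Pi_{\t}^{k-1}$ projection and Gauss-quadrature exactness: one must recognise that $\Pi_{\t}^{k-1}\bmEth^{2}$ equals $\p_t\bmEth^{1}$ as a function (not merely at Gauss nodes), since only this global equality makes the integration by parts in time legitimate and makes the cancellations between the coupling terms exact rather than merely approximate. The rest of the proof is then an accounting exercise: collecting the boundary terms into the left-hand side, absorbing $\e$-weighted contributions by choosing $\e$ small enough, and grouping all interpolation bounds into the compact notation of \eqref{eq:notationsummation} to arrive at \eqref{eq:stabilityestimate}.
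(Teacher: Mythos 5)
Your proposal follows essentially the same route as the paper's proof: the same test functions \eqref{eq:testfunction}, the same identification $\Pi_{\t}^{k-1}\bmEth^{2}=\p_t\bmEth^{1}$ (which the paper exploits implicitly through Gauss-quadrature exactness and \eqref{eq:error2topartialerror1}, and which you correctly make explicit as an equality of polynomials in $\P_{k-1}$), the same cancellation of the $\mathfrak{A}_{\a}^{V}/\mathfrak{A}_{\a}^{P}$ and $\mathfrak{B}_{\eta_k}^{P}/\mathfrak{B}_{\eta_k}^{\Psi}$ pairs, the same integration by parts in time producing $\delta_{1,n}-\delta_{1,n-1}^{+}$ and $\delta_{2,n}-\delta_{2,n-1}^{+}$, and the same final appeal to \Cref{lemma:estimation} with Cauchy--Schwarz and Young. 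The one place where your justification is thinner than it looks is the dissipative term: $\int_{I_n}\langle\bmEth^{3},\Pi_{\t}^{k-1}\bmEth^{3}\rangle\,dt=\|\Pi_{\t}^{k-1}\bmEth^{3}\|_{L^{2}(I_n;\bm{L}^{2})}^{2}$ is only a seminorm on $\P_{k}(I_n;\bm{V}_h^{r})$ (it vanishes on the degree-$k$ Gauss--Legendre mode), so the claimed ``norm equivalence'' lower bound $\geq C_{\e}\|\bmEth^{3}\|_{L^{2}(I_n;\bm{L}^{2})}^{2}$ does not follow as stated --- but the paper makes exactly the same unjustified step, so your argument is faithful to the paper's proof.
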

	\begin{proof}
		The choice of test functions is important for balancing the errors. In particular, we want the terms $\mathfrak{B}_{\eta_{k}}^{P}$ and $\mathfrak{B}_{\eta_{k}}^{\Psi}$ and the terms $\mathfrak{A}_{\a}^{P}$ and $\mathfrak{A}_{\a}^{V}$ to cancel. Therefore in \eqref{eq:errorequationsadded}, we choose the test functions
		\begin{align}\label{eq:testfunction}
			\bmPth^{2}=\Pi_{\t}^{k-1}\bmE_{\t,h}^{2}
			\mbox{ and } \psi_{\t,h}=\Pi_{\t}^{k-1}e_{\t,h}, \mbox{ and } \bmPth^{3}=\Pi_{\t}^{k-1}\frac{1}{d_1\eta_k}\bmEth^{3}.
		\end{align}
		
		First, we consider the terms $\mathfrak{A}_{\a}^{P}$ and $\mathfrak{A}_{\a}^{V}$ in the equation, i.e., the coupling between the pressure and mechanics, with the test function defined in \eqref{eq:testfunction}. We note that this term also appears in \cite[Thm. 4.4.]{bause2024} and cancels due to the choice of test functions \eqref{eq:testfunction}. This is due to the definition of $(\bmE_{\t,h}^{1},\bmE_{\t,h}^{2})$ in \eqref{eqs:errorsplit} along with the relation between $(\bmw_1,\bmw_2)$ in \eqref{eq:lemma:4_2}. Also $(\bmu_{\t,h},\bmv_{\t,h})$ satisfy the first equation in \eqref{variational}, which leads to the observation
		\begin{align}\label{eq:error2topartialerror1}
			\bmE_{\t,h}^{2}(t_{n,\mu}^{G})=\p_t\bmE_{\t,h}^{1}(t_{n,\mu}^{G}),
		\end{align}
		for $\mu=1,...,k$.  
 Also note that by choosing the test functions in \eqref{eq:testfunction}, the terms $\mathfrak{B}_{\eta_{k}}^{P}$ and $\mathfrak{B}_{\eta_{k}}^{\Psi}$ cancel. 
		By the observation \eqref{eq:error2topartialerror1}, using that $\p_t\bmE_{\t,h}^{1}\in(\P_{k-1}(I_n;V_h^{r}))^{d}$ and \eqref{eqs:lemma2_1} we obtain
  \begin{equation*}\begin{aligned}\label{eq:omtoE2toE1}
			\int_{I_n}&\langle\bm{A}_h\bm{T}_{II}^{n},\bmE_{\t,h}^{2}\rangle dt = 
			\int_{I_n}\langle \Pi_{\t}^{k-1}\bm{A}_h\bm{T}_{II}^{n},\div\Pi_{\t}^{k-1}\bmE_{\t,h}^{2}\rangle dt =
			\frac{\t_n}{2}\sum_{\mu=1}^{k}\hat{\om}_{\mu}^{G}\langle \Pi_\t^{k-1}\bm{A}_h\bm{T}_{II}^{n}(t_{n,\mu}^{G}),\div\bmE_{\t,h}^{2}(t_{n,\mu}^{G})\rangle\nonumber\\
			&=\frac{\t_n}{2}\sum_{\mu=1}^{k}\hat{\om}_{\mu}^{G}\langle \Pi_\t^{k-1}\bm{A}_h\bm{T}_{II}^{n}(t_{n,\mu}^{G}),\div\p_t\bmE_{\t,h}^{1}(t_{n,\mu}^{G})\rangle=
			\int_{I_n}\langle \Pi_{\t}^{k-1}\bm{A}_h\bm{T}_{II}^{n},\div\p_t\bmE_{\t,h}^{1}\rangle dt
			=
			\int_{I_n}\langle \bm{A}_h\bm{T}_{II}^{n},\div\p_t\bmE_{\t,h}^{1}\rangle dt.
		\end{aligned}\end{equation*}
        Then by integration by parts, we get
		\begin{equation*}
			\begin{aligned}
				\int_{I_n}\langle\bm{A}_h\bm{T}_{II}^{n},\bmE_{\t,h}^{2}\rangle dt=\int_{I_n}\langle\bm{A}_h\bm{T}_{II}^{n},\p_t\bmE_{\t,h}^{1}\rangle dt=-\int_{I_n}\langle\bm{A}_h\p_t\bm{T}_{II}^{n},\bmE_{\t,h}^{1}\rangle dt+\d_{1,n}-\d_{1,n-1}^{+},
			\end{aligned}
		\end{equation*}
  where $\d_{1,n},\, \d_{1,n-1}^{+}$ are defined in \eqref{eqs:deltasinTheorem}.
  We also have from \cite[Eq. (4.36)-(4.37)]{bause2024} that 
 \begin{align}\label{eq:integrationByPartsE2toE1}
			\int_{I_n}\langle \om,\div\Pi_{\t}^{k-1}\bmE_{\t,h}^{2}\rangle dt 
			&= 
			-\int_{I_n}\langle \p_t\om,\div\bmE_{\t,h}^{1}\rangle dt+\d_{2,n} -\d_{2,n-1}^{+}.
		\end{align}
		Now we return to the first term of \eqref{eq:errorequationsadded} with the test function \eqref{eq:testfunction} and see that
		\begin{equation}
			\begin{aligned}
				\int_{I_n}\llangle[\Big] \bm{N}\begin{pmatrix}
					\p_t\bmE^{2}_{\t,h}\\
					\p_t\bmE^{3}_{\t,h}
				\end{pmatrix},\begin{pmatrix}
					\Pi_{\t}^{k-1}\bmE_{\t,h}^{2}\\
					\Pi_{\t}^{k-1}\bmEth^{3}
				\end{pmatrix}\rrangle[\Big]dt\underset{\eqref{eqs:lemma2_1}}{\overset{\eqref{gaussquadratureInKthpoint}}{=}}
				\frac{\t_n}{2}\sum_{\mu=1}^{k}\hat{\om}_{\mu}^{G}\llangle[\Bigg]\bm{N}\begin{pmatrix}
					\p_t\bmE_{\t,h}^{2}(t_{\t,\mu}^{G})\\
					\p_t\bmEth^{3}(t_{\t,\mu}^{G})
				\end{pmatrix},\begin{pmatrix}
					\bmE_{\t,h}^{2}(t_{\t,\mu}^{G})\\
					\bmEth^{3}(t_{\t,\mu}^{G})
				\end{pmatrix}\rrangle[\Bigg]
				\\
				=\,\int_{I_n}\llangle[\Big] \bm{N}\begin{pmatrix}
					\p_t\bmE^{2}_{\t,h}\\
					\p_t\bmE^{3}_{\t,h}
				\end{pmatrix},\begin{pmatrix}
					\bmE_{\t,h}^{2}\\
					\bmEth^{3}
				\end{pmatrix}\rrangle[\Big]dt
				=\,
				\frac12\left\|\bm{N}^{\frac12}\begin{pmatrix}
					\bmE^{2}_{\t,h}(t_n)\\\bmEth^{3}(t_n)
				\end{pmatrix}\right\|_{\bm{L}^{2}}^{2}-\frac12\left\|\bm{N}^{\frac12}\begin{pmatrix}
					\bmE^{2}_{\t,h}(t_{n-1}^{+})\\\bmEth^{3}(t_{n-1}^{+})
				\end{pmatrix}\right\|_{\bm{L}^{2}}^{2}.
			\end{aligned}
		\end{equation}
		By looking at the second term in \eqref{eq:errorequationsadded} and using \eqref{eqs:lemma2_1} and the symmetry of $\bm{A}_h$ we observe that
		\begin{equation}
			\begin{aligned}
				\int_{I_n}\langle \bm{A}_{h}\bmE^{1}_{\t,h},\Pi_{\t}^{k-1}\bmE_{\t,h}^{2}\rangle dt=&\,\frac{\t_n}{2}\sum_{\mu=1}^{k}\hat{\om}_{\mu}^{G}\langle \bm{A}_{h}\bmE^{1}_{\t,h}(t_{n,\mu}^{G}),\p_t\bmE_{\t,h}^{1}(t_{n,\mu}^{G})\rangle 
				\\
				=&\, \int_{I_n}\langle \bm{A}_{h}\bmE^{1}_{\t,h},\p_t\bmE_{\t,h}^{1}\rangle dt=\,\norm{\bmE_{\t,h}^{1}(t_{n})}_e^{2}-\norm{\bmE_{\t,h}^{1}(t_{n-1}^{+})}_e^{2}.
			\end{aligned}
		\end{equation}
		Note that the last term on the first line of \eqref{eq:errorequationsadded} with the test function defined in \eqref{eq:testfunction} is similar to \cite[Eq. (4.38)]{bause2024}, thus we obtain
		\begin{equation}\label{eq:pressureErrorToNorm}
			\begin{aligned}
				c_0\int_{I_n}\langle \p_t e_{\t,h},\Pi_{\t}^{k-1}e_{\t,h}\rangle dt 
				=\frac{c_0}{2}\|e_{\t,h}(t_n)\|_{L^{2}}^{2}-\frac{c_0}{2}\|e_{\t,h}(t_{n-1}^{+})\|_{L^{2}}^{2}.
			\end{aligned}
		\end{equation}
		Returning to \eqref{eq:errorequationsadded} and using \eqref{eq:testfunction} to \eqref{eq:pressureErrorToNorm} we get
		\begin{equation}
			\begin{aligned}
				&\norm{\bmE_{\t,h}^{1}(t_{n})}_e^{2}+\frac12\left\|\bm{N}^{\frac12}\begin{pmatrix}
					\bmE^{2}_{\t,h}(t_n)\\\bmEth^{3}(t_n)
				\end{pmatrix}\right\|_{\bm{L}^{2}}^{2}+\frac{c_0}{2}\|e_{\t,h}(t_n)\|_{L^{2}}^{2}+\rho_f Fd_1^{-1}\eta_k^{-1}\int_{I_n}\langle \bm{E}_{\t,h}^{3},\Pi_\t^{k-1}\bmEth^{3}\rangle dt 	
				\\
				&
				=\norm{\bmE_{\t,h}^{1}(t_{n-1}^{+})}_e^{2}+\frac{c_0}{2}\|e_{\t,h}(t_{n-1}^{+})\|_{L^{2}}^{2}+\frac12\left\|\bm{N}^{\frac12}\begin{pmatrix}
					\bmE^{2}_{\t,h}(t_{n-1}^{+})\\\bmEth^{3}(t_{n-1}^{+})
				\end{pmatrix}\right\|_{\bm{L}^{2}}^{2}+\delta_{1,n}-\delta_{1,n-1}^{+}+\delta_{2,n}-\delta_{2,n-1}^{+}
				\\
                &\quad+\int_{I_n}\llangle[\Big] \begin{pmatrix}
					T_{\bmF_1}\\
					T_{\bmF_2}
				\end{pmatrix},\begin{pmatrix}
					\Pi_{\t}^{k-1}\bmE_{\t,h}^{2}\\
					d_1^{-1}\eta_k^{-1}\Pi_{\t}^{k-1}\bmEth^{3}
				\end{pmatrix}\rrangle[\Big] dt
				-\int_{n}\left\langle \bm{A}_h \bm{T}_{I}^{n},\Pi_{\t}^{k-1}\bmE_{\t,h}^{2}\right\rangle dt
				-\int_{I_n}\langle\bm{A}_h\p_t\bm{T}_{II}^{n},\bmE_{\t,h}^{1}\rangle dt
				\\
				&\quad-	\int_{I_n}\llangle[\Big] \bm{N}\begin{pmatrix}
					\p_t\bm{\eta}_{2}\\
					\p_t\bm{\zeta}
				\end{pmatrix},\begin{pmatrix}
					\Pi_{\t}^{k-1}\bmE_{\t,h}^{2}\\
					\Pi_{\t}^{k-1}\bmEth^{3}
				\end{pmatrix}\rrangle[\Big]dt-\a\int_{I_n}\langle \p_t\om,\div\bmE_{\t,h}^{1}\rangle dt
				-c_0\int_{I_n}\langle \p_t\om,\Pi_{\t}^{k-1}e_{\t,h}\rangle dt
				\\
				&\quad-\a\int_{I_n}\langle \div\p_t\bm{\eta}_1,\Pi_{\t}^{k-1}e_{\t,h}\rangle dt
				-\int_{I_n}\langle \div\bm{\zeta},\Pi_{\t}^{k-1}e_{\t,h}\rangle dt
				\\
                &\quad
				-\rho_f Fd_1^{-1}\eta_k^{-1}\int_{I_n}\langle \bm{\zeta},\Pi_{\t}^{k-1}\bmEth^{3}\rangle dt
				+\int_{I_n}\langle \om,\div\Pi_{\t}^{k-1}\bmEth^{3}\rangle dt.
			\end{aligned}
		\end{equation}
		First, we observe that \eqref{eq:errorVelocity} with $\bmPth^{1}=\p_{t}\bmE_{\t,h}^{1}$ implies that $\|\bmEth^{2}\|_{L^{2}(I_n;\bm{L}^{2})}=\|\p_t\bmEth^{1}\|_{L^{2}(I_n;\bm{L}^{2})}$. Further, by the $H^{1}-L^{2}$ inverse inequality \eqref{eq:HoneLtwoInverseIneq} we see that $$\norm{\bmEth^{2}}_{L^{2}(I_n;\bm{L}^{2})}=\norm{\p_t\bmEth^{1}}_{L^{2}(I_n;\bm{L}^{2})}\leq c\t_n^{-1}\norm{\bmEth^{1}}_{L^{2}(I_n;\bm{L}^{2})}.$$  Then, by the estimates in \Cref{lemma:estimation} along with Cauchy-Schwarz and Young inequalities with a sufficiently small $\e>0$ we obtain the stability estimate
		\eqref{eq:stabilityestimate}.

	\end{proof}
	\begin{lemma}\label{lemma:newname}
		Let $n=1,...,N$. For the errors $\bmE_{\t,h}^{1}, \bmE_{\t,h}^{2}, e_{\t,h}$ and $\bmEth^{3}$ defined in \eqref{eqs:errorsplit}, there holds that
		\begin{equation}\label{eq:estimate}
			\begin{aligned}
				c\left\| \bm{N}^{1/2}\begin{pmatrix}
					\bmE_{\t,h}^{2}\\
					\bmEth^{3}
				\end{pmatrix}\right\|^{2}_{L^{2}(I_n;\bm{L}^{2})}&+c\norm{\bmE_{\t,h}^{1}}_{L^{2}(I_n;\bm{L}^{2})}^{2} +c\|e_{\t,h}\|^{2}_{L^{2}(I_n;L^{2})}
				\\&\leq c\t_n \left\| \bm{N}^{1/2}\begin{pmatrix}
					\bmE_{\t,h}^{2}(t_{n-1}^{+})\\
					\bmEth^{3}(t_{n-1}^{+})
				\end{pmatrix}\right\|_{\bm{L}^{2}}^{2}+c\t_n\norm{\bmE_{\t,h}^{1}(t^{+}_{n-1})}^{2}+c\t_n\|e_{\t,h}(t^{+}_{n-1})\|_{L^{2}}^{2}
				\\&\quad+  c\t_n\left(\t_{n}^{2(k+1)}(\varepsilon_{t}^{n,1})^{2}+h^{2(r)}(\varepsilon_{x}^{n,0})^{2}\right),
			\end{aligned}
		\end{equation}
		where $\varepsilon_{t}^{n,1}$ and $\varepsilon_{x}^{n,0}$ are defined in \eqref{eq:notationsummation} and $\varepsilon_{x}^{n,2}=\|p\|_{L^{\infty}(I_n;H^{r+1})}$.
	\end{lemma}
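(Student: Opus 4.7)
The plan is to exploit the polynomial-in-time structure of the discrete errors on $I_n$ together with their pointwise values at $t_{n-1}^+$ and at the Gauss nodes $\{t_{n,\mu}^G\}_{\mu=1}^k$. Since each of $\bmEth^1,\bmEth^2,\bmEth^3 \in (\P_k(I_n;V_h^r))^d$ and $e_{\t,h}\in\P_k(I_n;V_h^r)$, Lemma~2.1 of \cite{makridakis2005} supplies the $L^2$-bound
\begin{align*}
\|w\|_{L^2(I_n;\bm{L}^2)}^2 \;\leq\; c\t_n\Bigl(\|w(t_{n-1}^+)\|^2 + \sum_{\mu=1}^k \|w(t_{n,\mu}^G)\|^2\Bigr),\qquad w\in\P_k(I_n;V_h^r),
\end{align*}
as well as the underlying Lagrange representation at $\{t_{n-1}^+,t_{n,1}^G,\ldots,t_{n,k}^G\}$. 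Applying this to each discrete error reduces the proof to controlling the Gauss-node values by the initial-value terms and the interpolation-error quantities collected in \Cref{lemma:estimation}.

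To control those Gauss-node values I would test the variational error equations of \Cref{lemma:varerror} with polynomials of degree $k-1$ built from the Lagrange basis at the Gauss nodes, employing the same weighting that cancels $\mathfrak{A}_\a^V$ against $\mathfrak{A}_\a^P$ and $\mathfrak{B}_{\eta_k}^P$ against $\mathfrak{B}_{\eta_k}^\Psi$ in the proof of \Cref{theorem:stability}. Because the Gauss rule is exact on $\P_{2k-1}(I_n;\R)$, the integrals collapse to weighted sums of Gauss-node values from which $\sum_\mu\|\bmEth^i(t_{n,\mu}^G)\|^2$ can be extracted. The velocity and ADE variables must be paired through $\bm{N}$, as in the stability estimate \eqref{eq:errorstabilityestimate}, so that the cross-coupling $\rho_f\p_t\bmEth^3$ and $\rho_fd_1\eta_k\p_t\bmEth^2$ does not destroy the estimate. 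The inherited right-hand sides are then bounded by \Cref{lemma:estimation}, and an $\varepsilon$-Young argument absorbs any residual $L^2$-pieces into the left-hand side.

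For the $\bmEth^1$ component, which lacks a directly coercive equation, I would use $\bmEth^1(t)=\bmEth^1(t_{n-1}^+)+\int_{t_{n-1}}^t\p_s\bmEth^1\,ds$. Testing \eqref{eq:errorVelocity} with $\bmPth^1=\p_t\bmEth^1\in\P_{k-1}$ and using Cauchy--Schwarz yields $\|\p_t\bmEth^1\|_{L^2(I_n;\bm{L}^2)}\leq\|\bmEth^2\|_{L^2(I_n;\bm{L}^2)}$, after which integration over $I_n$ gives
\begin{align*}
\|\bmEth^1\|_{L^2(I_n;\bm{L}^2)}^2 \;\leq\; c\t_n\|\bmEth^1(t_{n-1}^+)\|^2 + c\t_n^2\|\bmEth^2\|_{L^2(I_n;\bm{L}^2)}^2,
\end{align*}
and the last term is absorbed by the bound already obtained for $\|\bmEth^2\|_{L^2(I_n;\bm{L}^2)}^2$ with an $\mathcal{O}(\t_n^2)$ penalty.

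The main obstacle I expect is bookkeeping: the $k+1$ nodes of the representation, the degree $k-1$ of the admissible test space, and the $2k-1$ exactness of Gauss quadrature must all be matched consistently across the four coupled variational equations so that the Gauss-node values are cleanly extracted and nothing is lost in the coupling between $\bmEth^2$ and $\bmEth^3$. Once this is arranged, the prefactor $\t_n$ on the right of \eqref{eq:estimate} comes from the Makridakis--Makridakis representation, the orders $\t_n^{2(k+1)}$ and $h^{2r}$ come directly from \Cref{lemma:estimation}, and the initial-value terms at $t_{n-1}^+$ combine into the stated bound.
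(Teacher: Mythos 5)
Your overall strategy for the $(\bmEth^{2},\bmEth^{3})$ block and for $e_{\t,h}$ is the same as the paper's: represent the discrete errors in the Lagrange basis at $t_{n-1}$ and the Gauss nodes, test with the $(\hat t_i^G)^{-1/2}$-weighted Gauss--Lagrange basis so that $\mathfrak{A}_\a^V/\mathfrak{A}_\a^P$ and $\mathfrak{B}_{\eta_k}^P/\mathfrak{B}_{\eta_k}^\Psi$ cancel, pair $\bmEth^{2}$ and $\bmEth^{3}$ through $\bm{N}$, invoke the positive definiteness of $\widetilde{\bm M}$ from \cite[Lem.~2.1]{makridakis2005} to extract the Gauss-node values, and close with \Cref{lemma:estimation} and Young's inequality. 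That part is sound.

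The genuine gap is in your treatment of $\bmEth^{1}$. The left-hand side of \eqref{eq:estimate} contains $\norm{\bmEth^{1}}^{2}_{L^{2}(I_n;\bm{L}^{2})}$, where $\norm{\bm Z}=\|\del\bm Z\|$ is the $H^{1}$ seminorm, not the spatial $L^{2}$ norm. Your argument via $\bmEth^{1}(t)=\bmEth^{1}(t_{n-1}^{+})+\int_{t_{n-1}}^{t}\p_s\bmEth^{1}\,ds$ and $\|\p_t\bmEth^{1}\|_{L^{2}(I_n;\bm{L}^{2})}\le\|\bmEth^{2}\|_{L^{2}(I_n;\bm{L}^{2})}$ only bounds $\|\bmEth^{1}\|_{L^{2}(I_n;\bm{L}^{2})}$, which is the wrong quantity. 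If you repeat it for $\del\bmEth^{1}$, the absorption step requires $\t_n^{2}\|\del\bmEth^{2}\|^{2}_{L^{2}(I_n;\bm{L}^{2})}$, and the gradient of $\bmEth^{2}$ is \emph{not} controlled by the rest of the estimate (only $\|\bm N^{1/2}(\bmEth^{2},\bmEth^{3})^{T}\|_{L^{2}(I_n;\bm{L}^{2})}$ is); recovering it by a spatial inverse inequality would cost $h^{-2}$ and impose a CFL-type coupling of $\t_n$ and $h$ that the lemma does not assume. The paper obtains the gradient control of $\bmEth^{1}$ from a different source: the elasticity term $Q_n=\int_{I_n}\langle\bm A_h\bmEth^{1},\bmPth^{2}\rangle\,dt$ is rewritten, using $\bmEth^{2}(t_{n,\mu}^{G})=\p_t\bmEth^{1}(t_{n,\mu}^{G})$ and Gauss exactness, as $\sum_{i,j}\widetilde m_{ij}\langle\bm A_h\widetilde{\bmE}^{1}_{n,j},\widetilde{\bmE}^{1}_{n,i}\rangle$ plus an initial-value term, so the positive definiteness of $\widetilde{\bm M}$ combined with the coercivity of $\bm A_h$ yields $\t_nQ_n\ge\norm{\bmEth^{1}}^{2}_{L^{2}(I_n;\bm{L}^{2})}-c\t_n\norm{\bmEth^{1}(t_{n-1}^{+})}^{2}$. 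This gradient control is also what allows the term $\int_{I_n}\langle\om,\div\bmPth^{2}\rangle\,dt$ in \eqref{eq:RnEstimate} to be absorbed; without it your argument cannot close.
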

	\begin{proof}
		Similarly to the previous lemma, we need to choose the test functions so that the terms $\mathfrak{B}_{\eta_{k}}^{P}$ and $\mathfrak{B}_{\eta_{k}}^{\Psi}$ and the terms $\mathfrak{A}_{\a}^{P}$ and $\mathfrak{A}_{\a}^{V}$ in the summed error equation \eqref{eq:errorequationsadded} cancel. In addition, we use an idea from \cite{makridakis1998} that we consider representations of the errors on the subinterval $I_n$. We let the mechanical error  $\bmE_{\t,h}=\left(\bmE_{\t,h}^{1},\bmE_{\t,h}^{2}\right)$ and the ADE error $\bmEth^{3}$ be represented by 
		\begin{align}\label{eq:representation}
			\bmE_{\t,h}^{m}(t)=\sum_{j=0}^{k}\bmE_{n,j}^{m}\phi_{n,j}(t),\quad\mbox{for } t\in I_n,\,m\in\{1,2,3\},
		\end{align}
		where $\bmE_{n,j}^{m}\in\bm{V}^{r}_{h}$ for $j=0,...,k$, and $\phi_{n,j}\in\P_{k}(I_n;\R)$ for $j=0,...k$ are the Lagrange interpolants with respect to $t_{n-1}$ and the Gauss quadrature nodes $t_{n,1}^{G},t_{n,2}^{G},...,t_{n,k}^{G}\in (t_{n-1},t_n)$ of \eqref{gaussquadratureInKthpoint}. Consequently, by recalling the limit definition $\eqref{eq:limitdef}$, it holds that $\bmE_{n,0}^{m}=\bmE_{\t,h}^{m}(t_{n-1}^{+})$. Also for the pressure error $e_{\t,h}$ we use the representation
		\begin{align}\label{eq:representation2}
			e_{\t,h}(t)=\sum_{j=0}^{k}e_{n,j}\phi_{n,j}(t),\quad \mbox{for }t\in I_n,
		\end{align}
		where $e_{n,j}\in V^{r}_{h},$ for $j=0,...,k$.  Let the values $\hat{t}_{i}^{G}$ denote the quadrature nodes of the Gauss formula \eqref{gaussquadratureInKthpoint} on a reference element $\hat{I}$ and by considering \eqref{eq:errorequationsadded} we choose the test functions 
		\begin{equation}\label{eq:testfunctiontilde}
			\begin{aligned}
				\bmPth^{2}(t)=\sum_{i=1}^{k}(\hat{t}_{i}^{G})^{-1/2}\widetilde{\bmE}_{n,i}^{2}\psi_{n,i}(t),&\quad \bmPth^{3}(t)=\sum_{i=1}^{k}(\hat{t}_{i}^{G})^{-1/2}d^{-1}_1\eta_k^{-1}\widetilde{\bmE}_{n,i}^{3}\psi_{n,i}(t),\\ \psi_{\t,h}(t)=&\sum_{i=1}^{k}(\hat{t}^{G}_{i})^{-1/2}\widetilde{e}_{n,i}\psi_{n,i}(t),
			\end{aligned}
		\end{equation}
		where $\widetilde{\bmE}_{n,i}^{m}=(\hat{t}_{i}^{G})^{-1/2}\bmE_{n,i}^{m}$ for $m\in\{1,2,\Psi\}$ and $\widetilde{e}_{n,i}:=(\hat{t}^{G}_{i})^{-1/2}e_{n,i}$ for $i=1,...,k$, and $\psi_{n,i}\in\P_{k-1}(I_n;\R)$ for $i=1,...,k$ are the Lagrange interpolants with respect to the Gauss quadrature nodes $t_{n,1}^{G},t_{n,2}^{G},...,t_{n,k}^{G}\in (t_{n-1},t_n)$ of \eqref{gaussquadratureInKthpoint}. First, we consider the red terms $\mathfrak{A}_{\a}^{P}$ and $\mathfrak{A}_{\a}^{V}$ in \eqref{eq:errorequationsadded}. By the observation on a similar coupling term in \cite[Eq. (4.51)]{bause2024} we see that for our choice of test functions, it will cancel.
  In addition, the blue terms $\mathfrak{B}_{\eta_{k}}^{P}$ and $\mathfrak{B}_{\eta_{k}}^{\Psi}$ cancel by the choice of test functions \eqref{eq:testfunctiontilde}.
		Next, we consider the first term of \eqref{eq:errorequationsadded}.
		To control this term, we use an idea similar to \cite[Eq. (3.25)]{makridakis2005} and \cite[Lemma 3.2]{makridakis1998}. First, we define the matrix $\bm{M}=(m_{ij})_{i,j=1,...,k}$ and the vector $\bm{m}_0=(m_{i0})_{i=1,...,k}$ where
		\begin{align}\label{eq:matrixMij}
			m_{ij}:=\int_{I_n}\phi_{n,j}'(t)\psi_{n,i}(t)dt,\mbox{ for } i =1,...,k, \mbox{ for } j=1,...,k, \quad m_{i0}:= \int_{I_n}\phi_{n,0}'(t)\psi_{n,i}(t)dt,\mbox{ for } i =1,...,k,
		\end{align}
		and further the matrix $\widetilde{\bm{M}}= (\widetilde{m}_{ij})_{i,j=1,...,k}$ is given by
		\begin{align*}
			\widetilde{\bm{M}}:=\bm{D}^{-1/2}\bm{M}\bm{D}^{1/2}, \mbox{ with } \bm{D}={\rm diag}\{\hat{t}^{G}_{1},...,\hat{t}^{G}_{k}\}.
		\end{align*}
        Then, we use the observation that $\bmE_{n,0}^{m}=\bmE_{\t,h}^{m}(t_{n-1}^{+})$, for $m\in\{1,2,\Psi\}$ along with the test function \eqref{eq:testfunctiontilde}. We get that 
		\begin{equation}\label{eq:chiNdefinition}
			\begin{aligned}
				\chiup_n:=	\int_{I_n}\llangle[\Big] \bm{N}\begin{pmatrix}
					\p_t\bmE^{2}_{\t,h}\\
					\p_t\bmE^{3}_{\t,h}
				\end{pmatrix},\begin{pmatrix}
					\bmPth^{2}\\
					\bmPth^{3}
				\end{pmatrix}\rrangle[\Big]dt=\int_{I_n}\llangle[\Big] \bm{N}\begin{pmatrix}
					\p_t\bmE^{2}_{\t,h}\\
					\p_t\bmE^{3}_{\t,h}
				\end{pmatrix},\sum_{i=1}^{k}(\hat{t}_{i}^{G})^{-1/2}\begin{pmatrix}
					\widetilde{\bmE}_{n,i}^{2}\\
					\widetilde{\bmE}_{n,i}^{3}
				\end{pmatrix}\psi_{n,i}\rrangle[\Big]dt
				\\
				=\int_{I_n}\llangle[\Big] \bm{N}\sum_{j=0}^{k}(\hat{t}_{i}^{G})^{1/2}\begin{pmatrix}
					\widetilde{\bmE}_{n,j}^{2}\\
					\widetilde{\bmE}_{n,j}^{3}
				\end{pmatrix}\phi_{n,j}'(t),\sum_{i=1}^{k}(\hat{t}_{i}^{G})^{-1/2}\begin{pmatrix}
					\widetilde{\bmE}_{n,i}^{2}\\
					\widetilde{\bmE}_{n,i}^{3}
				\end{pmatrix}\psi_{n,i}\rrangle[\Big]dt
				\\
				=\sum_{i,j=1}^{k}\widetilde{m}_{ij}\llangle[\Big] \bm{N}\begin{pmatrix}
					\widetilde{\bmE}_{n,j}^{2}\\
					\widetilde{\bmE}_{n,j}^{3}
				\end{pmatrix},\begin{pmatrix}
					\widetilde{\bmE}_{n,i}^{2}\\
					\widetilde{\bmE}_{n,i}^{3}
				\end{pmatrix}\rrangle[\Big]+\sum_{i=1}^{k}m_{i0}(t_i^{G})^{-1/2}\llangle[\Big] \bm{N}\begin{pmatrix}
					\bmE_{\t,h}^{2}(t_{n-1}^{+})\\
					\bmEth^{3}(t_{n-1}^{+})
				\end{pmatrix},\begin{pmatrix}
					\widetilde{\bmE}_{\t,h}^{2}\\
					\widetilde{\bmE}_{\t,h}^{3}
				\end{pmatrix}\rrangle[\Big].
			\end{aligned}
		\end{equation}
		Then, noting that the positive definiteness of $\widetilde{\bm{M}}$ is ensured by \cite[Lem. 2.1]{makridakis2005}, we get that
		\begin{equation*}
			\begin{aligned}
				\chiup_n&\geq c\sum_{j=1}^{k}\left\| \bm{N}^{1/2}\begin{pmatrix}
					\widetilde{\bmE}_{n,j}^{2}\\
					\widetilde{\bmE}_{n,j}^{3}
				\end{pmatrix}\right\|^{2}-c\left(\sum_{j=1}^{k}\left\| \begin{pmatrix}
					\widetilde{\bmE}_{n,j}^{2}\\
					\widetilde{\bmE}_{n,j}^{3}
				\end{pmatrix}\right\|^{2}\right)^{1/2}\left\| \bm{N}^{1/2}\begin{pmatrix}
					\bmE_{\t,h}^{2}(t_{n-1}^{+})\\
					\bmEth^{3}(t_{n-1}^{+})
				\end{pmatrix}\right\|
				\\
				&\geq c\sum_{j=1}^{k}\left\| \bm{N}^{1/2}\begin{pmatrix}
					\widetilde{\bmE}_{n,j}^{2}\\
					\widetilde{\bmE}_{n,j}^{3}
				\end{pmatrix}\right\|^{2}-c\left\| \bm{N}^{1/2}\begin{pmatrix}
					\bmE_{\t,h}^{2}(t_{n-1}^{+})\\
					\bmEth^{3}(t_{n-1}^{+})
				\end{pmatrix}\right\|^{2}.
			\end{aligned}
		\end{equation*}
		Furthermore, by the equivalence of $\sum_{j=1}^{k}\|\widetilde{\bmE}_{n,j}\|^{2}$ and $\sum_{j=1}^{k}\|\bmE_{n,j}\|^{2}$ and the norm equivalence \cite[Eq.\ 2.4]{makridakis2005} we see that
		\begin{equation}\label{eq:chiNestimate}
			\t_n\chiup_n\geq c \left\| \bm{N}^{1/2}\begin{pmatrix}
				\bmE_{\t,h}^{2}\\
				\bmEth^{3}
			\end{pmatrix}\right\|^{2}_{L^{2}(I_n;\bm{L}^{2})}-c\t_n \left\| \bm{N}^{1/2}\begin{pmatrix}
				\bmE_{\t,h}^{2}(t_{n-1}^{+})\\
				\bmEth^{3}(t_{n-1}^{+})
			\end{pmatrix}\right\|_{\bm{L}^{2}}^{2}.
		\end{equation}
		We must also control the second term on the left-hand side of \eqref{eq:errorequationsadded}. First, note the similarity with the terms in \cite[Eq. (4.46)]{bause2024} and \cite[Eq. (69)]{kraus2024} along with the observation \eqref{eq:error2topartialerror1} from which it follows that
		\begin{equation}\label{eq:Qn}
			\begin{aligned}
				Q_n:=&\int_{I_n}\langle \bm{A}_{h}\bmE^{1}_{\t,h},\bmPth^{2}\rangle dt = \int_{I_n}\langle \bm{A}_{h}\bmE^{1}_{\t,h},\sum_{i=1}^{k}(\hat{t}_{i}^{G})^{-1/2}\widetilde{\bmE}_{n,i}^{2}\psi_{n,i}(t)\rangle dt
				\\
				=&\sum_{i,j=1}^{k}\widetilde{m}_{ij}\langle \bm{A}_{h}\widetilde{\bmE}^{1}_{n,j},\widetilde{\bmE}_{n,i}^{1}\rangle+\sum_{i=1}^{k}m_{i0}(t_i^{G})^{-1/2}\langle \bm{A}_{h}\bmE^{1}_{\t,h}(t_{n-1}^{+}),\widetilde{\bmE}_{n,i}^{1}\rangle.
			\end{aligned}
		\end{equation}
		Again by the positivity of $\widetilde{\bm{M}}$ we have that
		\begin{equation}\label{eq:QnEstimate}
			\t_n Q_n\geq \norm{\bmE_{\t,h}^{1}}_{L^{2}(I_n;\bm{L}^{2})}^{2}-c\t_n\norm{\bmE_{\t,h}^{1}}^{2}.
		\end{equation}
		From \cite[Lem 4.5]{bause2024} with test function \eqref{eq:testfunction} we already have the estimate
  \begin{equation}\label{eq:RnEstimate}
      \begin{aligned}
          \t_n\int_{I_n}\langle \om,\div\bmPth^{2}\rangle dt \leq c\t_nh^{2(r+1)}\|p\|_{L^{\infty}(I_n;H^{r+1})}^{2}+\e\norm{\bmE_{\t,h}^{1}}^{2}_{L^{2}(I_n;\bm{L}^{2})},
      \end{aligned}
  \end{equation}
  where $\e>0$ is a sufficiently small constant resulting from Young's inequality.

		Next, we consider the error equation for the flow \eqref{eq:errorFlow}. Note that the same terms appear in \cite[Eq. (4.55)]{bause2024} and therefore we have
  \begin{equation}\label{eq:SnEstimate}
      \begin{aligned}
          &\,\t_n\int_{I_n}\langle \p_t e_{\t,h},\psi_{\t,h}\rangle dt \geq c\|e_{\t,h}\|^{2}_{L^{2}(I_n;L^{2})}-c\t_n\|e_{\t,h}(t_{n-1}^{+})\|_{L^{2}}^{2}.
      \end{aligned}
  \end{equation}
Finally, considering the summation of the errors  \eqref{eq:errorequationsadded} multiplied with $\t_n$ and by choosing the test functions \eqref{eq:testfunctiontilde} to cancel out the blue and red terms we can use the inequalities \eqref{eq:chiNestimate}, \eqref{eq:QnEstimate}, \eqref{eq:RnEstimate}, and \eqref{eq:SnEstimate} along with \Cref{lemma:estimation} and Young's inequality to obtain the estimate \eqref{eq:estimate}.

	\end{proof}
	
	\begin{lemma}\label{lemma:estimation2}
		Let $n\in\{2,...,N\}$. For the errors $\bmE_{\t,h}^{1},\bmE_{\t,h}^{2}, e_{\t,h}$ and $\bmEth^{3}$ defined in \eqref{eqs:errorsplit}, the following inequality holds
		\begin{equation}\label{eq:lemma49}
			\begin{aligned}
				&\norm{\bmE_{\t,h}^{1}(t_{n-1}^{+})}_e+\frac{c_0}{2}\|e_{\t,h}(t_{n-1}^{+})\|_{L^{2}}^{2}+\frac12\left\|\bm{N}^{\frac12}\begin{pmatrix}
					\bmE^{2}_{\t,h}(t_{n-1}^{+})\\\bmEth^{3}(t_{n-1}^{+})
				\end{pmatrix}\right\|_{\bm{L}^{2}}^{2}
				\\
				\leq& (1+\t_{n-1})\left(\norm{\bmE_{\t,h}^{1}(t_{n-1})}_e+\frac{c_0}{2}\|e_{\t,h}(t_{n-1})\|_{L^{2}}^{2}+\frac12\left\|\bm{N}^{\frac12}\begin{pmatrix}
					\bmE^{2}_{\t,h}(t_{n-1})\\\bmEth^{3}(t_{n-1})
				\end{pmatrix}\right\|_{\bm{L}^{2}}^{2}\right)+c\t_{n-1}^{2(k+1)}(\varepsilon_{t}^{n-1,2})^{2},
			\end{aligned}
		\end{equation}
  where $\varepsilon_t^{n-1,2}=\|\p_t^{k+2}\bmu\|_{L^{2}(I_{n-1};\bm{H}^{1})}$. It also holds that the errors $\bmE_{\t,h}^{1},\bmE_{\t,h}^{2}, e_{\t,h}$ and $\bmEth^{3}$ at $t^{+}_{0}$ satisfy
  \begin{equation}\label{eq:initailrel}\begin{aligned}
			\norm{\bmE_{\t,h}^{1}(t_0^{+})}_e^{2}&+\frac12\left\|\bm{N}^{\frac12}\begin{pmatrix}
					\bmE^{2}_{\t,h}(t_0^{+})\\\bmEth^{3}(t_0^{+})
				\end{pmatrix}\right\|_{\bm{L}^{2}}^{2}+\frac{c_0}{2}\|e_{\t,h}(t_0^{+})\|_{L^{2}}^{2}	
			\leq ch^{2r}.
		\end{aligned}
  \end{equation}
	\end{lemma}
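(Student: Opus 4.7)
The plan hinges on the fact that only $\bmE_{\t,h}^{1}$ can jump at $t_{n-1}$. Since $\bmu_{\t,h}, \bmv_{\t,h}, p_{\t,h}, \Psi_{\t,h}$ all lie in the globally continuous space $X_\t^{k}$, and since the auxiliary quantities $\bmw_2 = I_\t(\bm{R}_h \p_t\bmu)$, $I_\t R_h p$, and $I_\t \bm{R}_h \Psi$ are also globally continuous as outputs of the global Lagrange interpolant $I_\t$, the components $\bmE_{\t,h}^{2}, e_{\t,h}$, and $\bmE_{\t,h}^{3}$ satisfy $\bmE_{\t,h}^{2}(t_{n-1}^+)=\bmE_{\t,h}^{2}(t_{n-1})$, $e_{\t,h}(t_{n-1}^+)=e_{\t,h}(t_{n-1})$, and $\bmE_{\t,h}^{3}(t_{n-1}^+)=\bmE_{\t,h}^{3}(t_{n-1})$. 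Hence those three contributions on the two sides of \eqref{eq:lemma49} coincide identically, and the factor $(1+\t_{n-1})$ is needed only to absorb the jump of $\bmE_{\t,h}^{1}$. The function $\bmw_1$ can be discontinuous because Definition~\ref{def:specialapprox} anchors an antiderivative at the left endpoint of each subinterval.

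Next I would compute this remaining jump. Using the continuity of $\bmu_{\t,h}$ and that Gauss-Lobatto nodes include the endpoints (so $I_\t$ preserves endpoint values), one reads off $\bmw_1(t_{n-1}^+) = \bm{R}_h\bmu(t_{n-1})$ from the $I_n$-side and $\bmw_1(t_{n-1}^-) = \int_{t_{n-2}}^{t_{n-1}} \bmw_2(s)\,ds + \bm{R}_h\bmu(t_{n-2})$ from the $I_{n-1}$-side. Writing $\bm{R}_h\bmu(t_{n-1}) - \bm{R}_h\bmu(t_{n-2}) = \int_{t_{n-2}}^{t_{n-1}}\bm{R}_h\p_t\bmu\,ds$ and recalling $\bmw_2 = I_\t\bm{R}_h\p_t\bmu$, one obtains
\[
\bmE_{\t,h}^{1}(t_{n-1}^+) - \bmE_{\t,h}^{1}(t_{n-1}) = \int_{t_{n-2}}^{t_{n-1}} \bigl(\bm{R}_h\p_t\bmu - I_\t\bm{R}_h\p_t\bmu\bigr)\,ds.
\]
Cauchy-Schwarz in time yields control of the $\norm{\cdot}_e$-norm by $c\,\t_{n-1}^{1/2}\|\bm{R}_h\p_t\bmu - I_\t\bm{R}_h\p_t\bmu\|_{L^{2}(I_{n-1};\bm{H}^1)}$, and applying the Lagrange interpolation estimate \eqref{eq:lagrangestability} together with the $\bm{H}^{1}$-stability of $\bm{R}_h$ (obtained from \eqref{eq:ellipticprojectionerrorVector} and the triangle inequality) gives
\[
\norm{\bmE_{\t,h}^{1}(t_{n-1}^+)-\bmE_{\t,h}^{1}(t_{n-1})}_e^{2} \leq c\,\t_{n-1}^{2k+3}\|\p_t^{k+2}\bmu\|_{L^{2}(I_{n-1};\bm{H}^1)}^{2}.
\]

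To conclude \eqref{eq:lemma49}, I would apply the triangle inequality followed by Young's inequality in the form $(a+b)^{2} \leq (1+\t_{n-1})a^{2} + (1+\t_{n-1}^{-1})b^{2}$. The $\t_{n-1}^{-1}$ factor absorbs exactly one power of $\t_{n-1}$ from the jump bound, producing $c\,\t_{n-1}^{2(k+1)}(\varepsilon_t^{n-1,2})^{2}$ with $\varepsilon_t^{n-1,2} = \|\p_t^{k+2}\bmu\|_{L^{2}(I_{n-1};\bm{H}^1)}$, as already anticipated in \eqref{eq:notationsummation}. The initial estimate \eqref{eq:initailrel} is immediate: since $t_0$ is a Gauss-Lobatto node, all interpolants collapse to the elliptic projections of the initial data, so $\bmE_{\t,h}^{1}(t_0^{+}) = \bm{R}_h\bmu_0 - \bmu_{0,h}$ and analogously for the other three components; inserting hypotheses \eqref{ass:initialapprox} together with the norm equivalence \eqref{eq:norm} then yields the $ch^{2r}$ bound (the higher-order $h^{2(r+1)}$ contributions from $\bmv, p, \Psi$ are dominated by $h^{2r}$). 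The main subtlety is obtaining $\t_{n-1}^{k+3/2}$ rather than only $\t_{n-1}^{k+1}$ on the jump, which relies on combining the Cauchy-Schwarz factor $\t_{n-1}^{1/2}$ from integrating in time with the full order $\t_{n-1}^{k+1}$ from Lagrange interpolation, ensuring that the $\t_{n-1}^{-1}$ produced by Young's inequality leaves a harmless residue.
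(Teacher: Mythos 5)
Your proposal is correct and follows essentially the same route as the paper, which itself only sketches this lemma by citing \cite[Lem.\ 3.5]{makridakis2005} and \cite[Lem.\ 4.6]{bause2024} together with the remark that the continuity of $\bmv_{\t,h},p_{\t,h},\Psi_{\t,h}$ and of the global Gauss--Lobatto interpolants forces $\bmE_{\t,h}^{2},e_{\t,h},\bmEth^{3}$ to be jump-free, so that only the jump of $\bmE_{\t,h}^{1}$ (equivalently of $\bmw_1$) must be estimated. Your explicit computation of that jump as $\int_{t_{n-2}}^{t_{n-1}}(\bm{R}_h\p_t\bmu-I_\t\bm{R}_h\p_t\bmu)\,ds$, the $\t_{n-1}^{1/2}$ gain from Cauchy--Schwarz, the Young inequality with weights $(1+\t_{n-1})$ and $(1+\t_{n-1}^{-1})$, and the treatment of \eqref{eq:initailrel} via \eqref{ass:initialapprox} are exactly the details the cited references supply.
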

	\begin{proof}
	    The proof is in \ref{appA}.
	\end{proof}
	
	\begin{lemma}\label{lemma:delta1}
		Let $\d_{1,n}$ and $\d_{1,n-1}^{+}$ be defined by \eqref{eqs:deltasinTheorem}. For $n=2,...,N$ it holds that
		\begin{align}
			\d_{1,n}-\d_{1,n-1}^{+}\leq \d_{1,n}-\d_{1,n-1}+c\t_{n-1}\t_{n-1}^{2(k+1)}((\varepsilon_{t}^{n-1,4})^{2}+(\varepsilon_{t}^{n-1,3})^{2}),
		\end{align}
		where $\varepsilon_{t}^{n-1,3}=\|\p_t^{k+2}\bmu\|_{L^{\infty}(I_{n-1};\bm{H}^{1})}$ and $\varepsilon_{t}^{n-1,4}=\|\p_t^{k+1}\bmu\|_{L^{\infty}(I_{n-1};\bm{H}^{1})}$. When $n=1$, it holds that 
		\begin{equation}\label{eq:deltainitial1}
			|\d_{1,1}-\d_{1,0}^{+}|\leq c\t^{2(k+1)}\|\p_t^{k+1}\bmu_0\|^{2}_{1}+ch^{2(r+1)}\|\bmu_0\|_{r+1}^{2}+c\t^{2(k+1)}\|\p_t^{k+1}\bmu(t_1)\|^{2}_{1}+\e\|\del\bmEth^{1}(t_1)\|^{2},
		\end{equation}
		for a sufficiently small $\e>0$. 
	\end{lemma}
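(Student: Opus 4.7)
My plan is to treat the two cases $n\geq 2$ and $n=1$ separately, since the first requires a jump analysis at the time mesh point $t_{n-1}$ while the second reduces to the approximation assumptions on the initial data. For $n\geq 2$, I first decompose
\[
\delta_{1,n}-\delta_{1,n-1}^+ \;=\; \bigl(\delta_{1,n}-\delta_{1,n-1}\bigr) \;+\; \bigl(\delta_{1,n-1}-\delta_{1,n-1}^+\bigr),
\]
so that only the cross--interval jump $\delta_{1,n-1}-\delta_{1,n-1}^+$ needs to be bounded. Since $\bmu_{\t,h}$ is globally continuous in time (it lies in $X_\t^k$), any discontinuity in $\bmE_{\t,h}^1$ at $t_{n-1}$ is inherited from the piecewise construction of $\bmw_1$ in \cref{def:specialapprox}: both the constant offset $\bm{R}_h\bmu(t_{m-1})$ and the lower integration limit $t_{m-1}$ change when passing from $I_{n-1}$ to $I_n$.

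Next I would split the jump into two natural summands,
\[
\delta_{1,n-1}-\delta_{1,n-1}^+ \;=\; \langle A_h[\bm{T}_{II}(t_{n-1})-\bm{T}_{II}(t_{n-1}^+)],\,\bmE_{\t,h}^1(t_{n-1}^+)\rangle \;+\; \langle A_h\bm{T}_{II}(t_{n-1}),\,\bmE_{\t,h}^1(t_{n-1})-\bmE_{\t,h}^1(t_{n-1}^+)\rangle,
\]
and estimate each factor separately. For the jump of $\bm{T}_{II}$, a Taylor expansion of $\bmu$ about $t_{n-1}$ together with \eqref{eq:lagrangestability} gives the bound $c\,\t_{n-1}^{k+1}\|\p_t^{k+1}\bmu\|_{L^\infty(I_{n-1};\bm{H}^1)} = c\,\t_{n-1}^{k+1}\varepsilon_t^{n-1,4}$. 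For the jump of $\bmw_1$ (and hence of $\bmE_{\t,h}^1$), a direct computation from \cref{def:specialapprox} identifies it with a quadrature error of the form $\int_{t_{n-2}}^{t_{n-1}}(I-I_\t)\bm{R}_h\p_t\bmu\,ds$, which by Gauss--Lobatto exactness and \eqref{eq:lagrangestability} is bounded by $c\,\t_{n-1}^{k+2}\|\p_t^{k+2}\bmu\|_{L^\infty(I_{n-1};\bm{H}^1)} = c\,\t_{n-1}^{k+2}\varepsilon_t^{n-1,3}$. Cauchy--Schwarz, the norm equivalence \eqref{eq:norm}, and Young's inequality then combine these bounds into the desired estimate $|\delta_{1,n-1}-\delta_{1,n-1}^+|\leq c\,\t_{n-1}\cdot\t_{n-1}^{2(k+1)}\bigl((\varepsilon_t^{n-1,4})^2+(\varepsilon_t^{n-1,3})^2\bigr)$.

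For $n=1$, I would directly estimate
\[
\delta_{1,1}-\delta_{1,0}^+ \;=\; \langle A_h\bm{T}_{II}(t_1),\bmE_{\t,h}^1(t_1)\rangle \;-\; \langle A_h\bm{T}_{II}(t_0^+),\bmE_{\t,h}^1(t_0^+)\rangle.
\]
The first summand is controlled by Cauchy--Schwarz, the interpolation estimate \eqref{eq:lagrangestability} applied to $\bm{T}_{II}$ on $I_1$, and Young's inequality with a small $\varepsilon$ against $\|\del\bmE_{\t,h}^1(t_1)\|^2$, producing the absorbable $\varepsilon$--term in \eqref{eq:deltainitial1}. The second is handled through the initial approximation property \eqref{ass:initialapprox} combined with the elliptic projection bound \eqref{eq:ellipticprojectionerrorVector}, yielding the contribution $ch^{2(r+1)}\|\bmu_0\|_{r+1}^2$, while the $\t^{2(k+1)}\|\p_t^{k+1}\bmu_0\|_1^2$ term comes from the Taylor remainder of $\bm{T}_{II}$ evaluated at $t_0^+$.

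The main obstacle I expect is extracting the extra factor of $\t_{n-1}$ that turns $\t_{n-1}^{2k+2}$ into $\t_{n-1}^{2k+3}$ in the claimed bound: a naive Taylor estimate on both factors of the inner product yields only the weaker exponent, which would break the downstream discrete Gronwall argument in the proof of \eqref{eq:introerror}. This extra power has to come from exploiting the Gauss--Lobatto exactness of $I_\t$, so that the jump of $\bmw_1$ across $t_{n-1}$ collapses to an integrated quadrature error of order $\t_{n-1}^{k+2}$ rather than a pointwise value of order $\t_{n-1}^{k+1}$.
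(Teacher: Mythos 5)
Your treatment of the decisive term --- identifying the jump of $\bmw_1$ across $t_{n-1}$ with the integrated interpolation error $\int_{t_{n-2}}^{t_{n-1}}\bmR_h(\p_t\bmu-I_\t\p_t\bmu)\,ds$ of order $\t_{n-1}^{k+2}$ --- and your handling of the case $n=1$ coincide with the paper's argument. The gap is in the other half of your jump decomposition. The summand $\langle \bm{A}_h(\bm{T}_{II}(t_{n-1})-\bm{T}_{II}(t_{n-1}^{+})),\bmE_{\t,h}^{1}(t_{n-1}^{+})\rangle$ pairs the jump of $\bm{T}_{II}$ with the full discrete error $\bmE_{\t,h}^{1}(t_{n-1}^{+})$, which at this stage is not small: it is one of the quantities the downstream Gronwall argument is meant to control. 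Bounding the jump of $\bm{T}_{II}$ by $c\t_{n-1}^{k+1}\varepsilon_t^{n-1,4}$ and applying Cauchy--Schwarz and Young therefore yields at best $c\t_{n-1}^{2(k+1)}(\varepsilon_t^{n-1,4})^{2}+\e\norm{\bmE_{\t,h}^{1}(t_{n-1}^{+})}_e^{2}$, which is missing the extra factor $\t_{n-1}$ needed when the estimates are summed over $n$, and carries an $\e\norm{\bmE_{\t,h}^{1}(t_{n-1}^{+})}_e^{2}$ contribution that is not damped by $\t_{n-1}$ and hence cannot be absorbed without spoiling the discrete Gronwall step. The combination you describe does not close for this term.

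The repair is to observe that this summand vanishes identically: the Lagrange interpolant $I_\t$ is built on the Gauss--Lobatto nodes, which include both endpoints of every subinterval, so $\bm{T}_{II}=I_\t\bmu-\bmu$ has the same one-sided limits at $t_{n-1}$ (indeed both vanish), i.e.\ $\bm{T}_{II}$ has no jump there. This is exactly what the paper's proof uses implicitly when it writes $\d_{1,n-1}^{+}=\langle \bm{A}_h(I_\t\bmu(t_{n-1})-\bmu(t_{n-1})),\bmR_h\bmu(t_{n-1})-\bmu_{\t,h}(t_{n-1})\rangle$ with a single-valued first slot and only tracks the discrepancy $\bmR_h\bmu(t_{n-1})-\bmw_1(t_{n-1})$ in the second slot; that discrepancy is precisely your remaining summand. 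Once this is inserted, your argument reduces to the paper's, and the product of the two factors, $\t_{n-1}^{k+1}\varepsilon_t^{n-1,4}\cdot\t_{n-1}^{k+2}\varepsilon_t^{n-1,3}=\t_{n-1}\,\t_{n-1}^{2(k+1)}\varepsilon_t^{n-1,4}\varepsilon_t^{n-1,3}$, delivers the claimed bound via Young's inequality.
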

    \begin{proof}
        The proof is in \ref{appA}.
    \end{proof}
	
	Estimates for $\d_{2,n}$ and $\d_{2,n-1}^{+}$ for non-equal order inf-sup stable elements can be found in \cite[Lemma 4.7]{bause2024}. The corresponding estimate for equal-order elements is only order $h^{r}$ in space and can be shown similarly.
	\begin{lemma}\label{lemma:delta}
		Let $\d_{2,n}$ and $\d_{2,n-1}^{+}$ be defined by \eqref{eqs:deltasinTheorem}. For $n=2,...,N$ it holds that
		\begin{align}
			\d_{2,n}-\d_{2,n-1}^{+}\leq \d_{2,n}-\d_{2,n-1}+c\t_{n-1}\t_{n-1}^{2(k+1)}(\varepsilon_{t}^{n-1,3})^{2}+c\t_{n-1}h^{2(r)}(\varepsilon_{x}^{n-1,2})^{2},
		\end{align}
		where $\varepsilon_{t}^{n-1,3}$ is defined in \Cref{lemma:delta1} and $\varepsilon_{x}^{n-1,2}$ is defined in \Cref{lemma:newname}. When $n=1$, it holds that 
		\begin{equation}\label{eq:deltainitial2}
			|\d_{2,1}-\d_{2,0}^{+}|\leq ch^{2(r)}(h\|p_0\|^{2}_{r+1}+h\|p(t_1)\|_{r+1}^{2}+\|\bmu_0\|^{2}_{r+1})+\e\norm{\bmE_{\t,h}^{1}(t_1)}_e^{2},
		\end{equation}
		for a sufficiently small $\e>0$. 
	\end{lemma}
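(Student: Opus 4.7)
My plan is to reduce the inequality to a bound on the jump $\delta_{2,n-1}^{+} - \delta_{2,n-1}$, which I then estimate using the interval-wise definition of $\bmw_1$ together with interpolation estimates. The first observation is that $I_{\t}R_h p \in X_{\t}^{k}(L^{2})$ is globally continuous in time (its interpolation nodes include the interval endpoints), so the interpolation error $\om = p - I_{\t}R_h p$ is continuous, and likewise $\bmu_{\t,h} \in X_{\t}^{k}(\bm{V}_h^{r})$ is continuous at $t_{n-1}$. Consequently $\om(t_{n-1}^{+}) = \om(t_{n-1})$ and $\bmu_{\t,h}(t_{n-1}^{+}) = \bmu_{\t,h}(t_{n-1})$, so the only jump contribution comes from $\bmw_1$:
\begin{equation*}
\delta_{2,n-1}^{+} - \delta_{2,n-1} = \a\,\bigl\langle\om(t_{n-1}),\,\div\bigl(\bmw_1(t_{n-1}^{+}) - \bmw_1(t_{n-1})\bigr)\bigr\rangle.
\end{equation*}

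Next I would compute the $\bmw_1$-jump explicitly. Applying \Cref{def:specialapprox} on $I_n$ yields $\bmw_1(t_{n-1}^{+}) = \bm{R}_h\bmu(t_{n-1})$, while on $I_{n-1}$ (using that $t_{n-1}$ is a Gauss--Lobatto node, so $I_{\t}$ preserves values there) one has $\bmw_1(t_{n-1}) = \bm{R}_h\bmu(t_{n-2}) + \int_{t_{n-2}}^{t_{n-1}} \bmw_2\,ds$. Substituting $\bmw_2 = I_{\t}\bm{R}_h\p_t\bmu$, commuting $\bm{R}_h$ with the time integral, and using the fundamental theorem of calculus, I arrive at
\begin{equation*}
\bmw_1(t_{n-1}^{+}) - \bmw_1(t_{n-1}) = \bm{R}_h \int_{t_{n-2}}^{t_{n-1}} \bigl(\p_t\bmu - I_{\t}\p_t\bmu\bigr)\,ds.
\end{equation*}
The $H^{1}$-stability of the elliptic projection together with \eqref{eq:lagrangestability} then give $\|\del(\bmw_1(t_{n-1}^{+}) - \bmw_1(t_{n-1}))\| \leq c\,\t_{n-1}^{k+2}\varepsilon_{t}^{n-1,3}$, while $\om(t_{n-1}) = p(t_{n-1}) - R_h p(t_{n-1})$ combined with \eqref{eq:ellipticprojectionerrorScalar} yields $\|\om(t_{n-1})\| \leq c\,h^{r+1}\varepsilon_{x}^{n-1,2}$. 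Cauchy--Schwarz followed by a weighted Young's inequality with weight $1/\t_{n-1}$, together with the trivial bound $h^{r+1}\leq h^{r}$ (for $h\leq 1$), then delivers the claimed estimate.

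For $n=1$, I would evaluate $\delta_{2,0}^{+} = \a\langle p(0) - R_h p(0),\,\div(\bm{R}_h\bmu_0 - \bmu_{0,h})\rangle$ directly; Cauchy--Schwarz, \eqref{eq:ellipticprojectionerrorScalar} and \eqref{ass:initialapprox} give $|\delta_{2,0}^{+}| \leq c\,h^{r+1}\|p_0\|_{r+1}\cdot h^{r}\|\bmu_0\|_{r+1} \leq c\,h^{2r}\bigl(h\|p_0\|_{r+1}^{2} + \|\bmu_0\|_{r+1}^{2}\bigr)$ via $2ab \leq a^{2} + b^{2}$. For $\delta_{2,1}$, the same Cauchy--Schwarz estimate combined with a small Young step absorbing $\e\norm{\bmE_{\t,h}^{1}(t_1)}_e^{2}$ into the left-hand side produces the remaining terms of \eqref{eq:deltainitial2}.

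The main obstacle is the weighted Young step: the natural estimates of $\om(t_{n-1})$ and of the $\bmw_1$-jump yield factors $h^{2(r+1)}$ and $\t^{2(k+2)}$, each one power higher than the right-hand side of the lemma permits. The extra $h$ has to be absorbed via $h\leq 1$, which is precisely the mechanism producing the suboptimal $h^{2r}$ rate in the equal-order setting flagged in the remark preceding the lemma, while the extra $\t$ is absorbed into the Young weight $1/\t_{n-1}$; both absorptions must be performed simultaneously so that the separation of the temporal and spatial rates in the final bound is preserved.
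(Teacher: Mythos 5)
Your proof is correct and takes essentially the approach the paper intends: the paper itself gives no proof of this lemma, deferring to \cite[Lemma 4.7]{bause2024} with the remark that the equal-order case ``can be shown similarly'' to \Cref{lemma:delta1}, and your argument --- using continuity of $\om$ and $\bmu_{\t,h}$ to isolate the jump of $\bmw_1$ at $t_{n-1}$, identifying that jump as $\int_{t_{n-2}}^{t_{n-1}}\bm{R}_{h}(\p_t\bmu - I_{\t}\p_t\bmu)\,ds$, and closing with the projection/interpolation estimates and a $\t_{n-1}$-weighted Young inequality --- is exactly the adaptation of the appendix proof of \Cref{lemma:delta1} to $\d_2$. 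The handling of the surplus powers of $\t$ (absorbed into the Young weight) and of $h$ (absorbed via $h\leq 1$, which is the source of the suboptimal $h^{r}$ rate here) is done correctly, including the $n=1$ case.
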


	\subsection{Main error estimate}
We begin by stating the Gronwall inequality used in the main error estimate.
\begin{lemma}[Discrete Gronwall inequality {\cite[Lemma 1.4.2]{quarteroni1994}}]
    Assume $k_n$ is a non-negative sequence, and that the sequence $\phi_n$ satisfies
    \begin{align}
            \phi_0 &\leq g_0, \\
             \phi_n&\leq g_0+\sum_{s=0}^{n-1}p_s+\sum_{s=0}^{n-1}k_s\phi_s,\quad n\geq 1. 
    \end{align}
    Then $\phi_n$ satisfies
    \begin{align}
            \phi_1 &\leq g_0(1+k_0)+p_0, \\
             \phi_n&\leq g_0\prod_{s=0}^{n-1}(1+k_s)+\sum_{s=0}^{n-2}p_s\prod_{j=s+1}^{n-1}(1+k_j)+p_{n-1},\quad n\geq 2. 
    \end{align}
    Moreover, if $g_0\geq 0$ and $p_n\geq 0$ for $n\geq 0$, it follows that
    \begin{align}
        \phi_{n}\leq (g_0+\sum_{s=0}^{n-1}p_s \exp{(\sum_{s=0}^{n-1}k_s)},\quad n\geq 1.
    \end{align}
\end{lemma}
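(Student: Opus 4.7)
The plan is to prove the discrete Gronwall inequality by introducing an auxiliary dominating sequence, deriving a simple one-step recurrence for it, and then unfolding that recurrence inductively; the exponential form follows by the elementary bound $1+x \leq e^x$.

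First I would set
\begin{equation*}
\psi_n := g_0 + \sum_{s=0}^{n-1}p_s + \sum_{s=0}^{n-1}k_s\phi_s \quad (n \geq 1), \qquad \psi_0 := g_0,
\end{equation*}
so that the hypothesis gives $\phi_n \leq \psi_n$ for every $n \geq 0$. The advantage of $\psi_n$ over $\phi_n$ is that it satisfies a purely linear one-step recurrence:
\begin{equation*}
\psi_{n+1} - \psi_n = p_n + k_n\phi_n \leq p_n + k_n\psi_n,
\end{equation*}
i.e.\ $\psi_{n+1} \leq (1+k_n)\psi_n + p_n$, where I have used $k_n \geq 0$ together with $\phi_n \leq \psi_n$. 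The base case $n=1$ then drops out immediately, since $\phi_1 \leq \psi_1 = g_0 + p_0 + k_0\phi_0 \leq (1+k_0)g_0 + p_0$, using the assumption $\phi_0 \leq g_0$.

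For $n \geq 2$ I would iterate the one-step recurrence $\psi_m \leq (1+k_{m-1})\psi_{m-1} + p_{m-1}$ from $m=n$ down to $m=2$ and then apply the base-case bound on $\psi_1$. A routine induction shows that each $p_s$ picks up exactly the factor $\prod_{j=s+1}^{n-1}(1+k_j)$, while the initial value $g_0$ collects the full product $\prod_{s=0}^{n-1}(1+k_s)$, yielding
\begin{equation*}
\phi_n \leq \psi_n \leq g_0\prod_{s=0}^{n-1}(1+k_s) + \sum_{s=0}^{n-2}p_s\prod_{j=s+1}^{n-1}(1+k_j) + p_{n-1},
\end{equation*}
which is the claimed middle estimate.

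For the final exponential statement I would invoke $1+x \leq e^x$ for every $x \in \R$, so that each product $\prod_{j=s+1}^{n-1}(1+k_j)$ is bounded by $\exp(\sum_{j=s+1}^{n-1}k_j) \leq \exp(\sum_{s=0}^{n-1}k_s)$, the last inequality using $k_j \geq 0$ to enlarge the index range. Under $g_0 \geq 0$ and $p_n \geq 0$, the same exponential factor majorises the isolated term $p_{n-1}$ as well, and collecting contributions gives $\phi_n \leq (g_0 + \sum_{s=0}^{n-1}p_s)\exp(\sum_{s=0}^{n-1}k_s)$. The only real obstacle is bookkeeping: keeping the index ranges in the nested products and sums correctly aligned so that the unfolded form matches the statement exactly, including the detached last term $p_{n-1}$. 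Once that is handled, no analytical difficulty remains.
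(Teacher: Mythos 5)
Your proof is correct. The paper does not prove this lemma at all --- it is quoted directly from \cite[Lemma 1.4.2]{quarteroni1994} --- and your argument (auxiliary majorant $\psi_n$, one-step recurrence $\psi_{n+1}\leq(1+k_n)\psi_n+p_n$, unfolding by induction, then $1+x\leq e^x$ with $k_j\geq 0$, $p_j\geq 0$ to absorb the detached $p_{n-1}$ term) is the standard proof of that cited result, with the index bookkeeping handled correctly.
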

    
	Now, we can prove the main result of this work.
	\begin{theorem}\label{maintheorem}
		For the approximation $(\bmu_{\t,h},\bmv_{\t,h},p_{\t,h},\Psi_{\t,h})$ defined by \Cref{prob:quadratureform} of a sufficiently smooth solution to \eqref{eqs:biotallardADE}, the following estimate holds
		\begin{align}\label{eq:mainestimate}
			\|\del(\bmu(t)-\bmu_{\t,h}(t))\|+\left\|\bm{N}^{\frac12}\begin{pmatrix}
					\bmv(t)-\bmv_{\t,h}(t)\\\Psi(t)-\Psi_{\t,h}(t)
				\end{pmatrix}\right\|+c_0\|p(t)-p_{\t,h}(t)\|\leq c\left(\t^{k+1}+h^{r}\right),
		\end{align}
		for $t\in I$.
	\end{theorem}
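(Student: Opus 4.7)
The plan is to obtain the error bound by combining the triangle-inequality split with the discrete-error machinery developed in Lemmas~\ref{lemma:varerror}--\ref{lemma:delta}. First, for any $t\in\bar I$ lying in some subinterval $I_n$, we use the decomposition \eqref{eqs:errorsplit} and the triangle inequality to write the total error as a sum of the interpolation errors $\bm{\eta}_1,\bm{\eta}_2,\om,\bm{\zeta}$ and the discrete errors $\bmE_{\t,h}^1, \bmE_{\t,h}^2, e_{\t,h}, \bmE_{\t,h}^3$. The interpolation parts are already controlled in the desired orders $\t^{k+1}+h^{r}$ (respectively $h^{r+1}$ for the lower-order terms) by Lemma~\ref{lemma:splittingestimate}, combined with the norm equivalence \eqref{eq:norm} on $\bmE_{\t,h}^1$. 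Thus it remains to bound the discrete-error quantity
\[
\Lambda_n := \norm{\bmE_{\t,h}^1(t_n)}_e^2 + \tfrac12\Bigl\|\bm{N}^{1/2}(\bmE_{\t,h}^2(t_n),\bmE_{\t,h}^3(t_n))^\top\Bigr\|^2 + \tfrac{c_0}{2}\|e_{\t,h}(t_n)\|^2
\]
in the combined energy norm, uniformly in $n$.

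The main step is to absorb the $L^2(I_n;\cdot)$ terms on the right-hand side of the stability estimate \eqref{eq:stabilityestimate} into the corresponding left-hand side contributions by invoking Lemma~\ref{lemma:newname}. Applying \eqref{eq:estimate} to the terms $c\norm{\bmE_{\t,h}^1}_{L^2(I_n;\bm L^2)}^2$, $c\|e_{\t,h}\|_{L^2(I_n;L^2)}^2$ and $\varepsilon\|\bm{N}^{1/2}(\bmE_{\t,h}^2,\bmE_{\t,h}^3)^\top\|_{L^2(I_n;\bm L^2)}^2$ appearing in \eqref{eq:stabilityestimate}, we obtain an estimate in which these interior $L^2$-in-time norms are replaced by $\t_n\Lambda_{n-1}^+$ (the value at $t_{n-1}^+$) plus the consistency error $c\t_n(\t_n^{2(k+1)}(\varepsilon_t^{n,1})^2 + h^{2r}(\varepsilon_x^{n,0})^2)$. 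Choosing $\varepsilon$ sufficiently small, this yields
\[
\Lambda_n \le \Lambda_{n-1}^+ + \d_{1,n}-\d_{1,n-1}^+ + \d_{2,n}-\d_{2,n-1}^+ + c\t_n\Lambda_{n-1}^+ + c\bigl(\t_n^{2(k+1)}(\varepsilon_t^{n,1})^2 + h^{2r}(\varepsilon_x^{n,0})^2\bigr).
\]

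The third step is to convert the one-sided values $\Lambda_{n-1}^+$ and the $\d_{\cdot,n-1}^+$ terms into values at the previous grid node $t_{n-1}$. For $n\ge 2$ Lemma~\ref{lemma:estimation2} gives $\Lambda_{n-1}^+ \le (1+\t_{n-1})\Lambda_{n-1} + c\t_{n-1}^{2(k+1)}(\varepsilon_t^{n-1,2})^2$, while Lemmas~\ref{lemma:delta1} and~\ref{lemma:delta} telescope the $\d$-terms by bounding $\d_{i,n}-\d_{i,n-1}^+ \le \d_{i,n}-\d_{i,n-1} + c\t_{n-1}(\t_{n-1}^{2(k+1)}+h^{2r})\varepsilon^2$. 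For $n=1$ the estimates \eqref{eq:initailrel}, \eqref{eq:deltainitial1}, \eqref{eq:deltainitial2} provide the initial contribution of order $\t^{2(k+1)}+h^{2r}$ (the small $\varepsilon\norm{\bmE_{\t,h}^1(t_1)}_e^2$ remainders being absorbed into $\Lambda_1$). Summing from $n=1$ to any $N$, the $\d$-telescoping collapses to $\d_{i,N}-\d_{i,0}$, whose remaining boundary pieces are controlled by Young's inequality and absorbed into $\Lambda_N$ or into the data term.

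Finally, defining $g_0 = c(\t^{2(k+1)}+h^{2r})$ and $p_s, k_s$ from the above recursion, the discrete Gronwall inequality yields $\Lambda_n \le c(\t^{2(k+1)}+h^{2r})\exp(cT)$, uniformly in $n$. Taking square roots and combining with the interpolation bounds from Lemma~\ref{lemma:splittingestimate} evaluated via the continuous embedding $C(\bar I_n;\cdot)\hookrightarrow L^\infty(I_n;\cdot)$ gives \eqref{eq:mainestimate} at the time-grid points; extension to arbitrary $t\in I_n$ follows from the continuity of $\bmu_{\t,h},\bmv_{\t,h},p_{\t,h},\Psi_{\t,h}$ and the inverse inequality \eqref{eq:LInfinityLTwo}. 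The main obstacle is the bookkeeping of the coupling cancellations and the simultaneous treatment of $(\bmE_{\t,h}^2,\bmE_{\t,h}^3)$ under $\bm N$; once the key identities from the proofs of Theorem~\ref{theorem:stability} and Lemma~\ref{lemma:newname} are in hand, the Gronwall step is routine.
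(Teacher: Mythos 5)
Your proposal is correct and follows essentially the same route as the paper's proof: combine the stability estimate of \Cref{theorem:stability} with \Cref{lemma:newname} to absorb the interior $L^2$-in-time terms, pass from $t_{n-1}^{+}$ to $t_{n-1}$ via \Cref{lemma:estimation2}, telescope the $\delta$-terms with \Cref{lemma:delta1} and \Cref{lemma:delta}, apply the discrete Gronwall inequality, and conclude with the $L^{\infty}$--$L^{2}$ relation \eqref{eq:LInfinityLTwo} and the interpolation bounds of \Cref{lemma:splittingestimate}. The only cosmetic difference is your bookkeeping of the telescoped $\delta$-sum (the paper handles $n=1$ separately so the sum collapses to $\d_{i,n}-\d_{i,1}$ rather than $\d_{i,N}-\d_{i,0}$), which does not affect the argument.
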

	\begin{proof}
    First, we denote 
    \begin{align*}
        A_n:=\norm{\bmE_{\t,h}^{1}(t_{n})}_e^{2}+\frac12\left\|\bm{N}^{\frac12}\begin{pmatrix}
					\bmE^{2}_{\t,h}(t_n)\\\bmEth^{3}(t_n)
				\end{pmatrix}\right\|_{\bm{L}^{2}}^{2}+\frac{c_0}{2}\|e_{\t,h}(t_n)\|_{L^{2}}^{2},\quad \mbox{for } n=0,1,...,N.
    \end{align*}
		By combing the stability estimate \Cref{theorem:stability} and \Cref{lemma:newname} and the norm equivalence \eqref{eq:norm} we get
		\begin{equation}\label{eq:maintheoremstepOne}
			\begin{aligned}
				A_n	+C_{\e}\|\bmEth^{3}\|_{L^{2}(I_n:\bm{L}^{2})}^{2}
				&\leq (1+c\t_n)\left(\norm{\bmE_{\t,h}^{1}(t^{+}_{n-1})}_e^{2}+\frac{c_0}{2}\|e_{\t,h}(t_{n-1}^{+})\|_{L^{2}}^{2}+\frac{1}{2}\left\|\bm{N}^{\frac12}\begin{pmatrix}
					\bmE^{2}_{\t,h}(t_{n-1}^{+})\\\bmEth^{3}(t_{n-1}^{+})
				\end{pmatrix}\right\|_{\bm{L}^{2}}^{2}\right)\\
                &\quad+\delta_{1,n}-\delta_{1,n-1}^{+}+\delta_{2,n}-\delta_{2,n-1}^{+}
				+c(1+\t_n)\left(\t_{n}^{2(k+1)}(\varepsilon_{t}^{n,1})^{2}+ch^{2(r)}(\varepsilon_{x}^{n,0})^{2}\right)
			\end{aligned}
		\end{equation}
		for $n=1,...,N$.
		Then by \eqref{eq:lemma49}, \Cref{lemma:delta1} and \Cref{lemma:delta}, we obtain for $n=2,...,N$
		\begin{equation}\label{eq:maintheoremstepTwo}
			\begin{aligned}
				A_n
				&\leq(1+c\t_n)(1+\t_{n-1})A_{n-1}
                +\delta_{1,n}-\delta_{1,n-1}+\delta_{2,n}-\delta_{2,n-1}
                +c(1+\t_n)\left(\t_{n}^{2(k+1)}(\varepsilon_{t}^{n,1})^{2}+ch^{2(r)}(\varepsilon_{x}^{n,0})^{2}\right)
                \\
                &\quad+c\t_{n-1}\t_{n-1}^{2(k+1)}((\varepsilon_{t}^{n-1,4})^{2}+(\varepsilon_{t}^{n-1,3})^{2})+c\t_{n-1}\t_{n-1}^{2(k+1)}(\varepsilon_{t}^{n-1,3})^{2}+c\t_{n-1}h^{2(k+1)}(\varepsilon_{x}^{n-1,2})^{2}.
			\end{aligned}
		\end{equation}
		Next, we consider the case of $n=1$. 
		Then by \eqref{eq:deltainitial1}, \eqref{eq:deltainitial2} and \eqref{eq:initailrel} inserted into \eqref{eq:maintheoremstepOne} for sufficiently smooth solutions we have
		\begin{equation}\label{eq:maintheoremstepFour}
			A_1=\,\norm{\bmE_{\t,h}^{1}(t_1)}_e^{2}+\frac12\left\|\bm{N}^{\frac12}\begin{pmatrix}
					\bmE^{2}_{\t,h}(t_1)\\\bmEth^{3}(t_1)
				\end{pmatrix}\right\|_{\bm{L}^{2}}^{2}+\frac{c_0}{2}\|e_{\t,h}(t_1)\|_{L^{2}}^{2}	
			\leq c\left(\t_1^{2(k+1)}+h^{2r}\right).
		\end{equation}
		 Next, we apply the discrete Gronwall inequality \cite[Lemma 1.4.2]{quarteroni1994} to \eqref{eq:maintheoremstepTwo} and \eqref{eq:maintheoremstepFour}. Therefore, we sum up the inequality \eqref{eq:maintheoremstepTwo} from $j=2$ to $j=n$, this gives
		\begin{equation}
			\begin{aligned}
				A_{n}&\leq |\d_{2,1}|+|\d_{2,n}|+|\d_{1,1}|+|\d_{1,n}|+\sum_{j=2}^{n}(c\t_{j}+\t_{j-1}+c\t_{j}\t_{j-1})A_{j-1}\\
                &\quad+(1+\t)\left(c\t^{2(k+1)}+ch^{2(r)}\right)\left(\sum_{j=1}^{n}\left((\varepsilon_x^{n,0})^{2}+(\varepsilon_t^{n,1})^{2}\right)\right).
			\end{aligned}
		\end{equation}
		Further, by \eqref{eq:notationsummation}, the last term on the right-hand side is smaller than or equal to a constant $c< \infty$.
  Now combining \eqref{eq:delta11} with \eqref{eq:maintheoremstepFour} we see that
  \begin{subequations}\label{eq:deltainit}
  \begin{equation}
      |\delta_{1,1}|\leq c\left(\t_1^{2(k+1)}+h^{2(r)}\right).
  \end{equation}
  Similarly to \cite[Eq. (4.80)]{bause2024}, we also have 
   \begin{equation}
      |\delta_{2,1}|\leq c\left(\t_1^{2(k+1)}+h^{2(r)}\right).
  \end{equation}
  \end{subequations}
  Furthermore, by \eqref{eqs:deltasinTheorem}, \eqref{eqs:errorsplit}, and the Cauchy-Schwarz and Young inequalities with $\e>0$ sufficiently small along with \eqref{eq:ellipticprojectionerrorScalar} and \eqref{eq:lagrangestability} it holds that
  \begin{equation}
      \begin{aligned}
          |\d_{1,n}|\leq \,c\t_n^{2(k+1)}+\e A_n,\quad \mbox{ and }\quad
      |\d_{2,n}|\leq \,ch^{2(r+1)}+\e A_n.
      \end{aligned}
  \end{equation}
		We also have
		\begin{equation}\label{eq:expgronwall}
			\begin{aligned}
				\prod_{j=1}^{n-1}(1+c\t_j)\leq& \,e^{cT}.
			\end{aligned}
		\end{equation}
  Then, combining the Gronwall argument with \eqref{eq:deltainit} to \eqref{eq:expgronwall} and assumption \eqref{ass:initialapprox} results in
  \begin{equation}\label{eq:gronwallconclusion}
      \begin{aligned}
          &\norm{\bmE_{\t,h}^{1}(t_{n})}_e^{2}+\frac12\left\|\bm{N}^{\frac12}\begin{pmatrix}
					\bmE^{2}_{\t,h}(t_n)\\\bmEth^{3}(t_n)
				\end{pmatrix}\right\|_{\bm{L}^{2}}^{2}+\frac{c_0}{2}\|e_{\t,h}(t_n)\|_{L^{2}}^{2}	\leq c\left(\t^{2(k+1)}+h^{2(r)}\right),\quad\mbox{for } n=0,...,N,
      \end{aligned}
  \end{equation}
  where $\t=\max_{n\in\{1,...,N\}}\t_n$. By \Cref{lemma:estimation2}, \Cref{lemma:newname}, \eqref{eq:gronwallconclusion} and the norm equivalence \eqref{eq:norm}, we obtain
  \begin{equation}
      \begin{aligned}
          &\norm{\bmE_{\t,h}^{1}}_{L^{2}(I_n;\bm{L}^{2})}^{2}+\left\|\bm{N}^{\frac12}\begin{pmatrix}
					\bmE^{2}_{\t,h}\\\bmEth^{3}
				\end{pmatrix}\right\|^{2}_{L^{2}(I_n;\bm{L}^{2})}+c_0\|e_{\t,h}\|^{2}_{L^{2}(I_n;L^{2})}	\leq c\t\left(\t^{2(k+1)}+h^{2(r)}\right),\quad\mbox{for } n=2,...,N.
      \end{aligned}
  \end{equation}
  When $n=1$, we obtain the above estimate by using \Cref{lemma:newname} combined with \eqref{eq:initailrel} and \eqref{eq:norm}. Then the $L^{\infty}-L^{2}$ relation \eqref{eq:LInfinityLTwo} gives
  \begin{equation}
      \norm{\bmE_{\t,h}^{1}(t)}^{2}+\left\|\bm{N}^{\frac12}\begin{pmatrix}
					\bmE^{2}_{\t,h}(t)\\\bmEth^{3}(t)
				\end{pmatrix}\right\|^{2}+c_0\|e_{\t,h}(t)\|^{2}\leq c\left(\t^{2(k+1)}+h^{2(r)}\right),\quad \mbox{for }t\in[0,T].
  \end{equation}
  Then, by applying the triangle inequality to the splitting of the errors \eqref{eqs:errorsplit} and using the estimates in \Cref{lemma:splittingestimate}, we obtain the estimate \eqref{eq:mainestimate}.
		
	\end{proof}

\begin{remark}
    We note that the error estimate \eqref{eq:mainestimate} is of optimal order with respect to the approximation in space and time. Here, the error is measured in terms of the first-order energy of the variables $\{\boldsymbol u, \boldsymbol v, p, \Psi\}$ of the system \eqref{eqs:firstorderADE}. In particular, an error bound for the displacement field $\boldsymbol u$ in the $\boldsymbol{H}^1$ seminorm is part of the estimate. The error norm occurs naturally in our energy-type error analysis and is due to a stability argument similarly to the one of \eqref{eq:stabilityestimate}. The error constant $c$ depends on norms of the continuous solutions that are introduced in \Cref{lemma:splittingestimate} and \Cref{lemma:estimation} to \Cref{lemma:delta}, but not on the discretization parameters $h$ and $\tau$. 
\end{remark}

    \begin{remark}[Non-equal order approximation]\,
       \begin{itemize}
           \item Using a non-equal order approximation in space for $(\bmu,\bmv, p,\Psi)\in (\bm{V}_h^{r+1}\times\bm{V}_h^{r+1}\times V_h^{r}\times\bm{V}_h^{r+1})$ built upon the Taylor-Hood pair of elements satisfies the inf-sup stability condition. This is important in the case of vanishing coefficients $c_0\to 0$ and $\calA\to 0$ as we gain a Stokes-like structure to the system, cf. \cite{volker2016}, when it is known that $\p_t\bmu$ and $p$ have stability issues. Therefore, regarding parameter robustness, equal-order spatial discretizations are less feasible without additional stabilization. For a more thorough discussion of the stability properties in the case of the quasi-static Biot system, we refer to, e.g., \cite{murad1994,rodrigo2018,hong2018,murad1992}. However, in applications where memory effects become relevant, the dynamic permeability does not vanish.  In \Cref{sec:numerics}, we have included a numerical experiment with inf-sup stable elements for the sake of completeness.
   \item Based on the numerical results in \Cref{table:error}, we conjecture that the error estimate \eqref{eq:conjecture} holds for non-equal order inf-sup stable elements. By comparing the estimates in \Cref{lemma:estimation} with the conjecture \eqref{eq:conjecture}, it is apparent that the gradient of interpolation error of the pressure, i.e. $\langle\del\om,\bmPth^{3}\rangle$, causes a loss of one spatial order in the analysis. We expect that proving the conjecture can be done by building upon \Cref{maintheorem}. The proof of the conjecture remains an open problem and is left as work for the future. 
    \end{itemize}
    \end{remark}
\section{Numerical results}\label{sec:numerics}
In this section, we perform numerical tests to verify our main theoretical result \Cref{maintheorem}. The implementation is done in FreeFEM++ \cite{freefem} as a Crank-Nicholson scheme in time, which is equivalent to the lowest order time discretization considered here. Every linear system is solved using a direct solver. The problem \eqref{eqs:biotallardADE} is considered on $\Om=(0,1)^{2}$ and $I=(0,0.1]$ for the prescribed solution 
\begin{equation}
    \Psi= \bmu = \phi(x,t)\textbf{I},\quad  p=\phi(x,t), 
\end{equation}
where 
\begin{equation}
    \phi(x,t) = \sin(\pi t)\sin(\pi x)\sin(\pi y),
\end{equation}
with homogeneous Dirichlet boundary conditions.
We consider an isotropic and homogeneous medium, where every parameter is set equal to 1, except for the Poisson ratio $\nu=0.49$.   We choose the polynomial degree $k=1$ and $r=1$ such that the solutions $\bmu_{\t,h},\bmv_{\t,h},\Psi_{\t,h}\in (X_{\t}^{1}(V_{h}^{1}))^{2}$ and $p_{\t,h}\in X_{\t}^{1}(V_{h}^{1})$ are obtained. In addition, we also consider polynomials of degree $k=1$ and $r=2$. The errors and order of convergence in a succession of refined mesh and time step sizes are displayed in \Cref{table:equalorderOne} and \Cref{table:equalorderTwo}. The results confirm our estimate \eqref{eq:mainestimate}. For both $r=1$ and $r=2$, we see the expected order of convergence for $\del(\bmu-\bmu_{\t,h})$. However, when $r=2$, the convergence of the other variables $\bmv_{\t,h},p_{\t,h},\Psi_{\t,h}$ is limited by the temporal discretization.

In the limit case of vanishing coefficients, when $c_0\to 0, \calA\to 0$, a Stokes-type structure is obtained, and therefore, the robustness of the equal-order scheme can not be expected due to the known stability issues of these systems. Consequently, we also consider inf-sup stable elements $\{\bm{V}^{r+1}_h,V^{r}_h\}$ leading to discrete solutions $(\bmu_{\t,h},\bmv_{\t,h},\Psi_{\t,h})\in (X_{\t}^{1}(V_{h}^{r+1}))^{2}$ and $p_{\t,h}\in X_{\t}^{r}(V_{h}^{1})$ of \eqref{eqs:biotallardADE}. When $r=1$, we see in \Cref{table:error} that we have second-order convergence. This strengthens our conjecture in the introduction that the inf-sup stable approximation converges with order $r+1$ in space.

\begin{table}[H]
\centering
\begin{tabular}{|c|c|c|c|c|c|c|c|c|}
\hline
$\t,\, h$ & $\|\del(\bmu-\bmu_{\t,h})\|$ & EOC & $\|\bmv-\bmv_{\t,h}\|$ & EOC & $\|p-p_{\t,h}\|$  & EOC & $\|\Psi-\Psi_{\t,h}\|$  & EOC \\
\hline
$\t_0/2^{0},\, h_0/2^{0}$ 
& 0.368595 & 	- & 	0.347096 & 	- & 	0.0208407 & 	- & 	0.0880246 & 	- \\
\hline
$\t_0/2^{1},\, h_0/2^{1}$ 
& 0.189033 & 	0.96 & 	0.109594 & 	1.66 & 	0.00500241 & 	2.06 & 	0.0409015 & 	1.11\\
\hline
$\t_0/2^{2},\, h_0/2^{2}$ 
& 0.0951114 & 	0.99 & 	0.0285632 & 	1.94 & 	0.00124605 & 	2.01 & 	0.0108676 & 	1.91\\
\hline
$\t_0/2^{3},\, h_0/2^{3}$ 
& 0.0476298 & 	0.99 & 	0.00718861 & 	1.99 & 	0.000311413 & 	2.00 & 	0.00273627 & 	1.99\\
\hline
$\t_0/2^{4},\, h_0/2^{4}$ 
& 0.0238241 & 	0.99 & 	0.00179916 & 	1.99 & 	7.78498e-05 & 	2.00 & 	0.000684646 & 	1.99\\
\hline
\end{tabular}
\captionsetup{font=footnotesize}
\caption{Error and estimated order of convergence at $t=0.1$ with $k=1$ and $r=1$ and in the norm $\|\cdot\|_{L^{2}}$ where $\t_0=0.05, h_0=1/(2\sqrt{2})$.}
\label{table:equalorderOne}
\end{table}

\begin{table}[H]
\centering
\begin{tabular}{|c|c|c|c|c|c|c|c|c|}
\hline
$\t,\, h$ & $\|\del(\bmu-\bmu_{\t,h})\|$ & EOC & $\|\bmv-\bmv_{\t,h}\|$ & EOC & $\|p-p_{\t,h}\|$  & EOC & $\|\Psi-\Psi_{\t,h}\|$  & EOC \\
\hline
$\t_0/2^{0},\, h_0/2^{0}$ 
& 0.0611693 & 	- & 	0.0167413 & 	- & 	0.0174873 & 	- & 	0.0117433 & 	-\\
\hline
$\t_0/2^{1},\, h_0/2^{1}$ 
& 0.0150809 & 	2.02 & 	0.00203678 & 	3.04 & 	0.00394184 & 	2.15 & 	0.00433131 & 	1.44\\
\hline
$\t_0/2^{2},\, h_0/2^{2}$ 
& 0.00373136 & 	2.01 & 	0.000343523 & 	2.57 & 	0.000958246 & 	2.04 & 	0.00112298 & 	1.95\\
\hline
$\t_0/2^{3},\, h_0/2^{3}$ 
& 0.000926141 & 	2.01 & 	7.40018e-05 & 	2.21 & 	0.000239342 & 	2.00 & 	0.000279207 & 	2.01\\
\hline
$\t_0/2^{4},\, h_0/2^{4}$ 
& 0.000230907 & 	2.00 & 	1.7712e-05 & 	2.06 & 	5.98716e-05 & 	1.99 & 	6.96226e-05 & 	2.00\\
\hline
\end{tabular}
\captionsetup{font=footnotesize}
\caption{Error and estimated order of convergence at $t=0.1$ with $k=1$ and $r=2$ and in the norm $\|\cdot\|_{L^{2}}$ where $\t_0=0.05, h_0=1/(2\sqrt{2})$.}
\label{table:equalorderTwo}
\end{table}

\begin{table}[H]
\centering
\begin{tabular}{|c|c|c|c|c|c|c|c|c|}
\hline
$\t,\, h$ & $\|\del(\bmu-\bmu_{\t,h})\|$ & EOC & $\|\bmv-\bmv_{\t,h}\|$ & EOC & $\|p-p_{\t,h}\|$  & EOC & $\|\Psi-\Psi_{\t,h}\|$  & EOC \\
\hline
$\t_0/2^{0},\, h_0/2^{0}$ & $0.061166214$ &$-$& $0.0171243278
$ & $-$ & $0.074389675
$ & $-$ & $0.027374316$& $-$ \\
\hline
$\t_0/2^{1},\, h_0/2^{1}$ & $0.015077722$&$2.02
$&$0.0020023050$ & $3.10$ & $0.022741004$ & $1.71$ & $0.009473981$& $1.53$\\
\hline
$\t_0/2^{2},\, h_0/2^{2}$ & $0.003731044$ & $2.01$& $0.0002900425
$ & $2.78
$ & $0.005903950$ & $1.95
$& $0.002694556$& $1.81$\\
\hline
$\t_0/2^{3},\, h_0/2^{3}$ & $0.000926131$& $2.01$&$5.43490e-05$ & $2.41$ & $0.001484622
$ & $1.99$ & $0.000697825$ & $1.95$\\
\hline
$\t_0/2^{4},\, h_0/2^{4}$ & $0.000230910$& $2.00$&$1.22563e-05$ & $2.15$ & $0.000370841
$ & $2.00$ & $0.000176559$ & $1.98$\\
\hline
\end{tabular}
\captionsetup{font=footnotesize}
\caption{Error and estimated order of convergence at $t=0.1$ for the inf-sup stable pair of Taylor-Hood finite elements spaces $(\bmu_{\t,h},\bmv_{\t,h},p_{\t,h},\Psi_{\t,h})\in\{\bm{V}_h^{2},\bm{V}_h^{2},V^{1}_h,\bm{V}_h^{2}\}$ for the spatial approximation and polynomial degree $k=1$ for the temporal approximation in the norm $\|\cdot\|_{L^{2}}$ where $\t_0=0.05, h_0=1/(2\sqrt{2})$.}
\label{table:error}
\end{table}

 \section*{Acknowledgments}
 JSS and FAR acknowledge the support of the VISTA program, The Norwegian Academy of Science and Letters, and Equinor. 

    
	\bibliographystyle{elsarticle-num} 
	\bibliography{ref}

	
	\appendix

\section{Proof of Lemma \ref{lemma:estimation2} and \ref{lemma:delta1}}\label{appA}
\begin{proof}[Proof of Lemma \ref{lemma:estimation2}]
 The estimate \eqref{eq:lemma49} follows the ideas of \cite[Lem 3.5]{makridakis2005} and \cite[Lem 4.6]{bause2024} with the remark that since $\bmEth^{3}\in (X^{k}_\t(\bm{V}^{r}_h))^{d}\subset C([0,T];\bm{V}^{r}_h)$ we have $\bmEth^{3}(t^{+}_{n-1})=\bmEth^{3}(t_{n-1})$. Note that we have $\bmw_{1}(t_0)=\bmu_0$ and $\bmw_{2}(t_0)=\bmR_h\bmu_1$ from \eqref{eq:special approximation}. Recall the assumption for the initial approximations \eqref{ass:initialapprox}. It then follows that
		\begin{equation}\begin{aligned}
			\norm{\bmE_{\t,h}^{1}(t_0^{+})}_e^{2}&+\frac12\left\|\bm{N}^{\frac12}\begin{pmatrix}
					\bmE^{2}_{\t,h}(t_0^{+})\\\bmEth^{3}(t_0^{+})
				\end{pmatrix}\right\|_{\bm{L}^{2}}^{2}+\frac{c_0}{2}\|e_{\t,h}(t_0^{+})\|_{L^{2}}^{2}	\\
			&\leq c\|\del(\bmR_h\bmu_{0,h}-\bmu_{0,h})\|_{\bm{L}^{2}}^{2}
			+c\left\|\bm{N}^{\frac12}\begin{pmatrix}
					\bmR_h\bmu_{1}-\bmv_{0,h}\\\bmR_h\Psi_{0}-\Psi_{0,h}
				\end{pmatrix}\right\|_{\bm{L}^{2}}^{2}+c\|R_hp_0-p_{0,h}\|_{L^{2}}^{2}
			\leq ch^{2r}.
		\end{aligned}
  \end{equation}
	\end{proof}
    \begin{proof}[Proof of Lemma \ref{lemma:delta1}]
		By the continuity of $\bmu_{\t,h}$, the stability property of $I_{\t}$, the error splitting \eqref{eqs:errorsplit}, and the estimate \eqref{eq:ellipticprojectionerrorVector} and \eqref{eq:special approximation}
		\begin{equation*}
			\begin{aligned}
				\d_{1,n-1}^{+}&=\langle A_h(I_{\t}\bmu\left(t_{n-1}\right)-\bmu\left(t_{n-1}\right)),\bmR_h\bmu\left(t_{n-1}\right)-\bmu_{\t,h}\left(t_{n-1}\right)\rangle\\
				&=\langle A_h(I_{\t}\bmu\left(t_{n-1}\right)-\bmu\left(t_{n-1}\right)),\bmw_1\left(t_{n-1}\right)-\bmu_{\t,h}\left(t_{n-1}\right)\rangle+\langle A_h(I_{\t}\bmu\left(t_{n-1}\right)-\bmu\left(t_{n-1}\right)),\bmR_h\bmu\left(t_{n-1}\right)-\bmw_1\left(t_{n-1}\right)\rangle\\
				&=\d_{1,n-1}+\langle A_h(I_{\t}\bmu\left(t_{n-1}\right)-\bmu\left(t_{n-1}\right)),\bmR_h\bmu\left(t_{n-1}\right)-\int_{t_{n-2}}^{t_{n-1}}I_\t(\bmR_h\p_t\bmu)dt-\bmR_h\bmu(t_{n-2})\rangle\\
				&=\d_{1,n-1}+\langle A_h(I_{\t}\bmu\left(t_{n-1}\right)-\bmu\left(t_{n-1}\right)),\int_{t_{n-2}}^{t_{n-1}}\bmR_h(\p_t\bmu-I_\t(\p_t\bmu))dt\rangle:= \d_{1,n-1}+r_{n-1}.
			\end{aligned}
		\end{equation*}
		Therefore, we have 
		\begin{equation*}
			\d_{1,n}-\d_{1,n-1}^{+}=\d_{1,n}-\d_{1,n-1}-r_{n-1},
		\end{equation*}
		and obtain
		\begin{equation*}
			\begin{aligned}
				|r_{n-1}|=\left|\langle A_h(I_{\t}\bmu\left(t_{n-1}\right)-\bmu\left(t_{n-1}\right)),\int_{t_{n-2}}^{t_{n-1}}\bmR_h(\p_t\bmu-I_\t(\p_t\bmu))dt\rangle\right|\leq c\t_{n-1}\t_n^{2(k+1)}\left((\varepsilon^{n-1,4}_{t})^{2}+(\varepsilon_t^{n-1,3})^{2}\right).
			\end{aligned}
		\end{equation*}
  For $n=1$, by \eqref{eqs:errorsplit}, \eqref{eq:special approximation} and \eqref{eq:lagrangestability}, with assumption \eqref{ass:initialapprox}
  \begin{equation}
      \begin{aligned}
          \d_{1,0}^{+}&=\langle A_h(I_{\t}\bmu_0-\bmu_0),\bmR_h\bmu_0-\bmu_{0,h}\rangle\leq c\t^{2(k+1)}\|\p_t^{k+1}\bmu_0\|^{2}_{1}+ch^{2(r+1)}\|\bmu_0\|_{r+1}^{2}.
      \end{aligned}
  \end{equation}
  Further, by the inequalities of Cauchy–Schwarz and Young, we obtain
  \begin{equation}\label{eq:delta11}
      \begin{aligned}
          \d_{1,1}&=\langle A_h(I_{\t}\bmu(t_1)-\bmu(t_1)),\bmEth^{1}(t_1)\rangle\leq c\t^{2(k+1)}\|\p_t^{k+1}\bmu(t_1)\|^{2}_{1}+\e\|\del\bmEth^{1}(t_1)\|^{2}.
      \end{aligned}
  \end{equation}
  Then, by the triangle inequality, we get \eqref{eq:deltainitial1}.    
    \end{proof}

\end{document}